\documentclass[11pt]{article}

\usepackage[compress]{natbib}
\usepackage{amsmath,amssymb,amsthm,amsfonts}
\usepackage[margin=1in]{geometry}
\usepackage{setspace}
\usepackage{subcaption}
\usepackage{bm}
\usepackage[dvipsnames]{xcolor}
\usepackage{float}
\usepackage[ruled,linesnumbered]{algorithm2e}
\usepackage{hyperref}
\hypersetup{
  colorlinks=true,
  citecolor=MidnightBlue,
  linkcolor=red!80!black,
  urlcolor=MidnightBlue,
  hypertexnames=false
}
\usepackage{caption}
\usepackage{url}
\usepackage{booktabs}
\usepackage{array}
\usepackage{nicefrac}
\usepackage{graphicx}
\usepackage{thm-restate}
\usepackage[export]{adjustbox}

\usepackage[T1]{fontenc}
\usepackage{microtype}
\usepackage{mathpazo}
\usepackage[scaled]{helvet}
\usepackage{courier}
\normalfont

\usepackage{threeparttable}
\allowdisplaybreaks[4]
\usepackage[shortlabels]{enumitem}

\usepackage{tikz}
\usetikzlibrary{fit,arrows.meta,positioning}
\tikzset{
  highlight/.style={
    rectangle,
    blend mode=multiply,
    draw=blue!90!black,
    thick,
    rounded corners=0.3mm,
    inner sep=0.5pt
  }
}

\usepackage{tcolorbox}

\usepackage{mathtools}
\usepackage{pifont}
\usepackage[titletoc]{appendix}
\DeclareMathOperator{\zr}{zr}

\usepackage{thmtools}

\declaretheoremstyle[parent=section]{definitionwithend}

\declaretheorem[style=definitionwithend]{theorem}
\declaretheorem[style=definitionwithend]{proposition}
\declaretheorem[style=definitionwithend]{definition}

\declaretheorem[style=definitionwithend]{remark}

\declaretheorem[style=definitionwithend]{lemma}

\declaretheorem[style=definitionwithend]{condition}

\newcommand{\R}{\mathbb R}
\newcommand{\B}{\mathbb B}
\newcommand{\N}{\mathbb N}

\newcommand{\bz}{\mathbf 0}
\newcommand{\x}{\bm x}

\newcommand{\y}{\bm y}
\newcommand{\z}{\bm z}
\newcommand{\uvec}{\bm u}
\newcommand{\vvec}{\bm v}

\newcommand{\w}{\bm w}

\newcommand{\s}{\bm s}
\newcommand{\omegavec}{\bm\omega}
\newcommand{\ee}{\bm e}
\newcommand{\bI}{\mathbf I}
\newcommand{\bA}{\mathbf A}
\newcommand{\bB}{\mathbf B}
\newcommand{\bM}{\mathbf M}
\newcommand{\bU}{\mathbf U}
\newcommand{\bV}{\mathbf V}

\newcommand{\cA}{\mathcal A}

\newcommand{\cF}{\mathcal F}
\newcommand{\cO}{\mathcal O}
\newcommand{\cT}{\mathcal T}
\newcommand{\cX}{\mathcal X}
\newcommand{\cY}{\mathcal Y}
\def\ord{\bm{\pi}}
\newcommand{\sA}{\mathsf A}
\newcommand{\sZ}{\mathsf Z}
\renewcommand{\zr}{\mathrm{zr}}

\newcommand{\fbar}{\bar f}
\newcommand{\supp}{\operatorname{supp}}
\newcommand{\diam}{\operatorname{diam}}
\DeclareMathOperator{\dist}{dist}
\DeclareMathOperator{\proj}{proj}
\newcommand{\Span}{\operatorname{span}}
\newcommand{\prox}{\operatorname{prox}}
\newcommand{\argminop}{\operatorname*{argmin}}

\DeclarePairedDelimiter{\norm}{\lVert}{\rVert}
\DeclarePairedDelimiterX{\ip}[2]{\langle}{\rangle}{#1,#2}

\newcommand{\rev}[1]{#1}
\providecommand{\FOAM}{\textnormal{\textsc{FOAM}}}

\providecommand{\TrackedFOAM}{\textnormal{\textsc{Tracked-FOAM}}}

\title{Optimal Deterministic First-Order Oracle Complexity for
Nonconvex-Concave Minimax Optimization}
\date{\today}
\author{%
Siyu Pan\thanks{%
Sauder School of Business, The University of British Columbia.
Email: \texttt{siyu.pan@sauder.ubc.ca}.}
\qquad
Taoli Zheng\thanks{%
RIKEN AIP.
Email: \texttt{taoli.zheng@riken.jp}.}
\qquad
Jiajin Li\thanks{%
Sauder School of Business, The University of British Columbia.
Email: \texttt{jiajin.li@sauder.ubc.ca}.}
}
\begin{document}
\maketitle

\begin{abstract}
We study the deterministic first-order oracle complexity of smooth
nonconvex-concave minimax optimization over a bounded convex dual domain.
Let $\ell$ denote the joint smoothness constant, $D_{\cY}$ the diameter
of the dual domain, and $\Delta$ the initial  gap.
We prove that every deterministic first-order algorithm requires
$\Omega(\ell^2D_{\cY}\Delta/\epsilon^3)$ oracle queries in the worst case
to find an $\epsilon$-optimization-stationary point whenever
$\epsilon\lesssim\min\{\ell D_{\cY},\sqrt{\ell\Delta}\}$.
We then develop \textsc{Tracked-FOAM}, a first-order method that attains a matching upper bound, removing the logarithmic factor from previous upper bounds.
Together, these results establish the optimal dependence on all problem parameters in the stated regime.
\end{abstract}
\noindent\textbf{Key words.}
nonconvex minimax optimization, oracle complexity, lower bounds, first-order methods, convergence analysis 

\medskip
\noindent\textbf{AMS subject classifications.}
90C26, 90C47, 90C60

\section{Introduction}
\label{sec:introduction}
We study the deterministic first-order oracle complexity of the smooth
nonconvex-concave (NC-C) minimax problem
\begin{equation}
    \min_{\x\in\cX}\max_{\y\in\cY} f(\x;\y),
    \tag{P}\label{eq:prob}
\end{equation}
where $f$ is jointly $\ell$-smooth and concave in $\y$,
$\cX\subseteq\mathbb R^m$ is a nonempty closed convex set,
and $\cY\subseteq\mathbb R^n$ is a nonempty compact convex set
with diameter $D_{\cY}$.
Let $\Phi(\x):=\max_{\y\in\cY}f(\x;\y)+\iota_{\cX}(\x)$,
where $\iota_{\cX}$ is the indicator function of $\cX$,
and assume that $\Phi(\x^0)-\inf\Phi\le\Delta$.
Since $\Phi$ can be nonsmooth, we seek an
$\epsilon$-optimization-stationary point ($\epsilon$-OS),
namely a point $\x\in\cX$ satisfying
$\|\nabla\Phi_{1/(2\ell)}(\x)\|\le\epsilon$,
where $\Phi_{1/(2\ell)}$ denotes the Moreau envelope of $\Phi$
\citep{jin2020local,lin2020near,li2025nonsmooth,li2026smoothing}.

We ask how many first-order oracle queries are necessary and
sufficient to find an $\epsilon$-OS point.
Existing first-order approaches include proximal-point methods
\citep{rafique2022weakly,thekumparampil2019efficient,lin2020near},
two-timescale gradient descent-ascent (GDA) \citep{lin2020gradient},
and smoothing methods
\citep{zhang2020single,zhao2024primal,li2025nonsmooth,zheng2023universal}.
With the remaining problem parameters suppressed, the best known
upper bounds scale as $\epsilon^{-3}$ up to logarithmic factors.
In particular, \textsc{Minimax-PPA} of \citet{lin2020near}
achieves an $O(\epsilon^{-3}\log^2(1/\epsilon))$ bound, while
\textsc{Perturbed Smoothed FOAM} of \citet{li2026smoothing}
improves it to $O(\epsilon^{-3}\log(1/\epsilon))$.
These guarantees leave open whether the $\epsilon^{-3}$ dependence
is unavoidable for arbitrary deterministic first-order methods
in the worst case and whether the remaining logarithmic factor
can be removed.

We answer both questions by establishing the optimal deterministic
first-order oracle complexity
\[
    \Theta\!\left(\frac{\ell^2D_{\cY}\Delta}{\epsilon^3}\right)
    \qquad\text{whenever}\qquad
    \epsilon\lesssim \min\!\left\{
        \ell D_{\cY},\sqrt{\ell\Delta}
    \right\}.
\]
The lower bound holds for arbitrary deterministic first-order
algorithms. 
We attain the matching upper bound with \textsc{Tracked-FOAM},
a first-order method that removes the logarithmic factor
from previous upper bounds.
Thus, the two bounds match in their dependence on
$\ell$, $D_{\cY}$, $\Delta$, and $\epsilon$
throughout the stated regime.

The lower bound must account for the restriction imposed by
a bounded dual domain.
For nonconvex-strongly-concave (NC-SC) minimax problems with
strong concavity parameter $\mu$ and condition number
$\kappa:=\ell/\mu$, lower bounds of
$\Omega(\ell\Delta\sqrt{\kappa}/\epsilon^2)$
are known for zero-respecting first-order algorithms \citep{li2021lower} and linear-span first-order methods \citep{zhang2021complexity}.
Substituting $\kappa\asymp\ell^2D_{\cY}^2/\epsilon^2$
formally gives the desired NC-C rate.
However, those constructions use unbounded dual domains.
Restricting the dual variable to a prescribed ball can alter
the maximized function and invalidate the gradient lower bound.
A separate issue arises from the stationarity criterion:
a lower bound on the gradient norm of the value function at a query point 
does not control the gradient norm of its Moreau envelope, which depends
on a proximal point that need not have been queried.

% We first prove that every deterministic first-order method requires $\Omega(\ell^2D_{\cY}\Delta/\epsilon^3)$ oracle queries in the worst case to find an $\epsilon$-OS point whenever $\epsilon\lesssim\min\{\ell D_{\cY},\sqrt{\ell\Delta}\}$. This establishes the optimal polynomial dependence on $\epsilon$. The intuition for this rate comes from known lower bounds for nonconvex-strongly-concave (NC-SC) minimax optimization. Specifically, let $\mu$ denote the strong concavity parameter and $\kappa:=\ell/\mu$ the condition number. Lower bounds of $\Omega(\ell\Delta\sqrt{\kappa}/\epsilon^2)$ are known for zero-respecting first-order algorithms \citep{li2021lower} and linear-span first-order methods \citep{zhang2021complexity}. The formal substitution $\kappa\asymp\ell^2D_{\cY}^2/\epsilon^2$ suggests the NC-C lower bound above.

We adapt the construction of \citet{li2021lower} by modifying its inner dual chain and introducing a new outer construction. The modified inner chain preserves the zero-chain property, has a smoothness constant independent of its length, and admits an explicit bound on its unconstrained dual maximizer. We replace the standard nonconvex outer chain of \citet{carmon2019lower} with outer terms that keep the primal inputs to the inner chains bounded at points in our analysis where the gradient of the value function is small. This ensures that the unconstrained dual maximizer lies inside the prescribed dual ball. 
The same outer terms also let us extend the lower bound on the gradient norm from the value function to its Moreau envelope. Finally, we adapt the finite-horizon resisting-oracle framework of \citet[Lemma~7]{carmon2019lower} to our setting to extend the lower bound from zero-respecting  first-order
algorithms to arbitrary deterministic first-order methods.

% However, this substitution does not directly establish the desired lower bound. Existing NC-SC constructions have unbounded dual domains, and imposing a prescribed dual diameter can change the value function on which their hardness relies. The stationarity criterion also differs: although our hard instance remains NC-SC with a smooth value function, the required lower bound concerns the gradient of the Moreau envelope. A gradient lower bound for the value function does not automatically provide this guarantee.

We next establish a matching upper bound without logarithmic factors.
The framework of \citet{li2026smoothing} combines dual perturbation
and primal smoothing to generate a sequence of smooth
strongly-convex-strongly-concave (SC-SC) subproblems, each solved
by the accelerated \textsc{FOAM} method of \citet{kovalev2022first}.
Solving each subproblem to a prescribed accuracy depending on
$\epsilon$ incurs a logarithmic cost at every outer iteration,
introducing the remaining logarithmic factor in the overall
oracle complexity.

Our \TrackedFOAM{} method retains the full accelerated \FOAM{}
state across successive subproblems.
We establish a global Lyapunov descent estimate that jointly
controls the gradient norm of the Moreau envelope and the error
from approximately solving the SC-SC subproblems with \textsc{FOAM}. This estimate shows that a constant-factor contraction of the \textsc{FOAM} Lyapunov function per outer iteration suffices; the inner solver is never run to an 
$\epsilon$-dependent accuracy, and the logarithmic factor disappears.
% This estimate shows that  a fixed-contraction
% \textsc{FOAM} block suffices at each outer iteration, eliminating
% the need to solve each subproblem to a separately prescribed
% absolute accuracy. \Siyu{3}
% \Taoli{
% Our \TrackedFOAM{} method instead retains the full accelerated
% \FOAM{} state across successive subproblems and tracks the inner error
% as the proximal center changes. The analysis couples this tracking
% error with the outer Moreau-envelope descent through a global Lyapunov
% function. This joint descent shows that a fixed-contraction \FOAM{}
% block suffices at each outer iteration, eliminating the need to solve
% each subproblem to a separately prescribed accuracy. Moreoverm, a geometric
% initialization reaches the required dual regularization scale without
% introducing an additional logarithmic factor.}
Consequently, for every $\epsilon>0$, \TrackedFOAM{} finds
an $\epsilon$-OS point within
\[
  \cO\left(
    \left(\frac{\ell\Delta}{\epsilon^2}+1\right)
    \max\left\{1,\frac{\ell D_{\cY}}{\epsilon}\right\}
  \right)
\]
first-order oracle queries.

\paragraph{Notation.}
Bold lowercase symbols denote vectors, while their scalar coordinates are not
bold.  The semicolon in $f(\x;\y)$ separates the minimization and
maximization variables.  We use $\norm{\cdot}$ for the Euclidean norm and its
induced operator norm.  For $d\in\N$ and $D>0$, let
$\B_D^d:=\{\z\in\R^d\mid\norm{\z}\le D/2\}$
denote the centered Euclidean ball of diameter $D$.
We write $[T]:=\{1,\ldots,T\}$,
$[0]:=\varnothing$, and $\N_0:=\N\cup\{0\}$.  For $d'\ge d$, let
$\operatorname{O}(d',d):=
\{\bU\in\R^{d'\times d}\mid\bU^\top\bU=\bI_d\}$ be the set of
matrices with orthonormal columns.  The zero vector is written as $\bz$, with its
dimension omitted when unambiguous. We use $\dist(\x, \mathcal{S}):=\inf_{\z\in\mathcal{S}}\norm{\x-\z}$ to denote the distance from $\x$ to the set $\mathcal{S}$. A coordinate ordering is a permutation
$\ord=(\pi_1,\ldots,\pi_d)$ of $[d]$. 
For example, the permutation $\ord=(2,3,1)$ orders the coordinates of
$\z=(z_1,z_2,z_3)$ as $(z_2,z_3,z_1)$.
For any $\z\in\R^d$, define its support under $\ord$ by
$
    \supp_{\ord}(\z)
    :=
    \{k\in[d]\mid z_{\pi_k}\ne0\}.
$
Under the identity permutation
$\ord=(1,\ldots,d)$, we simply write
$
    \supp(\z):=\{k\in[d]\mid z_k\ne0\}.
$
Whenever a coordinate ordering is involved, we identify the pair
$(\x,\y)$ with its canonical concatenation, as in
$\supp(\x,\y)$ and $\supp_{\ord}(\x,\y)$.

\paragraph{Organization.}
Section~\ref{sec:preliminaries} introduces the function class,
stationarity criterion, and oracle model, and
Section~\ref{subsec:main-results} states our main results.
Section~\ref{sec:construction} constructs the hard instance,
and Section~\ref{sec:upper} presents \TrackedFOAM{}.
Section~\ref{sec:verification} verifies the properties of the
hard instance and proves the lower bounds.
Section~\ref{sec:upper-proof} establishes the matching upper bound.
Appendix~\ref{app:components} contains technical lemmas and
proofs used in the lower bound analysis.
Appendix~\ref{app:foam-details} describes the implementation
of \FOAM{} and proves its contraction and initialization guarantees.
\section{Preliminaries}
\label{sec:preliminaries}
To study Problem~\eqref{eq:prob}, we introduce the following class of NC-C functions.
\begin{definition}[NC-C function class]
\label{def:function-class}
 Given $\ell,D_{\cY},\Delta>0$, let
$\cF(\ell,D_{\cY},\Delta)$ be the union, over all $m,n\in\N$ and all
nonempty closed convex sets $\cX\subseteq\R^m$ and $\cY\subseteq\R^n$ satisfying
\(
  (\bz_m,\bz_n)\in\cX\times\cY
  \mbox{ and }
  \diam(\cY)\le D_{\cY},
\)
of the class of functions $f:\cX\times\cY\to\R$ satisfying the
following properties. For each
 such function, define $\Phi(\x):=\max_{\y\in \cY}f(\x;\y)+\iota_{\cX}(\x)$.
\begin{enumerate}[(i)]
    \item {($\ell$-smoothness)} $f$ is jointly $\ell$-smooth on
    $\cX\times\cY$, i.e., for all
    $(\x_1,\y_1),(\x_2,\y_2)\in \cX\times\cY$,
    \[
        \norm*{\nabla f(\x_1;\y_1)-\nabla f(\x_2;\y_2)}
        \le \ell\norm*{(\x_1,\y_1)-(\x_2,\y_2)}.
    \]
    \item {(Dual concavity)} For every fixed $\x\in \cX$,
    $f(\x;\cdot)$ is concave on $\cY$.
    \item {(Initial gap)} The initialization satisfies $
        \Phi(\bz)-\inf_{\x\in \cX}\Phi(\x)\le \Delta.$
\end{enumerate}
\end{definition}
We use the following standard stationarity measure throughout the paper.
\begin{definition}[Optimization-stationary point]
\label{def:stationary}
Given $\epsilon>0$, a point $\x\in\cX$ is an
$\epsilon$-optimization-stationary point ($\epsilon$-OS) of
Problem~\eqref{eq:prob} if
\[
  \norm*{\nabla\Phi_{\frac{1}{2\ell}}(\x)}\le\epsilon,
\]
where $
  \Phi_{1/(2\ell)}(\x)
  :=
  \min_{\z\in\R^m}
  \{
    \Phi(\z)+\ell\norm{\x-\z}^2
  \}
$
is the Moreau envelope of $\Phi$ with parameter
$1/(2\ell)$.
\end{definition}

% For every fixed $\y\in\cY$, the function $f(\cdot;\y)+\iota_{\cX}(\cdot)$ is
% $\ell$-weakly convex. Consequently, $\Phi$ is $\ell$-weakly convex. The following
% standard identities show that Definition~\ref{def:stationary} is
% well defined.

% \begin{lemma}[Moreau envelope identities; c.f. Lemma~2.2, \citep{davis2019stochastic}]
% \label{lem:moreau-identities}
% Let $\Phi:\R^m\to\R\cup\{+\infty\}$ be proper, lower semicontinuous,
% and $\ell$-weakly convex.  For every $\x\in\R^m$, define
% \[
%   \prox_{\frac{\Phi}{2\ell}}(\x)
%   :=
%   \argminop_{\z\in\R^m}
%   \left\{
%     \Phi(\z)+\ell\norm*{\x-\z}^2
%   \right\}.
% \]
% Then $\uvec$ is unique, the envelope
% $\Phi_{\frac{1}{2\ell}}$ is continuously differentiable, and
% \[
%   \nabla\Phi_{\frac{1}{2\ell}}(\x)
%   =
%   2\ell(\x-\uvec)
%   \in\partial\Phi(\uvec).
% \]
% \end{lemma}
We next introduce the first-order saddle oracle.
\begin{definition}[First-order saddle oracle]
\label{def:first-order-oracle}
For a differentiable function $f:\cX\times\cY\to\R$, the first-order
saddle oracle at $(\x,\y)\in\cX\times\cY$ returns
\[
        \mathbb O_f(\x;\y)
        :=
        \bigl(
        f(\x;\y),
        \nabla_{\x} f(\x;\y),
        \nabla_{\y} f(\x;\y)
        \bigr).
\]
\end{definition}

For a first-order algorithm $\sA$ and a differentiable saddle function
$f:\cX\times\cY\to\R$, let $\sA_{\x}^{(t)}[f]\in\cX$ and
$\sA_{\y}^{(t)}[f]\in\cY$ denote, respectively, the primal and dual
components of its $t$-th feasible oracle query.
We first consider zero-respecting first-order algorithms that access $f$ through the oracle
$\mathbb O_f$.

\begin{definition}[Zero-respecting first-order algorithm]
\label{def:zero-respecting}
A first-order algorithm $\sA$ is zero-respecting if, for each such $f$, it
starts from
$(\sA_{\x}^{(0)}[f],\sA_{\y}^{(0)}[f])=\bz$ and its query sequence
satisfies
\[
\supp\bigl(\sA_{\x}^{(t)}[f],\sA_{\y}^{(t)}[f]\bigr)
 \subseteq
 \bigcup_{s<t}
 \supp\Bigl(
     \nabla_{\x}f\bigl(\sA_{\x}^{(s)}[f];\sA_{\y}^{(s)}[f]\bigr),
     \nabla_{\y}f\bigl(\sA_{\x}^{(s)}[f];\sA_{\y}^{(s)}[f]\bigr)
 \Bigr),
 \qquad \forall t\geq1.
\]
We denote the class of such algorithms by $\cA_{\zr}$.
\end{definition}

We next define $\cA_{\det}$, the class of deterministic first-order algorithms.
\begin{definition}[Deterministic first-order algorithm]
\label{def:deterministic-algorithm}
A first-order algorithm $\sA$ is deterministic if there exists a sequence
of maps $\{\Gamma_t\}_{t\geq1}$ such that, for every such $f$, it
starts from $(\sA_{\x}^{(0)}[f],\sA_{\y}^{(0)}[f])=\bz$ and its query
sequence satisfies
\[
 \bigl(\sA_{\x}^{(t)}[f],\sA_{\y}^{(t)}[f]\bigr)
 =
 \Gamma_{t}
 \left(
   \Bigl(
       \mathbb O_f\bigl(\sA_{\x}^{(s)}[f];\sA_{\y}^{(s)}[f]\bigr)
   \Bigr)_{s<t}
 \right),\qquad \forall t\geq1.
\]
We denote the class of such algorithms by $\cA_{\det}$.
\end{definition}

We measure the number of oracle queries in the worst case as follows.
\begin{definition}[Oracle Complexity]
\label{def:complexity-algorithm-class}
Fix $\epsilon>0$. Let $\cA\subseteq\cA_{\det}$ be any nonempty subclass of deterministic first-order algorithms, and let $\cF$ be a nonempty class of such saddle functions. The oracle complexity of $\cA$ over $\cF$ is defined by
\[
        \cT_\epsilon(\cA,\cF)
        :=
        \inf_{\sA\in\cA}
        \sup_{f\in\cF}
         \inf
        \left\{
        t\in\N_0\;\middle|\;
        \sA_{\x}^{(t)}[f]\;
        {\text{is an }\epsilon\text{-OS for }f}
        \right\}.
\]
\end{definition}
To construct the hard instance, we next extend the standard
zero-chain structure to the joint primal-dual variable.
\begin{definition}[First-order saddle zero-chain]
\label{def:saddle-zero-chain}
Let $f:\cX\times\cY\subseteq\R^m\times\R^n\to\R$ be differentiable, and let
$\ord=(\pi_1,\ldots,\pi_{m+n})$ be a coordinate ordering of
$(\x,\y)$. We say that $f$
is a first-order saddle zero-chain with respect to 
$\ord$ if, for every
$i\in[m+n]$ and every $(\x,\y)\in\cX\times\cY$,
\[
    \supp_{\ord}(\x,\y)\subseteq[i-1]
    \quad\Longrightarrow\quad
    \supp_{\ord}\bigl(
        \nabla_{\x}f(\x;\y),
        \nabla_{\y}f(\x;\y)
    \bigr)
    \subseteq[i].
\]
\end{definition}
The next two lemmas will be used to extend the lower bound
to arbitrary deterministic algorithms.  Their proofs are given
in Appendices~\ref{app:rotation-proof} and~\ref{app:resisting-proof}, respectively.
For $D_{\cY}>0$, a smooth function
$f:\R^m\times\B_{D_{\cY}}^n\to\R$, and matrices
$\bU\in\operatorname O(m',m)$ and $\bV\in\operatorname O(n',n)$,
define the rotated function
$f_{\bU,\bV}:\R^{m'}\times\B_{D_{\cY}}^{n'}\to\R$ by
\begin{equation*}
  f_{\bU,\bV}(\x;\y):=f(\bU^\top\x;\bV^\top\y).
\end{equation*}
Let
$\Phi_{\bU,\bV}(\x):=\max_{\y\in\B_{D_{\cY}}^{n'}}f_{\bU,\bV}(\x;\y)$
denote its value function.

\begin{lemma}[Rotation invariance]
\label{lem:rotation-covariance}
Let $f\in\cF(\ell,D_{\cY},\Delta)$ be defined on
$\R^m\times\B_{D_{\cY}}^n$, and let
$\bU\in\operatorname O(m',m)$ and $\bV\in\operatorname O(n',n)$.
Then $f_{\bU,\bV}\in\cF(\ell,D_{\cY},\Delta)$.
Moreover, for every $\x\in\R^{m'}$,
\begin{equation}
  \norm*{\nabla\bigl[\Phi_{\bU,\bV}\bigr]_{\frac{1}{2\ell}}(\x)}
  =\norm*{\nabla\Phi_{\frac{1}{2\ell}}(\bU^\top\x)}.
  \label{eq:rotated-os}
\end{equation}
\end{lemma}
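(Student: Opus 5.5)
We would prove the lemma in two stages: first check that $f_{\bU,\bV}$ belongs to $\cF(\ell,D_{\cY},\Delta)$, then relate the two value functions and their Moreau envelopes. The key structural fact is that $\bV^\top$ maps $\B_{D_{\cY}}^{n'}$ onto $\B_{D_{\cY}}^{n}$. Indeed, $\norm{\bV^\top\y}\le\norm{\y}$ because $\norm{\bV^\top}\le 1$. Conversely, any $\y'\in\B_{D_{\cY}}^n$ equals $\bV^\top(\bV\y')$, and $\norm{\bV\y'}=\norm{\y'}$. Hence
\[
\Phi_{\bU,\bV}(\x)=\max_{\y'\in\B^n_{D_{\cY}}}f(\bU^\top\x;\y')=\Phi(\bU^\top\x)
\]
for all $\x\in\R^{m'}$. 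The identity holds with $\cX=\R^{m'}$ and $\cX=\R^m$, so no indicator terms appear.

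For membership in the class, the domains are fine: $\R^{m'}$ is closed and convex, $\B^{n'}_{D_{\cY}}$ is compact and convex with diameter exactly $D_{\cY}$, and both contain the origin. For smoothness, the chain rule gives $\nabla f_{\bU,\bV}(\x;\y)=\operatorname{diag}(\bU,\bV)\nabla f(\bU^\top\x;\bV^\top\y)$. Since $\operatorname{diag}(\bU,\bV)$ is an isometry and $\norm{\operatorname{diag}(\bU^\top,\bV^\top)}\le1$, the $\ell$-Lipschitz bound transfers directly. Concavity in $\y$ is preserved under composition with the linear map $\bV^\top$. For the initial gap, the identity above gives $\Phi_{\bU,\bV}(\bz)=\Phi(\bz)$. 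Surjectivity of $\bU^\top$ onto $\R^m$ gives $\inf\Phi_{\bU,\bV}=\inf\Phi$, so the gap is unchanged.

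For the Moreau envelope identity, decompose any $\z\in\R^{m'}$ as $\z=\bU\bU^\top\z+\z_\perp$ with $\bU^\top\z_\perp=\bz$. Then
\[
\Phi(\bU^\top\z)+\ell\norm{\x-\z}^2=\Phi(\bU^\top\z)+\ell\norm{\bU^\top\x-\bU^\top\z}^2+\ell\norm{(\bI-\bU\bU^\top)(\x-\z)}^2 .
\]
Set $\z_\perp=(\bI-\bU\bU^\top)\x$ to make the last term vanish, and let $\bU^\top\z$ range over all of $\R^m$. This gives $[\Phi_{\bU,\bV}]_{1/(2\ell)}(\x)=\Phi_{1/(2\ell)}(\bU^\top\x)$.

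Differentiating by the chain rule yields $\nabla[\Phi_{\bU,\bV}]_{1/(2\ell)}(\x)=\bU\nabla\Phi_{1/(2\ell)}(\bU^\top\x)$. Taking norms and using that $\bU$ has orthonormal columns gives \eqref{eq:rotated-os}. Differentiability of the envelope follows from the $\ell$-weak convexity of $\Phi$: each $f(\cdot;\y)$ is $\ell$-smooth, hence $\ell$-weakly convex, and a maximum of such functions remains $\ell$-weakly convex.

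The main care point is the Moreau envelope step. The proximal minimizer must be shown to lie in $\bU\R^m+(\bI-\bU\bU^\top)\x$, and this follows from the orthogonal decomposition above. Everything else is routine.
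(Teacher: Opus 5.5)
Your proposal is correct and follows essentially the same route as the paper. Both arguments identify $\Phi_{\bU,\bV}=\Phi\circ\bU^\top$ because $\bV^\top$ maps the ball onto the ball. Both transfer smoothness, concavity, and the initial gap via contractivity and surjectivity. Both then reduce the envelope by choosing the component of $\z$ orthogonal to the range of $\bU$ to match that of $\x$ (the paper phrases this as minimizing $\norm{\x-\z}^2$ subject to $\bU^\top\z=\uvec$), and differentiate. Your remark that the $\ell$-weak convexity of $\Phi$ guarantees differentiability of the envelope makes explicit a point the paper leaves implicit.
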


\begin{lemma}[Finite-horizon resisting oracle]
\label{lem:resisting-oracle}
For any $T_0\in\N$ and $\sA\in\cA_{\rm det}$, there exists a
zero-respecting deterministic algorithm
$\sZ\in\cA_{\rm zr}\cap\cA_{\rm det}$ such that, for every $D_{\cY}>0$
and every smooth saddle function $f:\R^m\times\B_{D_{\cY}}^n\to\R$,
one can find $\bU\in\operatorname O(m+T_0,m)$ and
$\bV\in\operatorname O(n+T_0,n)$ satisfying
\begin{equation}
  \bigl(\sZ_{\x}^{(t)}[f],\sZ_{\y}^{(t)}[f]\bigr)
  =\bigl(
      \bU^\top\sA_{\x}^{(t)}[f_{\bU,\bV}],
      \bV^\top\sA_{\y}^{(t)}[f_{\bU,\bV}]
    \bigr),
  \qquad 0\le t<T_0.
\label{eq:projected-transcript}
\end{equation}
\end{lemma}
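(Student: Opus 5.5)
The plan is to adapt the construction in \citet[Lemma~7]{carmon2019lower} to the joint primal–dual query sequence. The difference from that setting is that the oracle returns gradients in two blocks, so the hidden basis is rotated separately in each block. I would build $\sZ$ and the pair $(\bU,\bV)$ together, one query at a time, by simulating $\sA$ on a rotated instance whose columns are fixed lazily. Write the columns as $\bU=[\uvec_1,\ldots,\uvec_m]$ and $\bV=[\vvec_1,\ldots,\vvec_n]$. Because $f_{\bU,\bV}(\x;\y)=f(\bU^\top\x;\bV^\top\y)$, the oracle answer at $(\x,\y)$ is
\[
\bigl(f(\bU^\top\x;\bV^\top\y),\ \bU\nabla_{\x}f(\bU^\top\x;\bV^\top\y),\ \bV\nabla_{\y}f(\bU^\top\x;\bV^\top\y)\bigr).
\]
This answer depends on $(\bU,\bV)$ only through the inner products $\ip{\uvec_j}{\x}$ and $\ip{\vvec_k}{\y}$ and through the columns $\uvec_j,\vvec_k$ that multiply nonzero gradient coordinates.

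Step one sets up the induction. After $t$ simulated queries, suppose the following have been committed:
\begin{enumerate}[(a)]
\item index sets $J_t\subseteq[m]$ and $K_t\subseteq[n]$, each of size at most $t$;
\item the columns $\uvec_j$ for $j\in J_t$ and $\vvec_k$ for $k\in K_t$.
\end{enumerate}
The invariant is that the zero-respecting iterates $\z^{(s)}:=\bU^\top\sA_{\x}^{(s)}$ and $\w^{(s)}:=\bV^\top\sA_{\y}^{(s)}$ have supports in $J_t$ and $K_t$, respectively, for $s\le t$. To enforce this, each column with index outside $J_t$ (resp.\ $K_t$) must be orthogonal to all primal (resp.\ dual) queries made so far, and orthogonal to the committed columns.

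For query $t+1$, let $\sA$ propose $(\x^{(t)},\y^{(t)})=\Gamma_t(\cdot)$ based on the simulated transcript. Then set $\z^{(t)}$ equal to the coordinates $\ip{\uvec_j}{\x^{(t)}}$ for $j\in J_t$ and $0$ otherwise, and define $\w^{(t)}$ in the same way. Query $f$ at $(\z^{(t)},\w^{(t)})$. Any coordinates of $\nabla_{\x}f$ or $\nabla_{\y}f$ that are newly nonzero get added to $J$ and $K$. For each such index, choose a fresh unit column orthogonal to all previous primal (resp.\ dual) queries and committed columns. The ambient dimensions $m+T_0$ and $n+T_0$ leave room for this over $T_0$ steps. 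The simulated oracle answer is then reconstructed as $\bU\nabla_{\x}f$, which uses only committed columns, and sent back to $\sA$.

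The uncommitted columns are completed orthonormally at the end. This does not change any transcript entry, because those columns are orthogonal to all queries and multiply zero gradient coordinates. The projections of the queries onto them therefore vanish, which gives \eqref{eq:projected-transcript}.

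Feasibility in the dual ball also has to be checked. By construction, $\bV^\top\y^{(t)}$ is the orthogonal projection of $\y^{(t)}$ onto the span of the committed columns, so $\norm{\w^{(t)}}\le\norm{\y^{(t)}}\le D_{\cY}/2$. Hence $\w^{(t)}\in\B_{D_{\cY}}^n$.

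The main obstacle is making $\sZ$ a single algorithm, independent of $f$, while also keeping it zero-respecting. Its query rule must depend only on the oracle answers from $f$ seen so far. I would fix a deterministic rule for choosing fresh columns, for example Gram–Schmidt applied to standard basis vectors, which is a function of the past transcript only. Then $\sZ$ is a deterministic map from past $f$-oracle outputs, and zero-respecting holds because $\supp(\z^{(t)},\w^{(t)})\subseteq J_t\cup K_t$, which consists of indices revealed by earlier gradients.

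A further subtlety is where the invariant is enforced. It has to hold at the moment the oracle answer is generated, not only after the fact. So I would require, at each step, that new columns be orthogonal to the current query $\x^{(t)}$ and to all earlier ones. Otherwise the gradient evaluated at $\bU^\top\x^{(t)}$ could differ from the one evaluated at $\z^{(t)}$.
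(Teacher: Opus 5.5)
Your construction is the paper's: lazily committed columns, base queries read off the committed coordinates of $\sA$'s virtual query, and fresh columns from a fixed Gram--Schmidt rule orthogonal to all virtual queries so far and to the committed columns. It also uses Bessel's inequality for the dual ball and an orthonormal completion at the end. However, one step is asserted without valid justification: you use $\norm{\y^{(t)}}\le D_{\cY}/2$. The virtual query $(\x^{(t)},\y^{(t)})$ is $\Gamma_t$ applied to a \emph{simulated} transcript. $\sA$ is only guaranteed to make feasible queries on transcripts actually generated by some instance. Appealing to the final $\bU,\bV$ is circular: they are built from the answers of $f$, and obtaining the answer at round $t$ already requires $\w^{(t)}\in\B_{D_{\cY}}^n$.

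The paper closes this with a temporary completion at the start of each round. It extends the current partial families to some $\bU'\in\operatorname O(m+T_0,m)$ and $\bV'\in\operatorname O(n+T_0,n)$ whose unassigned columns are orthogonal to the committed columns and to the earlier virtual queries. This is possible because $(m+T_0)-t-|J_t|\ge m-|J_t|$. Then $(\bU')^\top\x^{(r)}=\z^{(r)}$ and $(\bV')^\top\y^{(r)}=\w^{(r)}$ for all $r<t$, so $f_{\bU',\bV'}$ reproduces every simulated reply. Determinism then makes $(\x^{(t)},\y^{(t)})$ a genuine query of $\sA$ on $f_{\bU',\bV'}$, hence feasible. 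Only after this does your projection argument give $\w^{(t)}\in\B_{D_{\cY}}^n$; the temporary columns are then discarded.

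Separately, your invariant that $J_t$ and $K_t$ have at most $t$ elements is false. The lemma concerns an arbitrary smooth $f$, not a zero-chain, so a single gradient may have full support. Fortunately the invariant is not needed. When new indices are revealed at round $t$, the orthogonal complement of $\x^{(0)},\ldots,\x^{(t)}$ and the committed columns has dimension at least $(m+T_0)-(t+1)-|J_t|\ge m-|J_t|$. This is at least the number of new primal indices, since they all lie in $[m]$ outside $J_t$. The dual side, and the final completion against all $T_0$ virtual queries, are counted the same way. Finally, to make $\sZ$ a genuine element of $\cA_{\rm zr}\cap\cA_{\rm det}$, you should also specify its queries after round $T_0$ and on other domains; the paper simply queries the origin.
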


\section{Main Results}
\label{subsec:main-results}
We present lower bounds for deterministic first-order methods
and a matching upper bound attained by \TrackedFOAM{}.
\subsection{Lower bound results}
We first state the zero-respecting first-order lower bound and then its extension to
arbitrary deterministic first-order methods.  Both results are proved in
Section~\ref{subsec:proof-zr}.
\begin{theorem}[Zero-respecting lower bound]
\label{thm:zr-lower}
There are numerical constants $c_0,\rev{c_1}>0$ such that, for every
$\ell,D_{\cY},\Delta,\epsilon>0$ satisfying $
  \epsilon\le c_0\min\{\ell D_{\cY},\sqrt{\ell\Delta}\},$
there exists an instance $f\in\cF(\ell,D_{\cY},\Delta)$ for which
\[
  \cT_{\epsilon}\bigl(\rev{\cA_{\rm zr}\cap\cA_{\rm det}},\{f\}\bigr)
  \ge \rev{c_1}\frac{\ell^2D_{\cY}\Delta}{\epsilon^3}.
\]
\end{theorem}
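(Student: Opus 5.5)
We will construct an explicit scaled saddle zero-chain and show that every zero-respecting algorithm has its primal iterate stuck at a point with large Moreau-envelope gradient until it has made about $T\asymp \ell^2D_{\cY}\Delta/\epsilon^3$ queries. The construction follows \citet{li2021lower}: an outer nonconvex chain in $\x\in\R^{T}$ coupled to inner dual chains. Each inner chain is a quadratic ``Nesterov-type'' concave chain in $\y$ of length $K$ with strong concavity $\mu$. Its maximizer in $\y$ reproduces a smoothed version of the outer term, so the value function mimics an NC-SC hard instance with $\kappa=\ell/\mu$.

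First, we fix a base (unscaled) function $\bar f(\x;\y)$ on $\R^{T}\times\R^{TK}$ with the following properties.
\begin{enumerate}[(a)]
\item It is a first-order saddle zero-chain (Definition~\ref{def:saddle-zero-chain}) under an ordering that interleaves the $K$ dual coordinates of each block before the next primal coordinate. Revealing one primal coordinate then costs $\Omega(K)$ queries.
\item It is $O(1)$-smooth with a constant independent of $K$.
\item Its unconstrained dual maximizer $\y^\star(\x)$ satisfies $\norm{\y^\star(\x)}\lesssim \sqrt{K}\cdot(\text{size of primal inputs})$, via the geometric decay profile of the inner chain.
\item Its value function has gradient $\gtrsim 1/\sqrt K$ whenever the last primal coordinate is zero. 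This is the NC-SC mechanism with $\sqrt\kappa\asymp K$.
\end{enumerate}
The outer terms will be chosen bounded, e.g.\ of Carmon type but with the inputs to the inner chains passed through a bounded map. This keeps the primal inputs bounded at points where the value gradient is small, which is what property (c) needs.

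Second, we rescale: $f(\x;\y)=\lambda\,\bar f(\x/\sigma;\y/\tau)$, restricted to $\y\in\B_{D_{\cY}}^{n}$. We choose $K\asymp \ell D_{\cY}/\epsilon$ and the scales so that four conditions hold: smoothness is $\ell$, $\Delta\gtrsim\lambda T$, the gradient lower bound becomes $>\epsilon$ (with constant margin), and $\norm{\y^\star}\le D_{\cY}/2$ on the relevant region. The last condition guarantees that the constrained maximum equals the unconstrained one there, so hardness is preserved. Solving for $T$ gives $T\asymp \ell\Delta/\epsilon^2$, and the query count is $TK\asymp \ell^2D_{\cY}\Delta/\epsilon^3$. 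The regime $\epsilon\le c_0\min\{\ell D_{\cY},\sqrt{\ell\Delta}\}$ ensures $K,T\ge1$.

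Third, we pass from $\nabla\Phi$ to $\nabla\Phi_{1/(2\ell)}$. The plan is to use $\nabla\Phi_{1/(2\ell)}(\x)=2\ell(\x-\hat\x)$ with $\nabla\Phi(\hat\x)=2\ell(\x-\hat\x)$, where $\Phi$ is $\ell$-weakly convex and smooth on our instance. Suppose $\norm{\x-\hat\x}\le\epsilon/(2\ell)$. By the zero-respecting property, $\x$ has its last primal coordinate zero, so $\hat\x$ is within $\epsilon/(2\ell)$ of that subspace. The outer terms are designed so that the gradient lower bound is robust: it persists in a neighbourhood of width $\gtrsim\sigma$ around that subspace, and bounded inputs keep $\y^\star(\hat\x)$ inside the ball. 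Hence $\norm{\nabla\Phi(\hat\x)}>\epsilon$, a contradiction. With $\sigma\gg \epsilon/\ell$ by the scaling, this is the main obstacle. It requires the perturbation to leave both the dual-ball feasibility and the gradient bound intact, and that is exactly what the new outer construction must be tailored for.

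Finally, for $\sA\in\cA_{\rm zr}\cap\cA_{\rm det}$, the zero-chain property gives the following by induction on the queries: after $t<c\,TK$ queries, the support of the queries under $\ord$ excludes the final primal coordinate. Therefore no query $\sA_{\x}^{(t)}[f]$ is an $\epsilon$-OS, which proves Theorem~\ref{thm:zr-lower}.
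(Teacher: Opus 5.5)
Your overall architecture matches the paper's: interleaved dual chains forming a saddle zero-chain, a rescaling that makes the unconstrained dual maximizer feasible, a proximal-point argument that relies on a robust region around the face where the last chain coordinate vanishes, and induction on supports. However, two quantitative steps fail as you have stated them.

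\textbf{The parameter count does not follow from (b)--(d).} Your outer terms are of Carmon type at unit scale, so $\nabla^2_{\x\x}\fbar$ is of order one. Hence $\ell$-smoothness of $f=\lambda\fbar(\x/\sigma;\y/\tau)$ forces $\lambda\lesssim\ell\sigma^2$. By (d), $\norm{\nabla\Phi}\gtrsim\lambda/(\sigma\sqrt K)$. Making this exceed $\epsilon$ requires $\sigma\gtrsim\epsilon\sqrt K/\ell$, and hence $\lambda\gtrsim\epsilon^2K/\ell$. Your gap condition $\lambda T\lesssim\Delta$ then gives $T\lesssim\ell\Delta/(\epsilon^2K)$, i.e.\ $TK\lesssim\ell\Delta/\epsilon^2$ for every $K$. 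Your $K\asymp\ell D_{\cY}/\epsilon$ is consistent with (c)--(d), but $T\asymp\ell\Delta/\epsilon^2$ is not, so the factor $\ell D_{\cY}/\epsilon$ is lost.

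What is missing is a normalization of the inner chain under which the value function does not weaken as the chain grows. The paper does three things:
\begin{enumerate}[(i)]
\item It couples the chain ends through $k_N^{-1/2}\ip{a\ee_1-b\ee_N}{\w}$, where $k_N=(\bB_N)_{1N}\asymp N$.
\item It takes strong concavity $N^{-2}$. The maximizer profile is then flat, with all entries of $\bB_N\ee_1$ of order $N$, rather than geometrically decaying.
\item It adds $\frac{2-c_N}{2}(a^2+b^2)$, so that $\max_{\w}H=a^2-ab+b^2$ exactly.
\end{enumerate}
Smoothness, the gradient lower bound $g_0$, and the gap $c_{\Delta}M$ are then all independent of $N$. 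The only $N$-dependence is $\norm{\w^\star}\le c_{\y}N\sqrt{a^2+b^2}$. A single scale $\lambda\asymp\epsilon/\ell$ then yields $N\asymp\ell D_{\cY}/\epsilon$ and $M\asymp\ell\Delta/\epsilon^2$.

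\textbf{Dual-ball feasibility needs an aggregate bound.} Since $\norm{\y^\star}^2$ sums over chains, the combined input to all $T$ inner chains must be bounded independently of $T$. Passing each input through a bounded map only bounds each input separately. At the points where the gradient bound must be certified, every pre-frontier stage of a Carmon-type chain carries an input of order one, so $\norm{\y^\star}$ acquires a factor $\sqrt T$. After rescaling this forces $K\lesssim\ell D_{\cY}/(\epsilon\sqrt T)$, and with $T\asymp\ell\Delta/\epsilon^2$ the count falls back to order $\epsilon^{-2}$. Moreover, where the maximizer is infeasible, the constrained value function differs from the unconstrained one, and your property (d) says nothing about it.

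The paper's outer terms are built to prevent exactly this:
\begin{enumerate}[(i)]
\item Both connector couplings carry the gate $q(s_{i-1})(1-q(s_i))$. This gate vanishes at every stage except the frontier stage of Lemma~\ref{lem:bounded-connector}, so the outer force on the whole connector vector $\bm\nu$ has norm below $7$, independently of $M$.
\item The \emph{constrained} inner value satisfies $\ip{\bm\nu}{\nabla V_D(\bm\nu)}\ge\frac25\norm{\bm\nu}^2$. This holds even when the ball constraint is active, which avoids circularity.
\item Together these give $\norm{\bm\nu}\le20$, hence $\norm{\y^\star}\le20c_{\y}N\le D/2$.
\end{enumerate}
The robustness you leave as a design goal for the Moreau step comes from the same analysis. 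The gradient bound is proved on the whole region $s_M\le1/5$, and $\lambda$ is chosen so that the proximal displacement $\epsilon/(2\ell)$ is at most $1/8$ in unscaled coordinates.
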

\begin{theorem}[Deterministic lower bound]
\label{thm:deterministic}
There are numerical constants $c_0,\rev{c_1}>0$ such that, for every
$\ell,D_{\cY},\Delta,\epsilon>0$ satisfying $
  \epsilon\le c_0\min\{\ell D_{\cY},\sqrt{\ell\Delta}\},$
one has
\[
  \cT_{\epsilon}\bigl(
      \cA_{\rm det},\cF(\ell,D_{\cY},\Delta)
  \bigr)
  \ge \rev{c_1}\frac{\ell^2D_{\cY}\Delta}{\epsilon^3}.
\]
\end{theorem}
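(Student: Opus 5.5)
We derive Theorem~\ref{thm:deterministic} from Theorem~\ref{thm:zr-lower} together with Lemmas~\ref{lem:rotation-covariance} and~\ref{lem:resisting-oracle}, following the reduction of \citet[Lemma~7]{carmon2019lower}. Take $c_0,c_1$ from Theorem~\ref{thm:zr-lower}, and let $f\in\cF(\ell,D_{\cY},\Delta)$ be the hard instance it provides. We will use that this instance is defined on $\R^m\times\B_{D_{\cY}}^n$, which is how the construction of Section~\ref{sec:construction} is set up. Put $T_0:=\lceil c_1\ell^2D_{\cY}\Delta/\epsilon^3\rceil$. The goal is then to show that, for every $\sA\in\cA_{\det}$, some $g\in\cF(\ell,D_{\cY},\Delta)$ satisfies: $\sA_{\x}^{(t)}[g]$ is not an $\epsilon$-OS for $g$ for any $0\le t<T_0$. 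Taking the infimum over $\sA$ and using $\lceil c_1 a\rceil\ge c_1 a$ then gives the bound.

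Fix $\sA\in\cA_{\det}$. Lemma~\ref{lem:resisting-oracle} with this $T_0$ gives a zero-respecting deterministic $\sZ$. Applied to the hard instance $f$, it also gives $\bU\in\operatorname O(m+T_0,m)$ and $\bV\in\operatorname O(n+T_0,n)$ satisfying \eqref{eq:projected-transcript}. We take $g:=f_{\bU,\bV}$. Lemma~\ref{lem:rotation-covariance} gives $g\in\cF(\ell,D_{\cY},\Delta)$. Combining \eqref{eq:rotated-os} with \eqref{eq:projected-transcript}, for each $0\le t<T_0$,
\[
\norm*{\nabla\bigl[\Phi_{\bU,\bV}\bigr]_{\frac{1}{2\ell}}\bigl(\sA_{\x}^{(t)}[g]\bigr)}
=\norm*{\nabla\Phi_{\frac{1}{2\ell}}\bigl(\bU^\top\sA_{\x}^{(t)}[g]\bigr)}
=\norm*{\nabla\Phi_{\frac{1}{2\ell}}\bigl(\sZ_{\x}^{(t)}[f]\bigr)}.
\]
So $\sA$ reaches an $\epsilon$-OS of $g$ before step $T_0$ exactly when $\sZ$ reaches an $\epsilon$-OS of $f$ before step $T_0$.

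The key step is to rule out the latter. Theorem~\ref{thm:zr-lower} gives $\cT_\epsilon(\cA_{\rm zr}\cap\cA_{\det},\{f\})\ge c_1\ell^2D_{\cY}\Delta/\epsilon^3$, which is a statement about the infimum over the class. Since $\sZ$ belongs to that class, its hitting time on $f$ is at least that bound. Because the hitting time is an integer, it is at least $T_0$. Hence $\sZ_{\x}^{(t)}[f]$ is not an $\epsilon$-OS for any $t<T_0$, and by the display above neither is $\sA_{\x}^{(t)}[g]$.

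The main subtlety is the order of the quantifiers. The instance $f$ must be fixed before $\sA$ is chosen, and $\sZ$ depends on $\sA$ and $T_0$. This is acceptable because Theorem~\ref{thm:zr-lower} gives a single $f$ that works uniformly for every zero-respecting deterministic algorithm. A second point to verify is that $f$ lives on the domain $\R^m\times\B_{D_{\cY}}^n$ required by both lemmas; otherwise we would need to appeal to the specific construction to confirm it. The numerical constants are the same as in Theorem~\ref{thm:zr-lower}.
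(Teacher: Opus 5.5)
Your proposal is correct and follows essentially the same route as the paper: fix the zero-respecting hard instance on $\R^m\times\B_{D_{\cY}}^n$ with $T_0=\lceil c_1\ell^2D_{\cY}\Delta/\epsilon^3\rceil$, then use the resisting-oracle lemma to get $\sZ$, $\bU$, $\bV$, and the rotation lemma to transfer non-stationarity to $f_{\bU,\bV}$. The only cosmetic difference is how you conclude that $\sZ$ makes no $\epsilon$-OS query before $T_0$. The paper uses the proof-internal fact that no query with $t<L$ is $\epsilon$-OS, together with $L\ge T_0$. You instead derive it from the theorem's infimum bound and the integrality of hitting times; both are valid.
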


\begin{remark}[Arbitrary deterministic output]
\label{rem:arbitrary-output}
Theorem~\ref{thm:deterministic} also covers a deterministic point computed
from the final transcript but not previously queried.  Indeed, append that
primal point, paired with the feasible dual origin, as the algorithm's next
query and apply the lower bound for queried points to the augmented deterministic
algorithm.
\end{remark}

\subsection{Matching upper bound}
% \label{subsec:main-upper} 
The following theorem gives our matching upper bound.
The algorithm \TrackedFOAM{} is developed in
Section~\ref{sec:upper}.
\begin{theorem}[Oracle complexity of \TrackedFOAM{}]
\label{thm:upper}
Let $\ell,D_{\cY},\Delta>0$ and
$f\in\cF(\ell,D_{\cY},\Delta)$.
For any $\epsilon>0$,
Algorithm~\ref{alg:tracked-foam-overview}, initialized at $\z^0=\bz$,
returns an $\epsilon$-OS point using
\begin{equation}
  \cO\left(
    \left(\frac{\ell\Delta}{\epsilon^2}+1\right)
    \max\left\{1,\frac{\ell D_{\cY}}{\epsilon}\right\}
  \right)
  \label{eq:uniform-rate}
\end{equation}
first-order saddle-oracle calls.
% Every saddle-oracle query is feasible.
\end{theorem}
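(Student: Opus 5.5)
The plan is a two-level Lyapunov argument. The outer level is an inexact proximal-point method on the smoothed and perturbed problem. The inner level is the accelerated \FOAM{} solver, whose state is carried over between outer iterations. We fix the dual regularization $\mu:=\Theta(\epsilon/D_{\cY})$ and a reference point $\bar\y\in\cY$, and replace $f$ by
\[
f_\mu(\x;\y):=f(\x;\y)-\tfrac{\mu}{2}\norm{\y-\bar\y}^2 .
\]
The value function $\Phi_\mu$ then satisfies $0\le\Phi-\Phi_\mu\le\mu D_{\cY}^2/2=\cO(\epsilon D_{\cY})$. By the standard envelope comparison, an $(\epsilon/2)$-OS point of $\Phi_\mu$ (in the $1/(2\ell)$-envelope) is an $\epsilon$-OS point of $\Phi$, provided the constant in $\mu$ is small enough.

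At outer iteration $k$ with proximal center $\z^k$, the subproblem is
\[
\min_{\x\in\cX}\max_{\y\in\cY}\; f_\mu(\x;\y)+\ell\norm{\x-\z^k}^2 .
\]
This subproblem is $\ell$-strongly convex in $\x$, $\mu$-strongly concave in $\y$, and $\cO(\ell)$-smooth. \FOAM{} \citep{kovalev2022first} has a Lyapunov function $\mathcal L_k$ that contracts by a constant factor in $\cO(\sqrt{\ell/\ell}+\sqrt{\ell/\mu})=\cO(\max\{1,\sqrt{\ell D_{\cY}/\epsilon}\})$ oracle calls. That count is not yet the claimed bound, so the hidden constants must be tracked. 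The correct rate $\ell D_{\cY}/\epsilon$ per outer step arises as $\ell/\mu$ when the contraction is measured in the primal-dual gap that the outer analysis needs. I will take the per-block cost from the \FOAM{} contraction lemma in Appendix~\ref{app:foam-details} and write it as $K=\cO(\max\{1,\ell D_{\cY}/\epsilon\})$.

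Each outer step runs exactly one block of $K$ \FOAM{} iterations, warm-started from the previous state. It then sets $\z^{k+1}$ to the primal output and shifts the center. The core estimate is a global Lyapunov descent for the potential
\[
\Psi_k:=\Phi_{\mu,1/(2\ell)}(\z^k)+c\,\mathcal L_k ,
\]
of the form
\[
\Psi_{k+1}\le\Psi_k-\frac{1}{C\ell}\norm{\nabla\Phi_{\mu,1/(2\ell)}(\z^k)}^2 .
\]
It is proved in three steps:
\begin{enumerate}[(a)]
\item Apply the usual inexact-PPA descent. Because the envelope is $\cO(\ell)$-smooth and $\z^{k+1}$ approximates $\prox(\z^k)$, we get
\[
\Phi_{\mu,1/(2\ell)}(\z^{k+1})\le\Phi_{\mu,1/(2\ell)}(\z^k)-\frac{1}{8\ell}\norm{\nabla\Phi_{\mu,1/(2\ell)}(\z^k)}^2+\cO\bigl(\ell\norm{\z^{k+1}-\prox(\z^k)}^2\bigr).
\]
\item Bound the error term by $\mathcal L_k^{\rm after}$ using strong convexity of the subproblem.
\item Bound the Lyapunov function at the new center: $\mathcal L_{k+1}^{\rm before}\le 2\mathcal L_k^{\rm after}+\cO(\ell\norm{\z^{k+1}-\z^k}^2)$. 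The recentering changes the saddle point by at most $\cO(\norm{\z^{k+1}-\z^k})$, with a constant depending on the condition number, and $\norm{\z^{k+1}-\z^k}\lesssim\ell^{-1}\norm{\nabla\Phi_{\mu,1/(2\ell)}}+\text{error}$.
\end{enumerate}
If the contraction factor $\rho$ per block is a sufficiently small absolute constant, the recentering cost is absorbed by the descent term, and $\mathcal L$ never needs $\epsilon$-dependent accuracy.

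Summing the descent inequality gives
\[
\min_k\norm{\nabla\Phi_{\mu,1/(2\ell)}(\z^k)}^2\le C\ell(\Psi_0-\inf\Psi)/N .
\]
Since $\Psi_0-\inf\Psi\le\Delta+\cO(\epsilon D_{\cY})+c\mathcal L_0$, taking $N=\cO(\ell\Delta/\epsilon^2+1)$ suffices once $c\mathcal L_0$ is controlled. For the initialization I will use the geometric warm start of Appendix~\ref{app:foam-details}. It solves a sequence of problems with $\mu_j=2^{-j}\ell$ down to $\mu$, spending $\cO(\ell/\mu_j)$ calls at stage $j$; the geometric sum totals $\cO(\ell/\mu)$, so no logarithmic factor appears. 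This yields $\mathcal L_0=\cO(\Delta+\epsilon^2/\ell)$. Multiplying $N$ by $K$ gives \eqref{eq:uniform-rate}. The remaining $\epsilon\ge\ell D_{\cY}$ cases follow by the $\max\{1,\cdot\}$ convention with $\mu=\Theta(\ell)$. Returning the best iterate requires certifying it, which is done by outputting the $\z^k$ with the smallest computable surrogate $\ell\norm{\z^{k+1}-\z^k}$. That surrogate is within a constant factor of the true envelope gradient up to $\mathcal L$-terms already controlled.

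The main obstacle is step (c). The shift of the \FOAM{} Lyapunov function under a change of proximal center must be bounded with constants independent of $\mu$; otherwise the factor $\ell/\mu$ would reappear multiplicatively. I would first try to measure $\mathcal L$ in a norm weighted by $(\ell,\mu)$, so that the saddle point's primal part moves by at most $\norm{\z^{k+1}-\z^k}$ and its dual part moves by $\mathcal O(\sqrt{\ell/\mu})$ times that, in a metric where $\mu\norm{\cdot}^2$ weights the dual coordinate. This makes the recentering cost $\cO(\ell\norm{\z^{k+1}-\z^k}^2)$ uniformly.
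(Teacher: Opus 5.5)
Your architecture matches the paper's: inexact proximal point on the regularized envelope, the \FOAM{} state carried across centers, a potential equal to the envelope plus a multiple of the inner error, a geometric warm start, and best-iterate output. However, your dual regularization scale is wrong, and you reach \eqref{eq:uniform-rate} only through two compensating errors.

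With $\mu=\Theta(\epsilon/D_{\cY})$, your bias step fails. A value perturbation of size $\delta\asymp\mu D_{\cY}^2$ can move the proximal point by $\sqrt{\delta/\ell}$. The comparison you actually have is Lemma~\ref{lem:smoothing}, $\norm{\nabla\Phi_{1/(2\ell)}-\nabla p_r}\le D_{\cY}\sqrt{2\ell\mu}$. For your $\mu$ this is $\Theta(\sqrt{\ell D_{\cY}\epsilon})\gg\epsilon$ whenever $\epsilon\ll\ell D_{\cY}$, whatever the constant. The square root is sharp. Take $\bar{\y}=\bz$, $\cY=[0,D_{\cY}]$, and $f(x;y)=-\frac{\mu}{2}(y-\phi(x))^2+\frac{\mu}{4}\phi(x)^2$, where $\phi$ ramps smoothly from $0$ to $D_{\cY}$ over a length $D_{\cY}\sqrt{\mu/\ell}$, so $f$ is $\cO(\ell)$-smooth. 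Then $\Phi_\mu\equiv0$, while $\Phi=\frac{\mu}{4}\phi^2$ has envelope gradients of size $\gtrsim D_{\cY}\sqrt{\ell\mu}$. Hence you need $\mu\lesssim\epsilon^2/(\ell D_{\cY}^2)$, as in \eqref{eq:dual_reg}.

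With that choice, the $\cO(1+\sqrt{\ell/\mu})$ calls per constant-factor \FOAM{} contraction (Lemma~\ref{prop:foam-interface}) already give $\cO(\max\{1,\ell D_{\cY}/\epsilon\})$. Your claim that a block costs $\ell/\mu$ ``when measured in the primal-dual gap'' has no basis. By \eqref{eq:interface-primal-error}, the Lyapunov function itself bounds the primal error that the outer step needs, and it contracts in $\cO(1+\sqrt{\ell/\mu})$ calls. The same $\ell/\mu_j$ exponent error appears in your warm-start count. This claim is the only thing that makes your $\mu$ produce the right per-block number.

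The wrong scale also breaks the outer count. The warm start gives $\mathcal L_0=\cO(\Delta+\mu D_{\cY}^2)=\cO(\Delta+\epsilon D_{\cY})$, not $\cO(\Delta+\epsilon^2/\ell)$. Your own bound $\Psi_0-\inf\Psi\le\Delta+\cO(\epsilon D_{\cY})+c\mathcal L_0$ then yields only $N=\cO(\ell\Delta/\epsilon^2+\ell D_{\cY}/\epsilon)$. Under the correct scaling, $\mu D_{\cY}^2=\cO(\epsilon^2/\ell)$, and this term contributes only $\cO(1)$ to $N$.

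Two further points need repair. First, in step (c), bounding the saddle-point shift does not by itself control the fast-pair objective gap inside $\mathcal L$. The paper translates both conjugate variables, $\omegavec\mapsto\omegavec-2\ell\bm d$. The new conjugate objective is then the old one minus the linear term $\ip{(2\bm d,\bz)}{\cdot}$, up to a constant. The dual norm of that term in the metric $\operatorname{diag}(\ell^{-1}\bI,\mu\bI)$ is $2\sqrt{\ell}\norm{\bm d}$, independent of $\mu$. This yields $\mathcal L'\le2\mathcal L+24\ell\norm{\bm d}^2$ (Proposition~\ref{prop:anchor-shift}).

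Second, selecting the index that minimizes $\ell\norm{\z^{k+1}-\z^k}^2$ alone does not certify stationarity. The inner error $\mathcal L_k$ is not computable and is controlled only on average, not at the selected index. At $k=0$, for instance, it can be of order $\Delta$ even if $\z^1=\z^0$. The paper instead propagates the computable bound $B^{t+1}=\frac{1}{400}(2B^t+24\ell\norm{\bm d^t}^2)$ and selects by $Q^t=\ell\norm{\bm d^t}^2+B^t$.
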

\begin{remark}
In the regime of Theorem~\ref{thm:deterministic},
the upper bound in \eqref{eq:uniform-rate} simplifies to
$\cO(\ell^2D_{\cY}\Delta/\epsilon^3)$.
Thus, \TrackedFOAM{} is optimal among deterministic first-order
methods up to universal constants in this regime.
\end{remark}
\begin{remark}[Extensions to other stationarity notions and NC-SC problems]
With suitable parameter choices and initialization,
\TrackedFOAM{} also finds an $\epsilon$-game-stationary point
for NC-C problems, as defined in
\citet[Definition~2.1~\textup{(ii)}]{li2026smoothing},
with oracle complexity $\cO(\epsilon^{-5/2})$.
This improves the oracle complexity in
\citet{li2026smoothing} by removing the multiplicative
$\log(1/\epsilon)$ factor.

   For NC-SC problems, where $f$ is $\mu$-strongly concave in $\y$,
\TrackedFOAM{} can be configured to find either an $\epsilon$-OS
point or an $\epsilon$-GS point with oracle complexity
$\cO(\sqrt{\kappa}\epsilon^{-2})$, where $\kappa:=\ell/\mu$.
\citet{zhang2021complexity} removed the polylogarithmic factors
in $1/\epsilon$ from the bound of \citet{lin2020near}.
Our extension further removes the remaining logarithmic factors
in the problem parameters, matching the lower bounds of
\citet{li2021lower,zhang2021complexity} in its dependence
on $\epsilon$ and $\kappa$.
\end{remark}

%  {\color{red}
% \begin{remark}[Extensions to GS and NC-SC problems]
% % With suitable parameter choices, initialization, and output rules,
% % \TrackedFOAM{} also finds an $\epsilon$-GS point for NC-C problems
% % in the sense of \citet[Definition~2.1~\textup{(ii)}]{li2026smoothing},
% % with oracle complexity $\cO(\epsilon^{-5/2})$.
% % This removes the multiplicative logarithmic dependence on
% % $1/\epsilon$ in the GS bound of \citet{li2026smoothing}.

% For NC-SC problems, where $f$ is $\mu$-strongly concave in $\y$,
% the method also finds an $\epsilon$-stationary point of the smooth
% inner maximization function $\Phi$, i.e.,
% $\norm{\nabla\Phi(\x)}\le\epsilon$,
% with oracle complexity $\cO(\sqrt{\kappa}\epsilon^{-2})$,
% where $\kappa:=\ell/\mu$. Same results can be derived for $\epsilon$-GS point.  

% The bounds above display only the dependence on $\epsilon$ and
% $\kappa$, with other problem parameters and additive costs omitted.
% For NC-SC problems, \citet{zhang2021complexity} removed the
% accuracy-dependent polylogarithmic factors in
% \citet{lin2020near}, while retaining logarithmic factors in the
% problem parameters. The bound above further removes these
% multiplicative logarithmic factors and matches the optimal dependence
% on $\epsilon$ and $\kappa$ established by the lower bounds of
% \citet{li2021lower,zhang2021complexity}.
% \end{remark}
% }

\section{The Construction of Our Hard Instance}
\label{sec:construction}

% We now develop the family of unscaled hard instances underlying
% Theorem~\ref{thm:zr-lower}.  We first isolate the four properties required
% by the proof of the lower bound, and then construct an instance that realizes
% them by combining new outer terms with strongly concave inner chains.  Section~\ref{sec:verification} verifies these properties
% before the instance is scaled to the target function class.
In this section, we construct the hard instance used to prove
Theorem~\ref{thm:zr-lower}.
We first state four properties sufficient for the lower bound
and then give an explicit construction.
\subsection{Certificates for the hard instance}
\label{subsec:abstract-framework}
We begin by providing four properties of the unscaled hard instance
sufficient to prove Theorem~\ref{thm:zr-lower}.  Call an integer pair
$(M,N)$ admissible if $M\ge1$ and $N\ge10$.
For each admissible pair and each $D>0$, write the primal variable as
$\x=(\s,\bm{\nu})\in\R^{3M}$, where the state vector
$\s:=(s_1,\ldots,s_M)$ and the connector vector
$\bm{\nu}:=(a_1,b_1,\ldots,a_M,b_M)$. We additionally set
$s_0\equiv1$.
Let dual coordinates be $\y=(\y^{(1)},\ldots,\y^{(M)}),$
where $\y^{(i)}\in\R^N$ for $i\in[M]$.

Define the bounded dual domain $\cY_D:=\B_D^{MN}$, and consider
$
  \fbar:\R^{3M}\times\cY_D\to\R
$
with coordinate ordering $\ord$.
Set $L:=M(N+3)$, and denote the corresponding value function by
$
  \bar\Phi(\s,\bm{\nu})
  :=
  \max_{\y\in\cY_D}\fbar(\s,\bm{\nu};\y).
$
We impose four conditions on $(\fbar,\ord)$.

\begin{condition}[Unscaled hard instance properties]
\label{cond:unscaled}
There exist numerical constants
$\ell_0,g_0,c_{\Delta},c_D>0$, independent of $M,N,D$, such that,
for every admissible pair $(M,N)$ and every $D>0$, the corresponding
unscaled hard instance $\fbar$ and coordinate ordering $\ord$ satisfy
the following properties:
\begin{enumerate}[
  label=\textup{(C\arabic*)},
  ref=\textup{(C\arabic*)},
  leftmargin=*,
  itemsep=0.6em
]
\item
The function $\fbar$ is jointly $\ell_0$-smooth and strongly concave in
$\y$, and its value function $\bar\Phi$ is differentiable.

\item
The function $\fbar$ is a first-order saddle zero-chain with respect to
$\ord$.

\item
The $\pi_{L}$-th coordinate of $(\x,\y)$ is $s_M$.  Moreover,
whenever $N\le c_DD$,
\[
  s_M\le\frac{1}{5}
  \quad\Longrightarrow\quad
  \norm*{\nabla\bar\Phi(\s,\bm{\nu})}\ge g_0.
\]

\item
The initial gap of the value function satisfies
\[
  \bar\Phi(\bz,\bz)
  -
  \inf_{(\s,\bm{\nu})\in\R^{3M}}\bar\Phi(\s,\bm{\nu})
  \le c_{\Delta}M.
\]
\end{enumerate}
\end{condition}
Conditions~\ref{cond:unscaled}~\textup{(C2)} and~\textup{(C3)}
force sequential coordinate discovery and rule out small gradients
of the value function before the terminal coordinate is reached,
provided $N\le c_DD$.
Conditions~\ref{cond:unscaled}~\textup{(C1)} and~\textup{(C4)}
allow the instance to be scaled into the target function class.

The bound on the region $s_M\le 1/5$ in~\textup{(C3)}
also allows us to transfer the lower bound on the gradient norm
of the value function to that of its Moreau envelope.
After scaling, a sufficiently small gradient norm of the Moreau
envelope at a query point with $s_M=0$ would keep the associated
proximal point within the corresponding scaled region.
The proximal optimality condition would then contradict
the gradient lower bound in~\textup{(C3)}.

The remainder of this section constructs a pair $(\fbar,\ord)$
satisfying these certificates for every admissible pair $(M,N)$
and every $D>0$.
% Conditions~\ref{cond:unscaled}~\textup{(C2)} and~\textup{(C3)}
% force sequential coordinate discovery and rule out small
% gradients of the value function before the terminal coordinate is reached, provided $N\le c_DD$.
% Conditions~\ref{cond:unscaled}~\textup{(C1)} and~\textup{(C4)}
% allow the instance to be scaled into the target function class.

% The remainder of this section constructs a pair $(\fbar,\ord)$ satisfying
% these certificates for every admissible pair $(M,N)$ and every $D>0$.

\subsection{Overview of the construction}

% Fix an admissible pair $(M,N)$ and $D>0$.
% Following the strategy of combining chains used by \citet{li2021lower}, we
% build $M$ stages, each consisting of outer terms and a strongly
% concave inner chain with $N$ dual coordinates.
% The outer construction controls progress through the state variables and
% provides the terminal gradient obstruction, while the inner chains
% delay coordinate discovery.

% The bounded dual domain requires additional control of the
% unconstrained maximizer, whose norm can be bounded using the connector
% vector. We design the outer terms to control this vector where
% the terminal gradient argument is applied, so that a suitable choice
% of $N$ makes the maximizer feasible.

Fix an admissible pair $(M,N)$ and $D>0$.
Following \citet{li2021lower}, we construct $M$ stages,
each containing primal variables $(a_i,b_i,s_i)$ and
a strongly concave inner chain with dual variable $\y^{(i)}\in\R^N$.
These variables are coupled so that a zero-respecting algorithm
must reveal the coordinates of the inner chain before advancing
to the next stage.
The outer terms are designed to ensure that the gradient of the value function remains bounded away from zero when the final
state variable $s_M$ is small.

The full dual vector is constrained to a Euclidean ball of diameter $D$. We therefore also ensure that the unconstrained dual maximizer
is feasible at the points used in the argument for the gradient lower bound.
At these points, imposing the dual constraint leaves the
inner maximum unchanged.

The resulting coordinate order is
\begin{equation}
  \ord=(a_1,y_1^{(1)},\ldots,y_N^{(1)},b_1,s_1,
        \ldots,
        a_M,y_1^{(M)},\ldots,y_N^{(M)},b_M,s_M).
\label{eq:joint-coordinate-order}
\end{equation}
The saddle zero-chain property proved in Section~\ref{sec:verification}
forces sequential discovery along this order, giving a chain length
of $L=M(N+3)$; see Figure~\ref{fig:ncc-chain}.

\begin{figure}[h]
  \centering
  \includegraphics[width=0.70\textwidth]{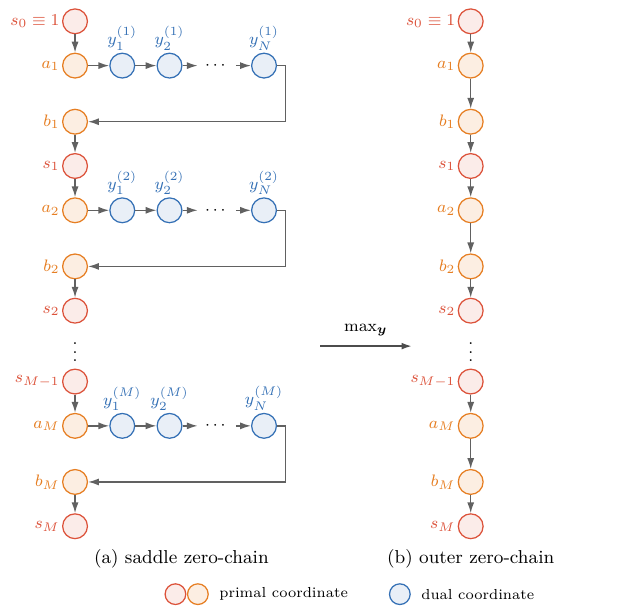}
  \caption{Schematic of the hard instance.  Each stage follows the path
    $a_i\to y_1^{(i)}\to\cdots\to y_N^{(i)}\to b_i\to s_i$.}
  \label{fig:ncc-chain}
\end{figure}

We next construct the inner chain and then combine it with
the outer terms to define the full instance.
\subsection{The strongly concave inner chain}
\label{subsec:inner-block}
We construct an inner chain with primal variables $a,b\in\R$ and
dual variable $\w\in\R^N$. Its coefficients are chosen so that the inner maximum is
$a^2-ab+b^2$, independently of $N$.
The construction adapts the regularized quadratic chain of
\citet[Section~5.1]{li2021lower}, which builds on the classical
construction of \citet{nesterov}.
Define
\[
\bA_N:=\left(
\begin{array}{ccccc}
1 & -1 &   &   &   \\
-1 & 2 & -1 &   &   \\
   & -1 & \ddots & \ddots &   \\
   &   & \ddots & 2 & -1 \\
   &   &   & -1 & 1
\end{array}
\right)\in\R^{N\times N},
  \quad
  \bM_N:=\frac{1}{N^2}\bI_N+\bA_N,
  \quad\mbox{and}\quad
  \bB_N:=\bM_N^{-1}.
\]
Since $\bA_N$ is positive semidefinite,
$\bM_N\succeq N^{-2}\bI_N\succ0$, so $\bB_N$ is well defined.
Set
\[
  k_N:=(\bB_N)_{1N},
  \qquad\mbox{and}\qquad
  c_N:=\frac{(\bB_N)_{11}}{k_N}.
\]
The estimates of \citet[Lemma~7]{li2021lower} give $k_N>0$,
so $c_N$ is also well defined.

We define the inner dual chain by
\[
  H(a,b;\w)
  :=-\frac{1}{2}\w^\top\bM_N\w
    +k_N^{-\frac{1}{2}}\ip*{a\ee_1-b\ee_N}{\w}
    +\frac{2-c_N}{2}(a^2+b^2).
\]
The primal variables couple to the two ends of the dual chain:
$a$ interacts with $w_1$, and $b$ with $w_N$.
Since $\bM_N\succeq N^{-2}\bI_N$, the function $H(a,b;\cdot)$
is $N^{-2}$-strongly concave and has the unique maximizer
\[
  \w^\star(a,b)
  =k_N^{-\frac{1}{2}}\bB_N(a\ee_1-b\ee_N).
\]
The normalization by $k_N$ and the quadratic correction
$\frac{2-c_N}{2}(a^2+b^2)$ ensure that
\begin{equation}
  Q(a,b):=\max_{\w\in\R^N}H(a,b;\w)=a^2-ab+b^2.
  \label{eq:effective-link}
\end{equation}
The maximizer satisfies
\[
  \norm*{\w^\star(a,b)}\le c_{\y}N\sqrt{a^2+b^2},
\]
where $c_{\y}>0$ is a numerical constant independent of $N$.
This estimate will be used to control the dual maximizers
when the chains are combined over a bounded dual domain.
Moreover, $H$ has a joint smoothness constant independent of $N$
and is a zero-chain in the order $(a,w_1,\ldots,w_N,b)$.
These properties are proved in Lemma~\ref{lem:inner-interface}
in Appendix~\ref{app:inner-interface-proof}.
\subsection{The composite NC-C hard instance}
\label{subsec:outer-chain}
We combine $M$ copies of the inner chain using three coupling
functions and a state regularizer.
The entrance and exit couplings connect each inner chain to the
adjacent states, while the state coupling and regularizer control
the state configuration at points with small gradients.
We use a smooth activation function to switch the couplings
according to the adjacent states, and bounded extensions of the
identity to keep the couplings globally smooth.
We first define these auxiliary functions.

Let
\begin{equation}
\begin{aligned}
p(t)&:=
\begin{cases}
0, & t\le0,\\[2pt]
\displaystyle
\int_0^t
\frac{\exp(-1/v)}
     {\exp(-1/v)+\exp(-1/(1-v))}\,{\rm d}v,
& 0<t<1,\\[10pt]
t-\frac12, & t\ge1,
\end{cases}\quad \text{and} \quad 
q(t)&:=p'\!\left(\frac{5t-1}{4}\right).
\end{aligned}
\label{eq:base-ramp}
\end{equation}
The function $q$ smoothly transitions from $0$ to $1$, with
$q(t)=0$ for $t\le1/5$ and $q(t)=1$ for $t\ge1$. 

We also define two smooth bounded extensions of the identity mapping:
\[
\begin{aligned}
e_{\s}(t)&:=p(t+3)-p(t-2)-\frac52,\qquad\mbox{and}\\
e_{\bm{\nu}}(t)&:=p(t+22)-p(t-21)-\frac{43}{2}.
\end{aligned}
\]
These satisfy $e_{\s}(t)=t$ for $|t|\le2$ and
$e_{\bm{\nu}}(t)=t$ for $|t|\le21$, and are constant for
sufficiently large positive or negative inputs.
The functions $q$, $e_{\s}$, and $e_{\bm{\nu}}$ are infinitely
differentiable, with compactly supported first derivatives.
Their required bounds are given in
Lemmas~\ref{lem:outer-gate} and~\ref{lem:identity-extensions}
in Appendices~\ref{app:outer-gate-proof}
and~\ref{app:identity-extensions-proof}.

For consecutive state variables $u,v$ and connector variables $a,b$,
define the entrance, exit, and state coupling functions by
\begin{align}
&C_{\rm en}(u,a,v)
:=-4q(u)\bigl(1-q(v)\bigr)e_{\bm{\nu}}(a),
\label{eq:activation-component}\\
&C_{\rm ex}(u,b,v)
:=-q(u)e_{\s}(v)\bigl(1-q(v)\bigr)e_{\bm{\nu}}(b),\qquad\mbox{and}
\label{eq:writeback-component}\\
&C_{\rm st}(u,v)
:=24q(v)\bigl(1-e_{\s}(u)\bigr)\bigl(1-q(u)\bigr).
\label{eq:ordering-component}
\end{align}
Set
\[
\begin{aligned}
c_R:=25+\norm*{\nabla_uC_{\rm st}}_{\infty}
 +\norm*{\nabla_vC_{\rm st}}_{\infty}+\norm*{\nabla_uC_{\rm en}}_{\infty}
 +\norm*{\nabla_vC_{\rm en}}_{\infty}+\norm*{\nabla_uC_{\rm ex}}_{\infty}
 +\norm*{\nabla_vC_{\rm ex}}_{\infty},
\end{aligned}
\]
which is a numerical constant, and define the state regularizer
\begin{equation}
R(t):=\frac{12}{5}p(-10t)
-\frac{c_R+1}{10}\bigl[p(10t-1)-p(10t-10)\bigr].
\label{eq:phase-potential}
\end{equation}
% The entrance and exit coupling functions connect each inner chain
% to the adjacent state variables. The state coupling $C_{\rm st}$
% and regularizer $R$ are used to establish the gradient bound in
% Condition~\ref{cond:unscaled}~\textup{(C3)}.
The outer terms are designed to keep the norm of the connector
vector bounded independently of $M$ whenever $s_M\le 1/5$
and the gradient of the value function is sufficiently small.
Together with the inner maximizer estimate, this ensures that
the unconstrained dual maximizer is feasible when $N\le c_DD$.
We establish these properties in the proof of
Condition~\ref{cond:unscaled}~\textup{(C3)}.

With $s_0=1$, we define the unscaled hard instance on
$\R^{3M}\times\cY_D$ by
\begin{equation}
\begin{aligned}
\fbar(\s,\bm{\nu};\y)
:=\sum_{i=1}^{M}\Bigl[
  &C_{\rm en}(s_{i-1},a_i,s_i)
   +H(a_i,b_i;\y^{(i)})
   +C_{\rm ex}(s_{i-1},b_i,s_i)+C_{\rm st}(s_{i-1},s_i)+R(s_i)
\Bigr].
\end{aligned}
\label{eq:hard-instance}
\end{equation}
Within each stage, the entrance coupling, inner chain, and exit
coupling give the coordinate order
\[
s_{i-1}\xrightarrow{\ C_{\rm en}\ }a_i
\xrightarrow{\ H\ }y_1^{(i)}\xrightarrow{\ H\ }\cdots
\xrightarrow{\ H\ }y_N^{(i)}\xrightarrow{\ H\ }b_i
\xrightarrow{\ C_{\rm ex}\ }s_i.
\]
The state coupling and regularizer preserve this order.

For the analysis in Section~\ref{sec:verification}, we also consider
the value function obtained by ignoring  the dual ball constraint:
\[
\bar\Phi_{\infty}(\s,\bm{\nu})
:=\max_{\y\in\R^{MN}}\fbar(\s,\bm{\nu};\y),
\]
where $\fbar$ is extended to all $\y\in\R^{MN}$ by the same
formula~\eqref{eq:hard-instance}.
By~\eqref{eq:effective-link}, maximizing each inner chain replaces
$H$ by $Q$, giving
\begin{equation}
\begin{aligned}
\bar\Phi_{\infty}(\s,\bm{\nu})
=\sum_{i=1}^{M}\Bigl[
  &C_{\rm en}(s_{i-1},a_i,s_i)
   +Q(a_i,b_i)
   +C_{\rm ex}(s_{i-1},b_i,s_i)+C_{\rm st}(s_{i-1},s_i)+R(s_i)
\Bigr].
\end{aligned}
\label{eq:unconstrained-value}
\end{equation}
The unique unconstrained maximizer is
\[
\y^\star(\bm{\nu})
:=\bigl(\w^\star(a_1,b_1),\ldots,\w^\star(a_M,b_M)\bigr).
\]
Whenever $\norm*{\y^\star(\bm{\nu})}\le D/2$, this maximizer
belongs to $\cY_D$, so
$\bar\Phi(\s,\bm{\nu})=\bar\Phi_{\infty}(\s,\bm{\nu})$.

\section{A Matching Upper-Bound Algorithm} 
\label{sec:upper}
We now develop \TrackedFOAM{}, which attains the upper bound in
Theorem~\ref{thm:upper}. 
% The method retains the full accelerated
% \FOAM{} state across successive SC-SC proximal subproblems.
% At each outer iteration, we run a prescribed number of \FOAM{}
% steps to reduce the error for the current subproblem by a constant
% factor.
Throughout this section, let $f\in\cF(\ell,D_{\cY},\Delta)$ with
associated feasible sets $\cX\subseteq\R^m$ and $\cY\subseteq\R^n$.
\subsection{Regularized proximal subproblems}
\label{subsec:regularized-proximal}
Following \citet{li2026smoothing}, we add a negative quadratic term
in $\y$ and a proximal term in $\x$.
Fix $r_{\x}:=2\ell$, let $0<r_{\y}\le\ell/8$, and write
$r=(r_{\x},r_{\y})$. Define
\begin{align*}
  &f_r(\x;\y)
  :=f(\x;\y)-\frac{r_{\y}}{2}\norm*{\y}^2,\qquad\mbox{and}\\
  &F_r(\x,\z;\y)
  :=f_r(\x;\y)+\frac{r_{\x}}{2}\norm*{\x-\z}^2.
\end{align*}
For each $\z\in\R^m$, the function $F_r(\cdot,\z;\cdot)$ is
$\ell$-strongly convex in $\x$ and $r_{\y}$-strongly concave in $\y$.
Denote its unique saddle point on $\cX\times\cY$ by
$(\x_{\z,r_{\y}}^\star,\y_{\z,r_{\y}}^\star)$.

To relate this subproblem to stationarity, define the regularized
value function and its Moreau envelope by
\begin{align*}
  &\Phi_r(\x)
  :=\max_{\y\in\cY}f_r(\x;\y)+\iota_{\cX}(\x),\qquad\mbox{and}\\
  &p_r(\z)
  :=\min_{\x}\left\{\Phi_r(\x)+\ell\norm*{\x-\z}^2\right\}
    =\min_{\x\in\cX}\max_{\y\in\cY}F_r(\x,\z;\y).
\end{align*}
% The gradient identity follows from
% \citet[Lemma~2.2]{davis2019stochastic}.
% Moreover, the firm-nonexpansiveness argument in
% \citet[Proposition~12.29]{bauschke2011convex}, applied to the convex
% shift $\Phi_r(\cdot)+(\ell/2)\norm{\cdot}^2$, shows that $p_r$ is
% continuously differentiable and $2\ell$-smooth.
% By~\eqref{eq:moreau}, an exact proximal update
% $\z^+=\x_{\z,r_{\y}}^\star$ is a gradient step on $p_r$ with
% stepsize $1/(2\ell)$.
The function $p_r$ is continuously differentiable, with gradient
given by~\eqref{eq:moreau1}
\citep[Lemma~2.2]{davis2019stochastic}, i.e.,
\begin{equation}
  \x_{\z,r_{\y}}^\star
  =\prox_{\frac{1}{2\ell}\Phi_r}(\z),
  \quad \text{and} \quad 
  \nabla p_r(\z)=2\ell(\z-\x_{\z,r_{\y}}^\star).
  \label{eq:moreau1}
\end{equation}
Moreover, $p_r$ is $2\ell$-smooth
\citep[Proposition~12.29]{bauschke2011convex}.

We next relate $\norm{\nabla p_r(\z)}$ to the original OS criterion.
The following lemma bounds the difference between
$\nabla p_r$ and $\nabla\Phi_{1/(2\ell)}$ caused by the dual
regularization.

\begin{lemma}
\label{lem:smoothing}
For every $r=(2\ell,r_{\y})$ with $r_{\y}>0$ and every $\z\in\R^m$,
\[
  \norm*{\nabla\Phi_{\frac{1}{2\ell}}(\z)-\nabla p_r(\z)}
  \le D_{\cY}\sqrt{2\ell r_{\y}}.
\]
\end{lemma}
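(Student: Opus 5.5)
The plan is to compare the two proximal points directly. By \eqref{eq:moreau1} and the analogous identity for $\Phi$, we have $\nabla\Phi_{1/(2\ell)}(\z)=2\ell(\z-\hat\x)$ and $\nabla p_r(\z)=2\ell(\z-\x^\star)$. Here $\hat\x:=\prox_{\frac{1}{2\ell}\Phi}(\z)$ and $\x^\star:=\x^\star_{\z,r_{\y}}$, and both are unique because $\Phi$ and $\Phi_r$ are $\ell$-weakly convex. The weak convexity holds since each $f(\cdot;\y)$ is $\ell$-smooth, so a pointwise maximum of $\ell$-weakly convex functions plus $\iota_{\cX}$ is $\ell$-weakly convex. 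Hence the claim reduces to showing
\[
\norm{\hat\x-\x^\star}\le D_{\cY}\sqrt{r_{\y}/(2\ell)}.
\]

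The key observation is a uniform sandwich between $\Phi$ and $\Phi_r$. Since $\bz\in\cY$ and $\diam(\cY)\le D_{\cY}$, every $\y\in\cY$ satisfies $\norm{\y}\le D_{\cY}$. This gives
\[
\Phi(\x)-\tfrac{r_{\y}}{2}D_{\cY}^2\le\Phi_r(\x)\le\Phi(\x)
\]
for all $\x\in\cX$. I would like the constant $D_{\cY}^2/2$ rather than $D_{\cY}^2$, but a cruder constant only changes the factor; to obtain exactly $\sqrt{2\ell r_{\y}}$ one needs the gap to be at most $r_{\y}D_{\cY}^2/2$, which this bound provides.

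Next, set $g(\x):=\Phi(\x)+\ell\norm{\x-\z}^2$ and $g_r(\x):=\Phi_r(\x)+\ell\norm{\x-\z}^2$. Both are $\ell$-strongly convex, since $\ell$-weak convexity is offset by the modulus-$2\ell$ quadratic. Quadratic growth at the minimizers gives
\[
g(\x^\star)-g(\hat\x)\ge\tfrac{\ell}{2}\norm{\x^\star-\hat\x}^2
\quad\text{and}\quad
g_r(\hat\x)-g_r(\x^\star)\ge\tfrac{\ell}{2}\norm{\x^\star-\hat\x}^2.
\]
Adding the two inequalities yields
\[
\ell\norm{\x^\star-\hat\x}^2\le[\Phi(\x^\star)-\Phi_r(\x^\star)]-[\Phi(\hat\x)-\Phi_r(\hat\x)]\le\tfrac{r_{\y}}{2}D_{\cY}^2,
\]
where the last step uses the sandwich, with the first bracket at most $r_{\y}D_{\cY}^2/2$ and the second at least $0$. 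Therefore $\norm{\x^\star-\hat\x}\le D_{\cY}\sqrt{r_{\y}/(2\ell)}$, and multiplying by $2\ell$ gives exactly $D_{\cY}\sqrt{2\ell r_{\y}}$.

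The only delicate points are justifying the strong convexity modulus $\ell$ of $g$ and $g_r$ on $\cX$ (which is routine) and the norm bound $\norm{\y}\le D_{\cY}$. The latter relies on $\bz\in\cY$, which is part of Definition~\ref{def:function-class}. Without that assumption, I would recenter $\cY$ at a point $\y_0\in\cY$ and use the regularizer $\norm{\y-\y_0}^2$. No other obstacle is expected.
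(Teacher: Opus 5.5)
Your proposal is correct and follows essentially the same argument as the paper. The paper also bounds $0\le\Phi-\Phi_r\le\frac{r_{\y}}{2}D_{\cY}^2$ using $\bz\in\cY$. It then applies $\ell$-strong convexity of the two proximal objectives at each other's minimizers and adds the inequalities to obtain $\ell\norm{\x^\star-\x_r^\star}^2\le\frac{r_{\y}}{2}D_{\cY}^2$, before multiplying by $2\ell$.
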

\begin{proof}[Proof of Lemma~\ref{lem:smoothing}]
Since $\mathbf{0}\in\cY$ and $\operatorname{diam}(\cY)\le D_{\cY}$,
we have $\norm{\y}\le D_{\cY}$ for every $\y\in\cY$. Consequently,
for every $\x\in\cX$,
\begin{equation}
  0\le\Phi(\x)-\Phi_r(\x)\le\frac{r_{\y}}{2}D_{\cY}^2.
  \label{eq:value-bias}
\end{equation}
Let $\x^\star:=\prox_{\Phi/(2\ell)}(\z)$ and
$\x_r^\star:=\prox_{\Phi_r/(2\ell)}(\z)$.
Both $\Phi$ and $\Phi_r$ are $\ell$-weakly convex, so
$\x\mapsto\Phi(\x)+\ell\norm{\x-\z}^2$ and
$\x\mapsto\Phi_r(\x)+\ell\norm{\x-\z}^2$ are
$\ell$-strongly convex. Applying strong convexity to each function at
the minimizer of the other and adding the resulting inequalities gives
\begin{align*}
  \ell\norm*{\x^\star-\x_r^\star}^2
  &\le[\Phi(\x_r^\star)-\Phi_r(\x_r^\star)]
       -[\Phi(\x^\star)-\Phi_r(\x^\star)]
   \le\frac{r_{\y}}{2}D_{\cY}^2.
\end{align*}
Consequently,
\[
  \norm*{\nabla\Phi_{\frac{1}{2\ell}}(\z)-\nabla p_r(\z)}
  =2\ell\norm*{\x^\star-\x_r^\star}
  \le D_{\cY}\sqrt{2\ell r_{\y}}.
\]
We finish the proof.
\end{proof}

For a target accuracy $\epsilon>0$, set
% $r_{\y,\epsilon}
%   :=\min\{{\ell}/{8},
%                {\epsilon^2}/(8\ell D_{\cY}^2)\}$.
\begin{equation}
  r_{\y,\epsilon}
  :=\min\left\{\frac{\ell}{8},
               \frac{\epsilon^2}{8\ell D_{\cY}^2}\right\}.
  \label{eq:dual_reg}
\end{equation}
If $0<r_{\y}\le r_{\y,\epsilon}$, Lemma~\ref{lem:smoothing} bounds
the perturbation error by $\epsilon/2$.
It therefore suffices to find $\z\in\cX$ satisfying
$\norm{\nabla p_r(\z)}\le\epsilon/2$.

\subsection{Approximate proximal points from \FOAM{}}
\label{subsec:foam-proximal} 
To find a point with small $\norm{\nabla p_r}$, we approximately
implement gradient descent on $p_r$.
By~\eqref{eq:moreau1}, an exact gradient step with stepsize
$\frac{1}{2\ell}$ satisfies
$
  \z-\frac{1}{2\ell}\nabla p_r(\z)
  =\x_{\z,r_{\y}}^\star.
$
Thus each step amounts to solving the SC-SC subproblem
\[
  \min_{\x\in\cX}\max_{\y\in\cY}F_r(\x,\z;\y).
\]
We approximately solve this subproblem using the accelerated
\FOAM{} method of \citet{kovalev2022first}, with
$\mu_x=\ell$ and $\mu_y=r_{\y}$.
Each call provides a primal approximation and an updated solver
state, which we retain for the next outer iteration.

For fixed $(\z,r_{\y})$, \FOAM{} uses a conjugate formulation
involving an auxiliary variable $\omegavec\in\R^m$ and the
original dual variable $\y\in\cY$.
To describe its iterates and their errors, we first define the
associated conjugate objective
\begin{align*}
  P_{\z,r_{\y}}(\omegavec,\y)
  &:=\frac{1}{2\ell}\norm*{\omegavec}^2
     +\frac{r_{\y}}{2}\norm*{\y}^2+\iota_{\cY}(\y)+\sup_{\x\in\cX}\left\{
    \ip*{\omegavec}{\x}-f(\x;\y)
    -\ell\norm*{\x-\z}^2+\frac{\ell}{2}\norm*{\x}^2
  \right\}.
\end{align*}
Let $(\omegavec_{\z,r_{\y}}^\star,\y_{\z,r_{\y}}^\star)$ denote
its unique minimizer, and write
$P_{\z,r_{\y}}^\star
:=\min_{\omegavec,\y}P_{\z,r_{\y}}(\omegavec,\y)$.
The conjugate and primal solutions satisfy
$
  \omegavec_{\z,r_{\y}}^\star=-\ell\x_{\z,r_{\y}}^\star.
$ 

The accelerated recursion maintains two pairs of iterates for
this conjugate problem: a slow pair $(\omegavec,\y)$ and a fast
pair $(\omegavec_f,\y_f)$.
Both pairs target the minimizer
$(\omegavec_{\z,r_{\y}}^\star,\y_{\z,r_{\y}}^\star)$ and are
retained in the state
$
  S=(\omegavec,\y,\omegavec_f,\y_f).
$
The slow dual variable $\y$ need not belong to $\cY$.
The fast pair has finite value under $P_{\z,r_{\y}}$, so
$\y_f\in\cY$.
All calls to the first-order saddle oracle are made at feasible
points in $\cX\times\cY$; see
Appendix~\ref{app:foam-implementation}.

The fast pair provides the primal approximation
\[
  \z^+:=\proj_{\cX}(-\omegavec_f/\ell).
\]
We use a Lyapunov function to measure the error in both pairs
and bound the distance from $\z^+$ to $\x_{\z,r_{\y}}^\star$.
Set $\alpha_{r_{\y}}:=\sqrt{8r_{\y}/\ell}\le1$.
We use $\alpha_{r_{\y}}$ times the Lyapunov function in
\citet[Supplement, Lemma~7]{kovalev2022first}, specialized to
$F_r(\cdot,\z;\cdot)$:
\begin{equation}
\begin{aligned}
  \mathcal{L}_{\z,r_{\y}}(S)
  &:=\frac{2\alpha_{r_{\y}}}{\ell}
      \norm*{\omegavec-\omegavec_{\z,r_{\y}}^\star}^2
     +2r_{\y}\norm*{\y-\y_{\z,r_{\y}}^\star}^2+2\left(
    P_{\z,r_{\y}}(\omegavec_f,\y_f)-P_{\z,r_{\y}}^\star
  \right).
\end{aligned}
  \label{eq:tracking-energy}
\end{equation}
The first two terms measure the error of the slow pair
$(\omegavec,\y)$, while the last term measures the suboptimality
of the fast pair $(\omegavec_f,\y_f)$ in the conjugate objective.

For fixed $(\z,r_{\y})$, a call to \FOAM{} takes an initial state
$S$ with $\mathcal{L}_{\z,r_{\y}}(S)<\infty$ and a target
error reduction factor $\rho\in(0,1)$.
It runs a prescribed number of iterations and returns
\[
  S^+=(\omegavec^+,\y^+,\omegavec_f^+,\y_f^+)
  :=\FOAM_{\z,r_{\y}}(S;\rho).
\]
The corresponding primal output is
$\proj_{\cX}(-\omegavec_f^+/\ell)$.
The iteration count is chosen to reduce
$\mathcal{L}_{\z,r_{\y}}$ by a factor of $\rho$.
The following lemma gives the primal approximation error,
the reduction in the Lyapunov function, and the oracle cost.

\begin{lemma}[\FOAM{} contraction and primal approximation bound]
\label{prop:foam-interface}
For every $\z\in\cX$, $0<r_{\y}\le\ell/8$, and every state $S$ with
$\mathcal{L}_{\z,r_{\y}}(S)<\infty$,
\begin{equation}
  \ell\norm*{
    \proj_{\cX}\left(-\frac{\omegavec_f}{\ell}\right)
    -\x_{\z,r_{\y}}^\star
  }^2
  \le\mathcal{L}_{\z,r_{\y}}(S).
  \label{eq:interface-primal-error}
\end{equation}
Moreover, for every $\rho\in(0,1)$,
\begin{equation}
  \mathcal{L}_{\z,r_{\y}}\left(\FOAM_{\z,r_{\y}}(S;\rho)\right)
  \le\rho\,\mathcal{L}_{\z,r_{\y}}(S),
  \label{eq:block-contraction}
\end{equation}
and the call requires
$\cO(1+\sqrt{\ell/r_{\y}}\log(1/\rho))$
first-order saddle-oracle calls.
\end{lemma}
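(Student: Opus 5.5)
The lemma has two parts: a primal error bound and a contraction-plus-cost statement. I will treat them separately.

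\textbf{Primal error bound \eqref{eq:interface-primal-error}.} This part uses only the third term of $\mathcal{L}_{\z,r_{\y}}$. I will show that $P_{\z,r_{\y}}$ is $(1/\ell)$-strongly convex in $\omegavec$, uniformly in $\y$. The key observation is that for fixed $\y\in\cY$ the function
\[
  g_{\y}(\x):=f(\x;\y)+\ell\norm{\x-\z}^2-\tfrac{\ell}{2}\norm{\x}^2+\iota_{\cX}(\x)
\]
is $\ell$-strongly convex, since $f(\cdot;\y)$ is $\ell$-weakly convex and the quadratic part has Hessian $\ell\bI$.

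By duality, $g_{\y}^*$ is convex and $(1/\ell)$-smooth. Hence $\omegavec\mapsto\frac{1}{2\ell}\norm{\omegavec}^2+g_{\y}^*(\omegavec)$ is $(1/\ell)$-strongly convex, and a supremum over $\y$ preserves this. More precisely, the joint function is convex–concave-derived: it is convex in $(\omegavec,\y)$, which I would verify via the standard \FOAM{} reformulation, and it contains the strongly convex term $\frac{1}{2\ell}\norm{\omegavec}^2$.

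Applying partial minimization in $\y$ and strong convexity in $\omegavec$ at the minimizer then gives
\[
  P_{\z,r_{\y}}(\omegavec_f,\y_f)-P^\star\ge \tfrac{1}{2\ell}\norm{\omegavec_f-\omegavec^\star}^2 .
\]
Using $\omegavec^\star=-\ell\x^\star$ and $\x^\star\in\cX$, nonexpansiveness of $\proj_{\cX}$ yields
\[
  \ell\norm{\proj_{\cX}(-\omegavec_f/\ell)-\x^\star}^2\le \tfrac1\ell\norm{\omegavec_f-\omegavec^\star}^2\le 2(P(\omegavec_f,\y_f)-P^\star)\le\mathcal L .
\]
The constants need care at this step. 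If convexity in $\y$ fails to be available jointly, the fallback is to apply strong convexity to $\omegavec\mapsto P(\omegavec,\y_f)$ and compare against $\min_{\omegavec}P(\omegavec,\y_f)\ge P^\star$. However, that only controls the distance to the $\y_f$-dependent minimizer. So I would instead rely on the joint convexity of $P$, which is exactly the structure \FOAM{} exploits.

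\textbf{Contraction \eqref{eq:block-contraction} and oracle cost.} I will invoke \citet[Supplement, Lemma~7]{kovalev2022first}. With $\mu_x=\ell$, $\mu_y=r_{\y}$, and smoothness of order $\ell$, it gives a per-iteration decrease of their Lyapunov function by a factor $(1+c\sqrt{r_{\y}/\ell})^{-1}$ for a numerical $c$. Our $\mathcal{L}$ is a fixed positive multiple ($\alpha_{r_{\y}}$) of theirs, so the ratio is unchanged.

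Running $K=\lceil C\sqrt{\ell/r_{\y}}\log(1/\rho)\rceil$ iterations therefore gives the factor $\rho$. Each iteration of \FOAM{} uses $O(1)$ gradient evaluations, except for the inner computation of the conjugate gradient, i.e.\ the maximizer defining $\nabla g_{\y}^*$.

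The main obstacle is this last point. \FOAM{} evaluates $\nabla g^*_{\y}(\omegavec)$ only approximately, via an inner accelerated/projected gradient loop on the strongly convex $g_{\y}(\x)-\ip{\omegavec}{\x}$. For that loop, $\ell$-strong convexity and $O(\ell)$-smoothness give a condition number $O(1)$, so $O(1)$ steps reach a fixed relative accuracy per outer step. I would follow Kovalev–Gasnikov's inexact analysis, where errors are controlled relative to the current iterate distance so that no $\epsilon$-dependent logarithm appears. This is where the paper's Appendix~\ref{app:foam-implementation} would be relied on, and also where feasibility of queries ($\x\in\cX$ via projection, $\y_f\in\cY$) is checked.

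Summing over the iterations gives the stated cost $\cO(1+\sqrt{\ell/r_{\y}}\log(1/\rho))$, where the additive $1$ accounts for the ceiling.
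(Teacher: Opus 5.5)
Your primal-error argument and your use of \citet[Lemma~7]{kovalev2022first} for the per-step contraction match the paper's proof. For the former you use joint convexity of $P_{\z,r_{\y}}$ with its explicit $\frac{1}{2\ell}\norm{\omegavec}^2$ term, then nonexpansiveness of $\proj_{\cX}$ and $\omegavec^\star_{\z,r_{\y}}=-\ell\x^\star_{\z,r_{\y}}$.

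The gap is in the oracle-cost step, which rests on a false claim. Your $g_{\y}$ is $\widehat F_{\z}(\cdot;\y)+\iota_{\cX}$, and $\nabla^2_{\x\x}\widehat F_{\z}=\nabla^2_{\x\x}f+2\ell\bI-\ell\bI\succeq 0$. The net quadratic only offsets the $\ell$-weak convexity of $f$, so $g_{\y}$ is merely convex, not $\ell$-strongly convex. For instance, $f(\x;\y)=-\frac{\ell}{2}\norm{\x}^2$ makes $\widehat F_{\z}(\cdot;\y)$ affine. Then $g_{\y}^*$ is a shifted support function of $\cX$: it is not $(1/\ell)$-smooth, and it is not even finite when $\cX=\R^m$. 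So $\nabla g_{\y}^*(\omegavec)$ need not exist, and the fixed-$\y$ problem $\min_{\x}\{g_{\y}(\x)-\ip{\omegavec}{\x}\}$ has no bounded condition number. The inner loop you describe therefore gives no $\cO(1)$ bound. In your primal part the same error is harmless, since you ultimately rely only on the $\frac{1}{2\ell}\norm{\omegavec}^2$ term.

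The missing idea is that a \FOAM{} step never computes a conjugate gradient. It approximately solves the relative-prox problem, a joint saddle problem in $(\x,\y)$ regularized around the coupling point $(\omegavec_g,\y_g)$. The well-conditioning comes from the terms $\frac{\ell}{2}(\x-\ell^{-1}\omegavec_g)$ and $r_{\y}\y+\gamma^{-1}(\y-\y_g)$ in the operator $A_g$. With $\gamma=8/\ell$, $A_g$ is $1$-strongly monotone and $32$-Lipschitz. Projected gradient therefore contracts at a numerical rate and meets the relative stopping test \eqref{eq:app-rprox-certificate} after a constant number of feasible queries (Lemma~\ref{lem:app-feasible-rprox}).

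That certificate, \citet[Condition~(18)]{kovalev2022first}, is exactly the hypothesis of their Lemma~7. It is what your inner routine must deliver for the contraction to apply at all. You should also state, as the paper does, that the one-step recursion holds for an arbitrary state with finite Lyapunov value. In such states the slow and fast pairs differ and the slow dual iterate may lie outside $\cY$. The lemma is later applied to translated and retained states of exactly this kind.
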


The proof of Lemma~\ref{prop:foam-interface}, the iteration count, and the implementation using
feasible oracle queries are given in
Appendix~\ref{app:foam-details}.
The call does not require evaluating
$\mathcal{L}_{\z,r_{\y}}$, which depends on the unknown minimizer
and optimal value of $P_{\z,r_{\y}}$.
For a fixed numerical $\rho$, reducing the error by this factor
requires $\cO(\sqrt{\ell/r_{\y}})$ oracle calls.

These guarantees apply to a fixed subproblem.
In the next subsection, we show how to update the state when
$\z$ changes and control its error for the new subproblem.

\subsection{Tracking across successive subproblems}
\label{subsec:stateful-foam}
% \Siyu{We shud maintain either $z', \widetilde S$ or $\widetilde z, \widetilde S$ in this subsection.}

Each outer iteration changes the proximal center and hence the
SC-SC subproblem. To reuse the previous \FOAM{} state, we first
examine how this change affects the subproblem.

Fix $0<r_{\y}\le\ell/8$ in this subsection and consider two proximal centers
$\z,\z'\in\cX$. Write $\bm d:=\z'-\z$ for their difference.
Expanding the proximal term gives
\begin{equation}
  F_r(\x,\z';\y)
  =F_r(\x,\z;\y)-2\ell\ip*{\bm d}{\x}
    +\ell\bigl(\norm*{\z'}^2-\norm*{\z}^2\bigr).
  \label{eq:anchor-linear-tilt}
\end{equation}
The last term is independent of $(\x,\y)$.
In the supremum defining $P_{\z',r_{\y}}$, the resulting linear
term is canceled by replacing $\omegavec$ with
$\omegavec-2\ell\bm d$.
We therefore translate both conjugate variables in the retained state:
\begin{equation}
   \widetilde S
  :=\left(\omegavec-2\ell\bm d,\,\y,\,
          \omegavec_f-2\ell\bm d,\,\y_f\right).
  \label{eq:translated-state}
\end{equation}
The following proposition bounds the error of this state for the
new subproblem.
\begin{proposition}
\label{prop:anchor-shift}
For every $\z,\z'\in\cX$ with $\z'=\z+\bm d$ and every state
$S$ with $\mathcal{L}_{\z,r_{\y}}(S)<\infty$, the translated
state in~\eqref{eq:translated-state} satisfies
\begin{equation}
  \mathcal{L}_{\z',r_{\y}}( \widetilde S)
  \le2\mathcal{L}_{\z,r_{\y}}(S)+24\ell\norm*{\bm d}^2.
  \label{eq:anchor-tracking}
\end{equation}
Consequently, for every $\rho\in(0,1)$,
\begin{equation}
  \mathcal{L}_{\z',r_{\y}}
  \left(\FOAM_{\z',r_{\y}}( \widetilde S;\rho)\right)
  \le\rho\left(2\mathcal{L}_{\z,r_{\y}}(S)
                  +24\ell\norm*{\bm d}^2\right).
  \label{eq:interface-tracked-block}
\end{equation}
\end{proposition}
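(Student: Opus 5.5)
Write $G_{\z}(\omegavec,\y):=\sup_{\x\in\cX}\{\ip*{\omegavec}{\x}-f(\x;\y)-\ell\norm{\x-\z}^2+\frac{\ell}{2}\norm{\x}^2\}$ for the supremum term in $P_{\z,r_{\y}}$. The plan rests on the exact identity behind \eqref{eq:anchor-linear-tilt}. Substituting $-\ell\norm{\x-\z'}^2=-\ell\norm{\x-\z}^2+2\ell\ip*{\bm d}{\x}-\ell(\norm{\z'}^2-\norm{\z}^2)$ gives
\[
G_{\z'}(\omegavec-2\ell\bm d,\y)=G_{\z}(\omegavec,\y)-\ell\bigl(\norm{\z'}^2-\norm{\z}^2\bigr).
\]
Setting $\bm\delta:=2\ell\bm d$ and $c:=\ell(\norm{\z'}^2-\norm{\z}^2)$, this yields
\[
P_{\z',r_{\y}}(\omegavec-\bm\delta,\y)=P_{\z,r_{\y}}(\omegavec,\y)+\frac{1}{2\ell}\bigl(\norm{\omegavec-\bm\delta}^2-\norm{\omegavec}^2\bigr)-c.
\]
In other words, the new conjugate objective is the old one translated by $\bm\delta$, plus a linear function of $\omegavec$ and a constant. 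This is exactly why $\widetilde S$ has finite Lyapunov value: $\y_f\in\cY$ is unchanged, and the $\omegavec$-part remains finite.

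Next I will compare the minimizers. The dual part is easy. The saddle point $(\x^\star_{\z',r_{\y}},\y^\star_{\z',r_{\y}})$ of the SC-SC problem changes Lipschitz-continuously in $\z$. Perturbing $\z$ adds the linear term $-2\ell\ip*{\bm d}{\x}$, a gradient perturbation of size $2\ell\norm{\bm d}$ in $\x$ only. By the standard saddle-point stability estimate (monotonicity of the strongly monotone operator with moduli $\ell,r_{\y}$), we get
\[
\ell\norm{\Delta\x}^2+r_{\y}\norm{\Delta\y}^2\le 2\ell\norm{\bm d}\norm{\Delta\x},
\]
hence $\norm{\Delta\x}\le 2\norm{\bm d}$ and $r_{\y}\norm{\Delta\y}^2\le 4\ell\norm{\bm d}^2$. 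Using $\omegavec^\star=-\ell\x^\star$, I then compute
\[
\omegavec^\star_{\z',r_{\y}}-(\omegavec^\star_{\z,r_{\y}}-\bm\delta)=-\ell\Delta\x+2\ell\bm d,
\]
whose norm is at most $4\ell\norm{\bm d}$.

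With these in hand I bound the three terms of $\mathcal L_{\z',r_{\y}}(\widetilde S)$ separately, each via $\norm{u+v}^2\le 2\norm u^2+2\norm v^2$.
\begin{itemize}
\item[(i)] The slow-$\omegavec$ term is at most $2\cdot\frac{2\alpha}{\ell}\norm{\omegavec-\omegavec^\star}^2+\frac{4\alpha}{\ell}\cdot16\ell^2\norm{\bm d}^2$.
\item[(ii)] The slow-$\y$ term is at most $2\cdot 2r_{\y}\norm{\y-\y^\star}^2+4r_{\y}\norm{\Delta\y}^2$, and the last piece is at most $16\ell\norm{\bm d}^2$.
\item[(iii)] For the fast term, the displayed identity gives
\[
P_{\z'}(\omegavec_f-\bm\delta,\y_f)-P^\star_{\z'}=\bigl[P_{\z}(\omegavec_f,\y_f)-P^\star_{\z}\bigr]+\bigl[\lambda(\omegavec_f)-\lambda(\omegavec^\star_{\z,r_{\y}})\bigr]+\bigl[P_{\z'}(\omegavec^\star_{\z}-\bm\delta,\y^\star_{\z})-P^\star_{\z'}\bigr],
\]
where $\lambda(\omegavec):=-\ip*{\omegavec}{\bm d}$ is linear.
\end{itemize}
The linear difference $\ip*{\omegavec^\star-\omegavec_f}{\bm d}$ is controlled by Young's inequality. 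The key input is that $P_{\z}$ is $\frac1\ell$-strongly convex in $\omegavec$: the $\frac{1}{2\ell}\norm{\omegavec}^2$ term is $\frac1\ell$-strongly convex, and $G_{\z}$ is convex in $\omegavec$ because it is a supremum of functions affine in $\omegavec$. This strong convexity gives
\[
P_{\z}(\omegavec_f,\y_f)-P^\star_{\z}\ge\frac{1}{2\ell}\norm{\omegavec_f-\omegavec^\star}^2,
\]
so that $2\norm{\omegavec_f-\omegavec^\star}\norm{\bm d}\le\frac{1}{2\ell}\norm{\omegavec_f-\omegavec^\star}^2+2\ell\norm{\bm d}^2$.

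The last bracket is the suboptimality, for the new problem, of the translated old minimizer. I plan to bound it by smoothness-type control: $P_{\z'}$ restricted to $\omegavec$ is $\frac{2}{\ell}$-smooth, since $G$ is the conjugate of an $\frac{\ell}{2}$-strongly convex function of $\x$. I will use the first-order optimality of $(\omegavec^\star_{\z'},\y^\star_{\z'})$ together with the already established bounds on $\Delta\y$ and on the $\omegavec$-shift, yielding a bound of order $\ell\norm{\bm d}^2$.

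The main obstacle is this last step, because $P$ is nonsmooth in $\y$ (it contains the indicator $\iota_{\cY}$) and need not be smooth jointly. If the direct smoothness argument fails, I would go back to the primal: use the saddle-gap representation of $P-P^\star$ in terms of $F_r$, and invoke the joint $\ell$-smoothness of $f$ together with $\norm{\Delta\x}\le2\norm{\bm d}$. Summing all pieces and absorbing the constants (using $\alpha\le1$) should give the factor $2$ on $\mathcal L_{\z,r_{\y}}(S)$ and a constant like $24\ell\norm{\bm d}^2$.

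Finally, \eqref{eq:interface-tracked-block} follows immediately by applying \eqref{eq:block-contraction} of Lemma~\ref{prop:foam-interface} at center $\z'$ to $\widetilde S$, which is admissible since $\mathcal L_{\z',r_{\y}}(\widetilde S)<\infty$ by \eqref{eq:anchor-tracking}.
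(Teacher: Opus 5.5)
Your translation identity, the saddle-point stability estimate, the three-way split of the fast gap, the Young step, and the final appeal to \eqref{eq:block-contraction} at $\z'$ are all sound. (Your $\lambda$ should be $-2\ip{\cdot}{\bm d}$, which your Young step already assumes.) The genuine gap is the last bracket $P_{\z',r_{\y}}(\omegavec^\star_{\z,r_{\y}}-2\ell\bm d,\y^\star_{\z,r_{\y}})-P^\star_{\z',r_{\y}}$. You leave it open, and the route you propose rests on a false claim.

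The function $\x\mapsto f(\x;\y)+\ell\norm{\x-\z}^2-\frac{\ell}{2}\norm{\x}^2$ is only convex, not $\frac{\ell}{2}$-strongly convex. The proximal term gives curvature $2\ell$, the subtracted quadratic removes $\ell$, and $\ell$-weak convexity of $f$ may remove the rest; the appendix notes that $\widehat F_{\z}$ is merely convex in $\x$. For instance, with $f(\x;\y)=-\frac{\ell}{2}\norm{\x}^2$ and $\cX$ a ball, the supremum term equals $\sigma_{\cX}(\omegavec+2\ell\z)-\ell\norm{\z}^2$, a shifted support function, which is nonsmooth in $\omegavec$. So there is no smoothness of $P_{\z',r_{\y}}$ in $\omegavec$ to exploit, and your primal fallback is never specified.

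The missing observation is that this bracket needs only strong convexity. Put $\bm u^\star:=(\omegavec^\star_{\z,r_{\y}},\y^\star_{\z,r_{\y}})$, $\bm h:=(2\bm d,\bz)$, $\mathbf H:=\operatorname{diag}(\ell^{-1}\bI_m,r_{\y}\bI_n)$, and $\widetilde P:=P_{\z,r_{\y}}-\ip{\bm h}{\cdot}$. Your identity shows that $\widetilde P(\omegavec,\y)$ equals $P_{\z',r_{\y}}(\omegavec-2\ell\bm d,\y)$ up to an additive constant, so the bracket equals $\widetilde P(\bm u^\star)-\min\widetilde P$. Subtract the tilt from the quadratic minorant $P_{\z,r_{\y}}(\bm u)\ge P^\star_{\z,r_{\y}}+\frac12\norm{\bm u-\bm u^\star}_{\mathbf H}^2$ and minimize. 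This gives $\min\widetilde P\ge\widetilde P(\bm u^\star)-\frac12\norm{\bm h}_{\mathbf H^{-1}}^2$, so the bracket is at most $2\ell\norm{\bm d}^2$. This is exactly the paper's argument, and it works despite the indicator $\iota_{\cY}$ and any nonsmoothness in $\omegavec$.

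There is also a constant problem. You bound $\norm{\Delta\x}\le2\norm{\bm d}$ and $r_{\y}\norm{\Delta\y}^2\le4\ell\norm{\bm d}^2$ separately, then bound the $\omegavec$-shift by the triangle inequality. That makes your slow terms pick up $64\alpha_{r_{\y}}\ell\norm{\bm d}^2+16\ell\norm{\bm d}^2$. With the fast term contributing $8\ell\norm{\bm d}^2$, you get $(64\alpha_{r_{\y}}+24)\ell\norm{\bm d}^2$, which is $88\ell\norm{\bm d}^2$ at $r_{\y}=\ell/8$, not $24\ell\norm{\bm d}^2$. Since $24$ is hard-wired into the budget update \eqref{eq:outer-budget-update}, this matters. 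Instead, bound the shift of the minimizer jointly:
\[
\frac{1}{\ell}\norm{\omegavec^\star_{\z',r_{\y}}+2\ell\bm d-\omegavec^\star_{\z,r_{\y}}}^2+r_{\y}\norm{\Delta\y}^2\le4\ell\norm{\bm d}^2 .
\]
One way is to use your own inequality $\ell\norm{\Delta\x}^2+r_{\y}\norm{\Delta\y}^2\le2\ell\ip{\bm d}{\Delta\x}$ and expand $\ell\norm{2\bm d-\Delta\x}^2$; the leftover term $-2\ell\ip{\bm d}{\Delta\x}$ is nonpositive. Alternatively, use strong convexity of $P_{\z,r_{\y}}$ in $\norm{\cdot}_{\mathbf H}$ at the minimizers of $P_{\z,r_{\y}}$ and $\widetilde P$. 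Then $\alpha_{r_{\y}}\le1$ and $\norm{a+b}^2\le2\norm{a}^2+2\norm{b}^2$ give $16\ell\norm{\bm d}^2$ for the slow terms, and $16+8=24$.
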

The proof of Proposition~\ref{prop:anchor-shift} is given in Section~\ref{subsec:upper-stability}.
This estimate allows us to bound the error for the new subproblem
using the previous error bound and the displacement $\bm d$.
We use it to maintain a computable upper bound $B^t$ satisfying
$
  \mathcal{L}_{\z^t,r_{\y}}(S^t)\le B^t.
$ 
% \Siyu{Would it be better if we make it a displayed equation so that we could ref it?}

Suppose that this bound holds at iteration $t$, and write
$S^t=(\omegavec^t,\y^t,\omegavec_f^t,\y_f^t)$.
We first take the approximate proximal step provided by $S^t$:
\[
  \z^{t+1}:=\proj_{\cX}(-\omegavec_f^t/\ell),
  \quad \text{and} \quad
  \bm d^t:=\z^{t+1}-\z^t.
\]
To obtain an initial state for the subproblem at $\z^{t+1}$,
we translate $S^t$ as in~\eqref{eq:translated-state}:
\[
   \widetilde S^t
  :=\left(\omegavec^t-2\ell\bm d^t,\,\y^t,\,
          \omegavec_f^t-2\ell\bm d^t,\,\y_f^t\right).
\]
Starting from $ \widetilde S^t$, we run the prescribed number of
\FOAM{} iterations to reduce the Lyapunov function for this
subproblem by a factor of $1/400$, and denote the returned state by
$
  S^{t+1}
  :=\FOAM_{\z^{t+1},r_{\y}}
       ( \widetilde S^t;{1}/{400}).
$
We update this computable error bound accordingly:
\begin{equation}
  B^{t+1}:=\frac{1}{400}
  \left(2B^t+24\ell\norm*{\bm d^t}^2\right).
  \label{eq:outer-budget-update}
\end{equation}
Proposition~\ref{prop:anchor-shift} then gives
$
  \mathcal{L}_{\z^{t+1},r_{\y}}(S^{t+1})\le B^{t+1}.
$
Thus the same error bound is maintained at the next iteration,
using only $B^t$ and the computed displacement $\bm d^t$.

For the fixed reduction factor $1/400$,
Lemma~\ref{prop:foam-interface} gives an oracle cost of
$\cO(\sqrt{\ell/r_{\y}})$ per outer iteration.
The Lyapunov descent estimate in Section~\ref{sec:upper-proof}
justifies this choice of reduction factor.
\subsection{The \TrackedFOAM{} algorithm}
\label{subsec:main-upper} 
We now explain why the updates in
Section~\ref{subsec:stateful-foam} suffice to find an
$\epsilon$-OS point. The primal error bound in
Lemma~\ref{prop:foam-interface} and the bound maintained
in Section~\ref{subsec:stateful-foam} give
\[
  \ell\norm*{\z^{t+1}-\x_{\z^t,r_{\y}}^\star}^2
  \le\mathcal{L}_{\z^t,r_{\y}}(S^t)\le B^t.
\]
Using the $2\ell$-smoothness of $p_r$, we therefore obtain
\begin{align}
  p_r(\z^{t+1})-p_r(\z^t)
  &\le-\frac{\norm*{\nabla p_r(\z^t)}^2}{4\ell}
      +\ell\norm*{\z^{t+1}-\x_{\z^t,r_{\y}}^\star}^2\notag\\
  &\le-\frac{\norm*{\nabla p_r(\z^t)}^2}{4\ell}+B^t.\label{eq:inexact-envelope-descent}
\end{align}
The negative term is the decrease from an exact gradient step,
while $B^t$ bounds the additional error from approximately
solving the subproblem. To control its accumulated contribution, we
combine this inequality with the update of $B^t$
in~\eqref{eq:outer-budget-update} and define
\begin{equation}
    \label{eq:W}
  W^t:=p_r(\z^t)+2B^t.
\end{equation}
With the reduction factor $1/400$ used in the \FOAM{} calls,
combining \eqref{eq:inexact-envelope-descent} with
\eqref{eq:outer-budget-update} yields
\[
  W^{t+1}-W^t
  \le-\frac{3}{4}\left(
    \frac{\norm*{\nabla p_r(\z^t)}^2}{4\ell}+B^t
  \right).
\]
This estimate controls both the gradient norm of $p_r$ and the
error from approximately solving the subproblems.
Thus reducing the \FOAM{} Lyapunov function by a factor of
$1/400$ at each outer iteration suffices to obtain a small
gradient norm. By Lemma~\ref{prop:foam-interface}, each
call costs $\cO(\sqrt{\ell/r_{\y}})$ oracle queries.
The proof of the descent estimate is given in
Lemma~\ref{lem:coupled-descent}.
% \Siyu{Does it refer to \eqref{eq:W}? If so , maybe we shud ref it since it will cause confusion.}

The iteration bound depends on the initial Lyapunov gap
$W^0-\inf p_r$, which includes the initial error bound $2B^0$.
We therefore initialize \FOAM{} with an error bound $B^0$
of order $\Delta+r_{\y}D_{\cY}^2$.
Keeping $\z^0=\mathbf{0}$ fixed, we start with
$r_{\y}=\ell/8$ and successively divide $r_{\y}$ by four.
After each decrease, we run \FOAM{} from the state obtained
at the preceding level. We stop at the first level satisfying
$r_{\y}\le r_{\y,\epsilon}$, where $r_{\y,\epsilon}$ is
defined in~\eqref{eq:dual_reg}.
The following proposition bounds the error of the resulting
state and the total initialization cost.

\begin{proposition}[Warm-start initialization]
\label{prop:warm-start}
Let $\z^0=\mathbf{0}$. The initialization constructs a dual
regularization parameter $r_{\y}$ and a \FOAM{} state $S^0$
satisfying
\begin{align}
  \mathcal{L}_{\z^0,r_{\y}}(S^0)
    \le15\left(\Delta+r_{\y}D_{\cY}^2\right) \quad \mbox{and} \quad
   \frac{r_{\y,\epsilon}}{4}<r_{\y}\le r_{\y,\epsilon}.
    \label{eq:warm-curvature-bound}
\end{align}
The construction requires $\cO(\sqrt{\ell/r_{\y}})$ first-order
saddle-oracle calls.
\end{proposition}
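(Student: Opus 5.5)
Let $r_k:=\ell/(8\cdot4^k)$ for $k=0,1,\dots,K$, where $K$ is the first index with $r_K\le r_{\y,\epsilon}$. Since $r_{\y,\epsilon}\le\ell/8$, $K$ is well defined, and minimality of $K$ gives $r_{\y,\epsilon}/4<r_K\le r_{\y,\epsilon}$ (when $K=0$, $r_0=\ell/8=r_{\y,\epsilon}$). This is the second claim in~\eqref{eq:warm-curvature-bound}. The plan has three parts: a crude bound on the Lyapunov function of a trivial state, a bound on how $\mathcal{L}_{\bz,r}$ changes when $r$ is divided by four, and a geometric sum for the oracle cost.

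\textbf{Step 1 (base level).} Take a trivial feasible state at level $r_0$, e.g.\ $S=(\bz,\bz,\bz,\bz)$, which has $\y_f=\bz\in\cY$ and hence $P_{\bz,r_0}(\bz,\bz)<\infty$. Bound each term of~\eqref{eq:tracking-energy}. The duality gap $P_{\bz,r_0}(\bz,\bz)-P^\star_{\bz,r_0}$ should be bounded by $\Phi(\bz)-\min_{\x}\{\Phi_{r_0}(\x)+\ell\norm{\x}^2\}$ plus $\tfrac{r_0}{2}D_{\cY}^2$. This uses weak duality together with $\Phi-\Phi_r\le \tfrac{r}{2}D_{\cY}^2$ from~\eqref{eq:value-bias}, and the whole quantity is at most $\Delta+r_0D_{\cY}^2$. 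For the slow-pair terms I would use the identity $\omegavec^\star=-\ell\x^\star$. The proximal point satisfies $\ell\norm{\x^\star}^2\le\Phi(\bz)-\inf\Phi+\tfrac{r}{2}D_{\cY}^2$, and $2r\norm{\y^\star}^2\le 2rD_{\cY}^2$. Adding these gives $\mathcal{L}_{\bz,r_0}(S)\le c(\Delta+r_0D_{\cY}^2)$ for a small constant $c$. One call $S^{(0)}:=\FOAM_{\bz,r_0}(S;1/400)$ then makes the bound even smaller.

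\textbf{Step 2 (level change, the main obstacle).} I need an analogue of Proposition~\ref{prop:anchor-shift} for changing $r$ instead of $\z$: with the same state $S$,
\[
\mathcal{L}_{\bz,r/4}(S)\le C_1\,\mathcal{L}_{\bz,r}(S)+C_2\,(\Delta+rD_{\cY}^2).
\]
The three terms of~\eqref{eq:tracking-energy} behave as follows.
\begin{itemize}
\item The weight $2\alpha_{r}/\ell$ only decreases as $r$ shrinks.
\item The weight $2r$ on the dual error also decreases. The minimizer moves, but $\norm{\y-\y^\star_{r/4}}^2\le2\norm{\y-\y^\star_r}^2+2\norm{\y^\star_r-\y^\star_{r/4}}^2$, and this extra error is paid as $(r/2)\cdot 2D_{\cY}^2$ after weighting.
\item $P_{\bz,r/4}\le P_{\bz,r}$ pointwise. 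I would lower-bound $P^\star_{\bz,r/4}\ge P^\star_{\bz,r}-\tfrac{r}{2}D_{\cY}^2$ using $\norm{\y}\le D_{\cY}$ on $\cY$.
\item For the $\omegavec$ term, the minimizers shift by $\ell\norm{\x^\star_r-\x^\star_{r/4}}$, which is bounded as in Lemma~\ref{lem:smoothing} by order $\ell\sqrt{rD_{\cY}^2/\ell}$. After weighting this gives $O(rD_{\cY}^2)$.
\end{itemize}
Choosing $\rho=1/400$ at each level then gives the recursion $\mathcal{L}_k\le \rho\bigl(C_1\mathcal{L}_{k-1}+C_2(\Delta+r_{k-1}D_{\cY}^2)\bigr)$. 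Since $r_{k-1}=4r_k$ and $\rho C_1<1$, induction yields $\mathcal{L}_k\le 15(\Delta+r_kD_{\cY}^2)$, provided the constants satisfy $\rho(15C_1+C_2)\cdot4\le15$. Tracking these constants so that exactly $15$ comes out is where I expect the real work to lie.

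\textbf{Step 3 (cost).} By Lemma~\ref{prop:foam-interface}, level $k$ costs $\cO(\sqrt{\ell/r_k})=\cO(2^k)$ oracle calls. Summing the geometric series over $k\le K$ gives $\cO(2^K)=\cO(\sqrt{\ell/r_K})$. Setting $S^0:=S^{(K)}$ and $r_{\y}:=r_K$ finishes the proof.
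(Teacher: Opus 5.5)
\textbf{Verdict: Steps 2 and 3 are fine, but Step 1 fails, so the induction never gets started.}

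Your Steps~2 and~3 are sound and essentially match the paper. Lemma~\ref{lem:curvature-transfer} is your level-change bound with $C_1=1$ and additive term $\frac{27}{4}r_{\y}D_{\cY}^2$. After that the constant $15$ closes with room to spare, and the cost is the same geometric sum.

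\textbf{Step 1 fails.} The zero state does \emph{not} have Lyapunov value $O(\Delta+r_0D_{\cY}^2)$. Its fast-pair term is
\[
  P_{\bz,r_0}(\bz,\bz)-P^\star_{\bz,r_0}
  =-\inf_{\x\in\cX}\Bigl\{f(\x;\bz)+\frac{\ell}{2}\norm{\x}^2\Bigr\}-P^\star_{\bz,r_0}.
\]
This measures how suboptimal the dual point $\y=\bz$ is for the inner maximization. Weak duality only bounds $P(\bz,\bz)$ from below. Nothing in $\cF(\ell,D_{\cY},\Delta)$ controls $\Phi-f(\cdot;\bz)$, which can be of order $\norm{\nabla_{\y}f(\bz;\bz)}D_{\cY}$.

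Here is a concrete counterexample. Let $\cX=\R$, $\cY=[-D_{\cY}/2,D_{\cY}/2]$, and $f(x;y)=gy$ with $g\ge\ell D_{\cY}$. Then $\Phi$ is constant, so $f\in\cF(\ell,D_{\cY},\Delta)$ for every $\Delta>0$. However, $P_{0,r_0}(\omega,y)=\omega^2/\ell+\frac{r_0}{2}y^2-gy+\iota_{\cY}(y)$ is minimized at $(0,D_{\cY}/2)$. Hence $2\bigl(P_{0,r_0}(0,0)-P^\star_{0,r_0}\bigr)=gD_{\cY}-\frac{r_0}{4}D_{\cY}^2$, which is unbounded as $g\to\infty$. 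This also refutes your claimed bound $\Phi(\bz)-\min_{\x}\{\Phi_{r_0}(\x)+\ell\norm{\x}^2\}+\frac{r_0}{2}D_{\cY}^2$, which equals $\frac58 r_0D_{\cY}^2$ in this example.

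One \FOAM{} call with $\rho=1/400$ only divides this gap by $400$. Driving it down to $O(\Delta+r_0D_{\cY}^2)$ would need about $\log\bigl(gD_{\cY}/(\Delta+\ell D_{\cY}^2)\bigr)$ extra iterations. That count is not uniform over the class, so the $\cO(\sqrt{\ell/r_{\y}})$ cost claim fails.

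\textbf{Missing idea.} Build the level-$0$ state from oracle information, so that its objective gap is controlled by \emph{distances} to the saddle point, which your slow-pair estimates already bound. The paper does this in Lemma~\ref{lem:startup-state}. It runs the relative-prox routine once from $(\omegavec_g,\y_g)=(\bz,\bz)$ and sets both pairs equal to its output $\bm u_f$. The certificate \eqref{eq:app-rprox-certificate} gives $\bm s\in\partial P_{\bz,r_0}(\bm u_f)$ with $\norm{\bm s}_{(\mathbf{H}^0)^{-1}}\le 2d_{0f}$. Here $d_0,d_f,d_{0f}$ are the $\mathbf{H}^0$-distances from the origin to $\bm u^\star$, from $\bm u_f$ to $\bm u^\star$, and from the origin to $\bm u_f$. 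Convexity then yields
\[
  P_{\bz,r_0}(\bm u_f)-P^\star_{\bz,r_0}\le\ip{\bm s}{\bm u_f-\bm u^\star}\le 2d_fd_{0f}\le\sqrt3\,d_0^2,
\]
with $d_0^2=\ell\norm{\x^\star}^2+r_0\norm{\y^\star}^2\le\Delta+\frac32r_0D_{\cY}^2$. This costs $O(1)$ oracle calls and gives $\mathcal{L}_{\bz,r_0}(\widehat S^0)\le5(\Delta+\frac32r_0D_{\cY}^2)$. With that base case, your Steps~2--3 complete the proof.
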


We set
$ B^0:=15\left(\Delta+r_{\y}D_{\cY}^2\right).
$
Proposition~\ref{prop:warm-start} ensures that
$\mathcal{L}_{\z^0,r_{\y}}(S^0)\le B^0$, providing the initial
error bound required by Section~\ref{subsec:stateful-foam}.
We then keep $r_{\y}$ fixed and apply the updates described there.
The full initialization and its proof are given in
Section~\ref{subsec:warm-start}.
The cost bounds for the successive initialization levels grow
geometrically, so their sum is dominated by the final level.
Hence initialization introduces no additional logarithmic factor.

To select the output, we use
\[
  Q^t:=\ell\norm*{\z^{t+1}-\z^t}^2+B^t.
\]
The first term measures the computed step, while the second
accounts for the error in approximating the exact proximal point.
Lemma~\ref{lem:observable-residual} shows that
\[
  \norm*{\nabla p_r(\z^t)}^2\le8\ell Q^t,
  \quad \text{and} \quad
  \min_{0\le t<T}Q^t
  \le\frac{4}{T}\left(W^0-\inf p_r\right).
\]
Thus a small value of $Q^t$ certifies a small gradient norm,
and the descent estimate guarantees that such an iterate exists.
We return the iterate with the smallest value of $Q^t$,
choosing the smallest index in case of a tie.
Algorithm~\ref{alg:tracked-foam-overview} gives the complete method.
\begin{algorithm}[h]
\small
\caption{\TrackedFOAM{}}
\label{alg:tracked-foam-overview}
\KwData{$f\in\cF(\ell,D_{\cY},\Delta)$, $\epsilon>0$, and
$\z^0=\mathbf{0}\in\cX$}

Set
$r_{\y,\epsilon}
=\min\{\ell/8,\epsilon^2/(8\ell D_{\cY}^2)\}$\;

Geometrically initialize
$r_{\y}\in(r_{\y,\epsilon}/4,r_{\y,\epsilon}]$ and a full \FOAM{} state
$S^0=(\omegavec^0,\y^0,\omegavec_f^0,\y_f^0)$ at $\z^0$ as in
Proposition~\ref{prop:warm-start}\;

Set $B^0=15(\Delta+r_{\y}D_{\cY}^2)$ and
$T=\lceil4000(\ell\Delta/\epsilon^2+1)\rceil$\;

\For{$t=0,\ldots,T-1$}{
  Set $\z^{t+1}=\proj_{\cX}(-\omegavec_f^t/\ell)$,
  $\bm d^t=\z^{t+1}-\z^t$, and
  $Q^t=\ell\norm{\bm d^t}^2+B^t$\;

  Set
  \[
    S^{t+1}=\FOAM_{\z^{t+1},r_{\y}}\left(
    \bigl(\omegavec^t-2\ell\bm d^t,\,\y^t,\,
          \omegavec_f^t-2\ell\bm d^t,\,\y_f^t\bigr);
    \frac{1}{400}\right);
  \]

  Set $B^{t+1}=\frac{1}{400}(2B^t+24\ell\norm{\bm d^t}^2)$\;
}

Set $t^\star=\min\bigl(\argminop_{0\le t<T}Q^t\bigr)$\;

\KwRet{$\z^{t^\star}$}\;
\end{algorithm}

\section{Proof of the Lower Bounds}
\label{sec:verification}

We verify Conditions~\ref{cond:unscaled}~\textup{(C1)}-\textup{(C4)}
for the instance constructed in Section~\ref{sec:construction}.
We then scale the instance and prove
Theorems~\ref{thm:zr-lower} and~\ref{thm:deterministic}.

\subsection{Smoothness and dual concavity}
\label{subsec:membership}

We begin with the properties required by
Condition~\ref{cond:unscaled}~\textup{(C1)}.

\begin{proposition}
\label{prop:function-class}
There is a numerical constant $\ell_0>0$, independent of $M,N,D$,
such that $\fbar$ is jointly $\ell_0$-smooth and
$N^{-2}$-strongly concave in $\y$.
For every $D>0$, the value function $\bar\Phi$ is differentiable.
Moreover, whenever $\norm{\y^\star(\bm{\nu})}\le D/2$,
\begin{equation}
  \bar\Phi(\s,\bm{\nu})=\bar\Phi_{\infty}(\s,\bm{\nu}),
  \qquad\mbox{and}\qquad
  \nabla\bar\Phi(\s,\bm{\nu})
  =\nabla\bar\Phi_{\infty}(\s,\bm{\nu}).
\label{eq:restricted-value-identity}
\end{equation}
Thus Condition~\ref{cond:unscaled}~\textup{(C1)} holds.
\end{proposition}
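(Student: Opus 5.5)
The argument splits \(\fbar\) into the inner-chain part \(\sum_i H(a_i,b_i;\y^{(i)})\) and the outer part \(G(\s,\bm\nu):=\sum_i[C_{\rm en}+C_{\rm ex}+C_{\rm st}+R](\cdot)\), which does not depend on \(\y\).

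\textbf{Smoothness.} By Lemma~\ref{lem:inner-interface}, each \(H(a_i,b_i;\y^{(i)})\) is jointly smooth with a constant \(\ell_H\) independent of \(N\). Distinct stages involve disjoint variables, so the Hessian of the sum is block diagonal. Its operator norm is therefore at most \(\ell_H\), independently of \(M\).

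For the outer part, I would check that \(q,e_{\s},e_{\bm\nu},p'\) and their first two derivatives are bounded, using Lemmas~\ref{lem:outer-gate} and~\ref{lem:identity-extensions}. Each coupling term is a product of such bounded functions, so it has a Hessian bounded by a numerical constant in its at most four variables \((s_{i-1},a_i,b_i,s_i)\). The same holds for \(R\), since \(p''\) is bounded. Each variable appears in at most a fixed number of these terms (\(s_i\) in stages \(i\) and \(i+1\), plus \(R(s_i)\)). Hence the Hessian of \(G\) is a sum of local blocks with bounded overlap. Gershgorin's theorem, or a Cauchy--Schwarz bound on \(\bm h^\top\nabla^2G\,\bm h\), then gives a numerical bound. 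Adding the two parts yields \(\ell_0\).

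\textbf{Strong concavity.} \(G\) is independent of \(\y\). Each \(H(a,b;\cdot)\) has Hessian \(-\bM_N\preceq -N^{-2}\bI_N\). The Hessian of \(\fbar\) in \(\y\) is therefore block diagonal with blocks \(\preceq -N^{-2}\bI\), so \(\fbar\) is \(N^{-2}\)-strongly concave in \(\y\).

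\textbf{Differentiability of \(\bar\Phi\).} \(\cY_D\) is compact and convex, and \(\fbar(\x;\cdot)\) is strongly concave, so the maximizer \(\y_D(\x)\) is unique. By Danskin's theorem, \(\bar\Phi\) is differentiable with \(\nabla\bar\Phi(\x)=\nabla_{\x}\fbar(\x;\y_D(\x))\). This also follows from the standard fact that \(\y_D\) is Lipschitz in the strongly concave setting.

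\textbf{The identity \eqref{eq:restricted-value-identity}.} Suppose \(\norm{\y^\star(\bm\nu)}\le D/2\). Then the unconstrained maximizer is feasible, so \(\y_D(\x)=\y^\star(\bm\nu)\) and \(\bar\Phi=\bar\Phi_\infty\) at that point. The equality of gradients is the delicate step, because the identity holds pointwise and not on a neighborhood when the norm equals \(D/2\). I would avoid this issue by applying Danskin's theorem to both value functions. This gives
\[
\nabla\bar\Phi_\infty(\x)=\nabla_{\x}\fbar(\x;\y^\star(\bm\nu))=\nabla_{\x}\fbar(\x;\y_D(\x))=\nabla\bar\Phi(\x).
\]
For \(\bar\Phi_\infty\), Danskin's theorem needs a local compactness argument, since the domain is unbounded. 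Alternatively, \(\bar\Phi_\infty\) is given explicitly by \eqref{eq:unconstrained-value}, and its gradient equals \(\nabla_{\x}\fbar\) at \(\y^\star\) by the envelope theorem, using smoothness of \(\y^\star\), which is linear in \(\bm\nu\).

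I expect the bounded-overlap Hessian accounting for \(G\) to be the only step requiring care with constants.
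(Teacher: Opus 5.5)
Your proposal is correct and follows essentially the same route as the paper. Both arguments rest on uniform Hessian bounds for local terms with bounded overlap, block-diagonal $N^{-2}$-strong concavity, Danskin's theorem for differentiability of $\bar\Phi$, and the Danskin gradient formulas at the common maximizer for \eqref{eq:restricted-value-identity}. The paper packages the overlap count differently, bounding each full stage summand $B_i$ and splitting the sum into odd and even indices to get $\ell_0=2\ell_B$, where you split inner from outer terms and use a Gershgorin or Cauchy--Schwarz count. Your chain-rule justification of $\nabla\bar\Phi_\infty$, via the explicit formula \eqref{eq:unconstrained-value} and the linear map $\bm\nu\mapsto\y^\star(\bm\nu)$, is a slightly more careful treatment of the unbounded-domain step that the paper passes over.
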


\begin{proof}[Proof of Proposition~\ref{prop:function-class}]
For each $i\in[M]$, let $B_i$ denote the $i$-th summand in
\eqref{eq:hard-instance}.
Lemma~\ref{lem:gate-transformations} shows that the first two
derivatives of $p$ are uniformly bounded.
Lemmas~\ref{lem:outer-gate} and~\ref{lem:identity-extensions}
show that $q$, $e_{\s}$, and $e_{\bm{\nu}}$, together with
their first two derivatives, are uniformly bounded by numerical
constants.
Applying the product and chain rules to
\eqref{eq:activation-component}-\eqref{eq:ordering-component}
and~\eqref{eq:phase-potential} therefore gives uniform Hessian
bounds for $C_{\rm en}$, $C_{\rm ex}$, $C_{\rm st}$, and $R$. 
Lemma~\ref{lem:inner-interface}~\textup{(ii)} also gives a uniform
Hessian bound for $H$.
Thus each $B_i$ is jointly $\ell_B$-smooth for a numerical constant
$\ell_B>0$ independent of $M,N,D$.

Since $B_i$ and $B_j$ depend on disjoint coordinates whenever
$|i-j|\ge2$, the sum over odd indices and the sum over even indices
each have a block diagonal Hessian with norm at most $\ell_B$.
Their sum $\fbar$ is therefore jointly $\ell_0$-smooth with
$\ell_0:=2\ell_B$.

Only the inner chains depend on $\y$.
By Lemma~\ref{lem:inner-interface}~\textup{(ii)}, each is
$N^{-2}$-strongly concave in its own dual coordinates.
Since these coordinates are disjoint, $\fbar$ is
$N^{-2}$-strongly concave in $\y$.
Compactness of $\cY_D$ and strong concavity give a unique maximizer,
so Danskin's theorem implies that $\bar\Phi$ is differentiable.

Finally, if $\norm{\y^\star(\bm{\nu})}\le D/2$, then the
unconstrained maximizer is feasible and hence also maximizes over
$\cY_D$.
The two value functions therefore have the same value and the same
maximizer at this point.
Their Danskin gradient formulas give
\eqref{eq:restricted-value-identity}.
We finish the proof.
\end{proof}

\subsection{Saddle zero-chain}
\label{subsec:zero-chain}

We next verify that the construction enforces the coordinate order in
\eqref{eq:joint-coordinate-order}.

\begin{proposition}
\label{prop:saddle-zero-chain}
The objective $\fbar$ is a first-order saddle zero-chain with respect
to the ordering $\ord$ in \eqref{eq:joint-coordinate-order}.
Thus Condition~\ref{cond:unscaled}~\textup{(C2)} holds.
\end{proposition}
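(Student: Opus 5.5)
To prove the proposition, fix $i\in[L]$ and a point $(\x,\y)$ with $\supp_{\ord}(\x,\y)\subseteq[i-1]$. We must show that every partial derivative of $\fbar$ with respect to a coordinate of position $>i$ in $\ord$ vanishes. Since $\fbar$ is a sum of stage terms, we argue term by term. For each summand (the couplings $C_{\rm en},C_{\rm ex},C_{\rm st}$, the regularizer $R$, and the chain $H$), we check that its gradient has no component in positions beyond $i$ whenever the input support lies in $[i-1]$. The key facts are the vanishing properties of the auxiliary functions: $q(t)=0$ and $q'(t)=0$ for $t\le 1/5$, in particular at $t=0$; $e_{\s}(0)=e_{\bm\nu}(0)=0$; and $p$ together with its derivative vanishes on $t\le0$, so $R$ and $R'$ vanish near $0$ (both $p(-10t)$ and $p(10t-1)$, $p(10t-10)$ have zero value and derivative at $t=0$).

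We first handle the inner chain. Lemma~\ref{lem:inner-interface} gives that $H(a_i,b_i;\y^{(i)})$ is a zero-chain in the order $(a_i,y^{(i)}_1,\dots,y^{(i)}_N,b_i)$. Because these coordinates are consecutive in $\ord$, the property transfers directly: if the support of $(\x,\y)$ stops before or inside stage $i$'s block, then $\nabla H$ reveals at most the next coordinate in that block. If the support stops before $a_i$, all of $a_i,\y^{(i)},b_i$ are zero, and the gradient of $H$ is zero because $H$ is quadratic and homogeneous in these variables.

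We then handle the couplings by a case analysis on where the frontier $i-1$ lies relative to stage $j$'s coordinates $(a_j,\y^{(j)},b_j,s_j)$, with $s_{j-1}$ preceding them.

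\emph{Entrance coupling.} If $s_{j-1}=0$, then $q(s_{j-1})=q'(s_{j-1})=0$, so $C_{\rm en}(s_{j-1},a_j,s_j)$ and its whole gradient vanish. Otherwise $s_{j-1}$ lies in the revealed support, so every later coordinate of stage $j$ is zero. Then $\partial_{a_j}C_{\rm en}$ may be nonzero, but $a_j$ is exactly the next coordinate. Meanwhile $\partial_{s_j}C_{\rm en}$ carries the factor $q'(s_j)=0$ (since $s_j=0$), and $\partial_{s_{j-1}}$ concerns a revealed coordinate.

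\emph{Exit coupling.} $C_{\rm ex}$ contains $e_{\s}(s_j)e_{\bm\nu}(b_j)$. If $b_j=0$, then its $s_j$-derivative is $q(s_{j-1})\,e_{\s}'(s_j)\,(1-q(s_j))\,e_{\bm\nu}(0)+\dots=0$ (the remaining terms carry $e_{\s}(0)=0$ or $q'(0)=0$). Its $b_j$-derivative contains $e_{\s}(s_j)$, which is $0$ when $s_j=0$. The $s_{j-1}$-derivative contains $e_{\s}(s_j)e_{\bm\nu}(b_j)$ and so vanishes unless both are revealed. If $b_j\neq0$ but $s_j=0$, then only the $s_j$-derivative is nonzero, and $s_j$ is the next coordinate.

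\emph{State coupling and regularizer.} $C_{\rm st}(s_{j-1},s_j)$ contains $q(s_j)$, so its gradient in $s_j$ involves $q'(s_j)=0$ at $s_j=0$. Its $s_{j-1}$-derivative carries $q(s_j)=0$. Hence it contributes nothing when $s_j$ is unrevealed. Likewise $R'(0)=0$ kills the regularizer's contribution.

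The main obstacle is the bookkeeping for the mixed couplings $C_{\rm ex}$ and $C_{\rm st}$. These depend on a coordinate $s_{j-1}$ far back in the order, so we must check that their derivatives with respect to earlier, already-revealed coordinates never leak information. More importantly, their derivatives with respect to $s_j$ or $b_j$ must vanish unless the predecessor coordinate ($b_j$, respectively $s_{j-1}$ and $b_j$) is revealed. I would organize this by writing each partial derivative explicitly and checking that every term contains a factor from $\{q(0),q'(0),e_{\s}(0),e_{\bm\nu}(0)\}$, all of which are zero. Finally, summing over stages and noting that stage terms share only the variables $s_{j-1}$ completes the verification.
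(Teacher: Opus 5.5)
Your proposal is correct and is essentially the paper's argument. The same vanishing facts do all the work: $q(0)=q'(0)=0$, $e_{\s}(0)=e_{\bm\nu}(0)=0$, $R'(0)=0$, and the zero-chain property of $H$ from Lemma~\ref{lem:inner-interface}\,\textup{(iii)}. You only organize the check term by term, whereas the paper locates the stage containing the next coordinate and reads off \eqref{eq:connector-gradient}.

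One wording fix is needed in the entrance-coupling case. There, $s_{j-1}\neq0$ does not force the later coordinates of stage $j$ to vanish, nor does it make $a_j$ exactly the next coordinate. What you need, and what does hold, is that $a_j$ immediately follows $s_{j-1}$ in $\ord$, so its position is at most $i$. Meanwhile $\partial_{s_j}C_{\rm en}=4q(s_{j-1})q'(s_j)e_{\bm\nu}(a_j)$ either sits at a revealed position or vanishes because $s_j=0$.
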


\begin{proof}[Proof of Proposition~\ref{prop:saddle-zero-chain}]
Equations~\eqref{eq:base-step} and~\eqref{eq:phase-potential} give
$R'(0)=0$.
Lemmas~\ref{lem:outer-gate} and~\ref{lem:identity-extensions},
together with
\eqref{eq:activation-component}-\eqref{eq:ordering-component},
give, for every $u,a,b\in\R$,
\begin{equation}
\begin{aligned}
  &\nabla_{(u,v)}C_{\rm st}(u,0)=(0,0),\qquad
  \nabla_{(u,a,v)}C_{\rm en}(u,0,0)=(0,-4q(u),0),\\
  &\nabla_vC_{\rm en}(u,a,0)=0,\qquad\mbox{and}\qquad
  \nabla_{(u,b,v)}C_{\rm ex}(u,b,0)
  =(0,0,-q(u)e_{\bm{\nu}}(b)).
\end{aligned}
\label{eq:connector-gradient}
\end{equation}

Fix $k\in\{0,\ldots,L-1\}$ and a point whose support is contained
in the first $k$ coordinates of $\ord$.
Let $i$ be the stage containing the $(k+1)$-st coordinate.
Since $s_i$ lies after the first $k$ coordinates, $s_i=0$.
Every later stage therefore has all its variables equal to zero,
so its gradient vanishes by $R'(0)=0$,
\eqref{eq:connector-gradient}, and $\nabla H(0,0;\bz)=\bz$.
Earlier stages depend only on the first $k$ coordinates.
It therefore suffices to consider stage $i$.

If the $(k+1)$-st coordinate is $a_i$, then
$a_i,\y^{(i)},b_i,s_i$ all vanish.
By \eqref{eq:connector-gradient}, the only gradient component beyond
the first $k$ coordinates that may be nonzero is
$\nabla_{a_i}\fbar=-4q(s_{i-1})$.
This also covers $k=0$, where $i=1$ and $s_0=1$.

If the $(k+1)$-st coordinate is one of
$y_1^{(i)},\ldots,y_N^{(i)},b_i$, then $a_i$ is among the first $k$
coordinates, while $b_i=s_i=0$.
Equation~\eqref{eq:connector-gradient} shows that the outer terms have
no gradient components beyond the first $k$ coordinates.
By Lemma~\ref{lem:inner-interface}~\textup{(iii)}, the inner chain
can add only the $(k+1)$-st coordinate to the gradient support.

Finally, if the $(k+1)$-st coordinate is $s_i$, then all other
coordinates of stage $i$ are among the first $k$ coordinates.
By \eqref{eq:connector-gradient}, only $C_{\rm ex}$ can contribute to
the derivative with respect to $s_i$, giving
$\nabla_{s_i}\fbar=-q(s_{i-1})e_{\bm{\nu}}(b_i)$.

In every case, the gradient support is contained in the first
$k+1$ coordinates of $\ord$.
This proves the support implication in
Definition~\ref{def:saddle-zero-chain}.
\end{proof}

\subsection{Gradient lower bound}
\label{subsec:terminal}

We now verify Condition~\ref{cond:unscaled}~\textup{(C3)}.
Assuming that $s_M\le1/5$ and the gradient of the value function
is small, we establish the state structure and bound the connector
vector.
These bounds make the unconstrained maximizer feasible when
$N\le c_DD$, allowing us to use \eqref{eq:unconstrained-value}
to derive a contradiction.

Since the outer terms are independent of $\y$, the maximization
defining $\bar\Phi$ only acts on the inner blocks. We therefore define
the constrained inner value function
\[
  V_D(\bm{\nu})
  :=
  \max_{\y\in\cY_D}
  \sum_{i=1}^{M} H(a_i,b_i;\y^{(i)}).
\]
With this notation, $\bar\Phi$ is the sum of $V_D$ and the outer
terms, and $V_D$ is independent of the state vector $\s$.
\begin{lemma}
\label{lem:phase-margins}
For every $D>0$ and every primal point, the following statements hold:
\begin{enumerate}[label=(\roman*)]
  \item For every $i\in[M]$, if $s_i\le-1/10$, then
  $\nabla_{s_i}\bar\Phi\le-1$.
  \item For every $i\in[M]$, if $1/5<s_i<1$, then
  $\nabla_{s_i}\bar\Phi\le-1$.
  \item For every $i\in\{2,\ldots,M\}$, if
  $-1/10<s_{i-1}\le1/5$ and $s_i\ge1$, then
  $\nabla_{s_{i-1}}\bar\Phi\le-1$.
\end{enumerate}
\end{lemma}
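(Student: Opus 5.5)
Since $V_D$ does not depend on $\s$, every partial derivative $\nabla_{s_i}\bar\Phi$ equals the corresponding partial derivative of the outer terms. No information about the dual maximizer is needed, and the claim holds for every $D>0$. The variable $s_i$ enters only two groups of terms. As the second argument $v$ it enters $C_{\rm en}(s_{i-1},a_i,s_i)$, $C_{\rm ex}(s_{i-1},b_i,s_i)$, $C_{\rm st}(s_{i-1},s_i)$ and $R(s_i)$. As the first argument $u$ it enters $C_{\rm en}(s_i,a_{i+1},s_{i+1})$, $C_{\rm ex}(s_i,b_{i+1},s_{i+1})$ and $C_{\rm st}(s_i,s_{i+1})$, when $i<M$. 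I would compute these partial derivatives explicitly, using the following facts:
\begin{itemize}
\item $p'\in[0,1]$, with $p'(t)=0$ for $t\le0$ and $p'(t)=1$ for $t\ge1$;
\item $q=q'=0$ on $(-\infty,1/5]$ and $q=1$ on $[1,\infty)$;
\item $e_{\s}'\in[0,1]$, with $e_{\s}(t)=t$ and $e_{\s}'(t)=1$ on $|t|\le2$;
\item $|e_{\bm\nu}|\le 43/2$.
\end{itemize}
These come from Lemmas~\ref{lem:outer-gate} and~\ref{lem:identity-extensions}, or directly from the definitions (e.g.\ $e_{\bm\nu}(t)\to\pm 43/2$ as $t\to\pm\infty$). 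Differentiating~\eqref{eq:phase-potential} gives
\[
R'(t)=-24p'(-10t)-(c_R+1)\bigl[p'(10t-1)-p'(10t-10)\bigr].
\]

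For (i), take $s_i\le-1/10$. Then $-10s_i\ge1$ and $10s_i-1<0$, so $R'(s_i)=-24$. Since $q(s_i)=q'(s_i)=0$, the $v$-derivatives of $C_{\rm en}$ and $C_{\rm st}$ vanish. The $v$-derivative of $C_{\rm ex}$ reduces to $-q(s_{i-1})e_{\s}'(s_i)e_{\bm\nu}(b_i)$, which has magnitude at most $43/2$. In the $u$-slot, the $C_{\rm en}$ and $C_{\rm ex}$ derivatives carry a factor $q'(s_i)=0$. The $C_{\rm st}$ derivative equals $-24q(s_{i+1})e_{\s}'(s_i)\le0$. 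Summing gives $\nabla_{s_i}\bar\Phi\le-24+43/2<-1$.

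For (ii), take $1/5<s_i<1$. Then $p'(-10s_i)=0$, $p'(10s_i-1)=1$ and $p'(10s_i-10)=0$, so $R'(s_i)=-(c_R+1)$. I would bound each of the six remaining coupling derivatives crudely by its sup norm. Their sum is at most $c_R-25$ by the definition of $c_R$, which gives $\nabla_{s_i}\bar\Phi\le-26$. These sup norms are finite because all factors and their first derivatives are bounded.

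For (iii), differentiate in $s_{i-1}$ with $-1/10<s_{i-1}\le1/5$ and $s_i\ge1$. In stage $i$, $q(s_i)=1$, so $1-q(s_i)=0$; this kills $C_{\rm en}$ and $C_{\rm ex}$ together with their $u$-derivatives. For $C_{\rm st}$, the $u$-derivative is $24q(s_i)\bigl[-e_{\s}'(u)(1-q(u))-(1-e_{\s}(u))q'(u)\bigr]$. At $u=s_{i-1}$ we have $q(u)=q'(u)=0$ and $e_{\s}'(u)=1$, so this equals $-24$. From stage $i-1$, where $s_{i-1}$ is in the $v$-slot, $q'(s_{i-1})=0$ again leaves only $-q(s_{i-2})e_{\s}'(s_{i-1})e_{\bm\nu}(b_{i-1})$, of magnitude at most $43/2$. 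Finally, $R'(s_{i-1})\le0$ because every $p'$ term has the correct sign: $p'(10s_{i-1}-10)=0$ on this range. The total is again at most $-24+43/2<-1$.

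The main obstacle is bookkeeping rather than any deep estimate. In each case one must make sure that every cross term either vanishes because of a $q'$ factor or a $1-q(v)$ factor, or has a sign or magnitude that the dominant $-24$ or $-(c_R+1)$ absorbs. The tightest spot is the $C_{\rm ex}$ term of size $43/2$ against $24$, so I would double-check the sharp bounds $0\le e_{\s}'\le1$ and $\|e_{\bm\nu}\|_\infty=43/2$.
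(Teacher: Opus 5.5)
Your proposal is correct and follows essentially the same route as the paper. Since $V_D$ is independent of $\s$, only $R$ and the couplings contribute. When $v\le1/5$, the coupling $v$-derivatives reduce to the $C_{\rm ex}$ term of size at most $43/2$, and when $u\le1/5$, the $u$-derivatives reduce to the nonpositive $C_{\rm st}$ term. In (ii), the slope $-(c_R+1)$ of $R$ absorbs all couplings through the definition of $c_R$. Your constants are marginally sharper than the paper's (for example $-26$ in (ii), because the six sup norms sum to $c_R-25$ rather than $c_R$), but this does not matter for the argument.
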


\begin{proof}[Proof of Lemma~\ref{lem:phase-margins}]
Since $V_D$ is independent of $\s$, only
$R,C_{\rm st},C_{\rm en},C_{\rm ex}$ contribute to the state derivatives.
Each $s_i$ appears in stage $i$ and, if $i<M$, stage $i+1$.
Thus the definition of $c_R$ gives
\begin{equation}
  \bigl|\nabla_{s_i}\bar\Phi-R'(s_i)\bigr|\le c_R,
  \qquad i\in[M].
\label{eq:state-coupling-bound}
\end{equation}
By Lemma~\ref{lem:gate-transformations}, $p'$ is nonnegative
and nondecreasing.
Differentiating \eqref{eq:phase-potential} and using the constant
branches in \eqref{eq:base-step} gives $R'\le0$. In particular,
\begin{equation}
\begin{aligned}
  &R'(t)=-24\quad\left(t\le-\frac1{10}\right),\qquad\mbox{and}\qquad
  R'(t)=-(c_R+1)\quad\left(\frac15\le t\le1\right).
\end{aligned}
\label{eq:phase-potential-derivatives}
\end{equation}

We record the coupling derivatives needed below.
If $v\le1/5$, differentiating
\eqref{eq:activation-component}-\eqref{eq:ordering-component}
and applying Lemmas~\ref{lem:outer-gate}
and~\ref{lem:identity-extensions} gives
\begin{equation}
  \nabla_vC_{\rm st}=\nabla_vC_{\rm en}=0,
  \qquad\mbox{and}\qquad
  \nabla_vC_{\rm ex}
  =-q(u)e_{\s}'(v)e_{\bm{\nu}}(b)
  \le\frac{43}{2}<22.
\label{eq:coupling-v-low}
\end{equation}
Similarly, if $u\le1/5$,
\begin{equation}
  \nabla_uC_{\rm en}=\nabla_uC_{\rm ex}=0,
  \qquad\mbox{and}\qquad
  \nabla_uC_{\rm st}=-24q(v)e_{\s}'(u)\le0.
\label{eq:coupling-u-low}
\end{equation}

For \textup{(i)}, suppose $s_i\le-1/10$.
By \eqref{eq:coupling-v-low}, the coupling terms in stage $i$
contribute at most $22$ to the derivative with respect to $s_i$.
If $i<M$, \eqref{eq:coupling-u-low} shows that those in stage $i+1$
contribute $-24q(s_{i+1})e_{\s}'(s_i)\le0$.
Since \eqref{eq:phase-potential-derivatives} gives $R'(s_i)=-24$,
we obtain $\nabla_{s_i}\bar\Phi\le-24+22=-2$.

For \textup{(ii)}, suppose $1/5<s_i<1$.
Equation~\eqref{eq:phase-potential-derivatives} gives
$R'(s_i)=-(c_R+1)$, while \eqref{eq:state-coupling-bound}
bounds the total coupling contribution by $c_R$.
Thus $\nabla_{s_i}\bar\Phi\le-(c_R+1)+c_R=-1$.

For \textup{(iii)}, suppose
$-1/10<s_{i-1}\le1/5$ and $s_i\ge1$.
Since $q(s_i)=e_{\s}'(s_{i-1})=1$, \eqref{eq:coupling-u-low} gives
$\nabla_uC_{\rm st}=-24$ and
$\nabla_uC_{\rm en}=\nabla_uC_{\rm ex}=0$.
Thus the coupling terms in stage $i$ contribute exactly $-24$
to the derivative with respect to $s_{i-1}$.
By \eqref{eq:coupling-v-low}, those in stage $i-1$ contribute at most $22$.
Together with $R'(s_{i-1})\le0$, this gives
$\nabla_{s_{i-1}}\bar\Phi\le-24+22=-2$.
We finish the proof.
\end{proof}

Set $\tau_0:=1/4$.
We call a state coordinate low if it lies in
$(-1/10,1/5]$ and high if it lies in $[1,\infty)$.
We next use the derivative bounds to identify the frontier between
high and low coordinates and to bound the connector vector.

\begin{lemma}
\label{lem:bounded-connector}
For every admissible pair $(M,N)$, every $D>0$, and every
$(\s,\bm{\nu})\in\R^{3M}$, if
\[
  s_M\le\frac15,
  \qquad\mbox{and}\qquad
  \norm*{\nabla\bar\Phi(\s,\bm{\nu})}\le\tau_0,
\]
then there is a unique $j\in[M]$ satisfying
\begin{equation}
  s_i\ge1\quad(0\le i<j),
  \qquad\mbox{and}\qquad
  -\frac1{10}<s_i\le\frac15\quad(j\le i\le M).
\label{eq:frontier}
\end{equation}
Equivalently, the state coordinates have the form
\[
  \underbrace{s_0,\ldots,s_{j-1}}_{\mathrm{high}}
  \ \big|\
  \underbrace{s_j,\ldots,s_M}_{\mathrm{low}}.
\]
Moreover, $\norm{\bm{\nu}}\le20$.
\end{lemma}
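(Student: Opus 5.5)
The plan is to first pin down which values the states $s_i$ can take, then show they must be ordered high-before-low, and finally convert the small gradient in the connector directions into a bound on $\norm{\bm{\nu}}$. We work at a point $(\s,\bm{\nu})$ with $s_M\le 1/5$ and $\norm{\nabla\bar\Phi(\s,\bm{\nu})}\le\tau_0=1/4$.

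\emph{Frontier structure.} Every partial derivative has absolute value at most $1/4<1$. Hence parts (i) and (ii) of Lemma~\ref{lem:phase-margins} exclude $s_i\le -1/10$ and $1/5<s_i<1$ for each $i\in[M]$. So every $s_i$ is either low or high, and $s_0=1$ is high. Part (iii) excludes a low $s_{i-1}$ followed by a high $s_i$ for $i\in\{2,\ldots,M\}$. Thus once a state is low, all later states are low. Since $s_M\le 1/5$, $s_M$ is low. We set $j$ to be the smallest index in $[M]$ with $s_j$ low; this $j$ exists because $s_M$ is low. This gives \eqref{eq:frontier}, and $j$ is unique because it is the first low index.

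\emph{Outer contribution to the connector gradient.} Write $\nabla_{\bm{\nu}}\bar\Phi=\nabla V_D(\bm{\nu})+\bm g$, where $\bm g$ collects the derivatives of $C_{\rm en}$ and $C_{\rm ex}$ with respect to $(a_i,b_i)$. For $i<j$, both $s_{i-1}$ and $s_i$ are high, so $1-q(s_i)=0$ and both couplings vanish identically in $(a_i,b_i)$. For $i>j$, $s_{i-1}$ is low, so $q(s_{i-1})=0$ and again the couplings vanish. Only stage $j$ survives, with
\[
\bigl|\partial_{a_j}C_{\rm en}\bigr|\le 4\norm{e_{\bm{\nu}}'}_\infty,
\qquad
\bigl|\partial_{b_j}C_{\rm ex}\bigr|\le \tfrac15\norm{e_{\bm{\nu}}'}_\infty,
\]
where the second bound uses $|e_{\s}(s_j)|=|s_j|\le 1/5$ for low $s_j$. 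Lemma~\ref{lem:identity-extensions} bounds these by numerical constants, so $\norm{\bm g}\le c_g$ and therefore $\norm{\nabla V_D(\bm{\nu})}\le 1/4+c_g$.

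\emph{Coercivity of $V_D$.} Decompose
\[
V_D(\bm{\nu})=\frac{2-c_N}{2}\norm{\bm{\nu}}^2+G(\bm{\nu}),
\qquad
G(\bm{\nu})=\max_{\y\in\cY_D}\Bigl[-\tfrac12\y^\top\mathrm{blkdiag}(\bm M_N)\y+\langle \bm C\bm{\nu},\y\rangle\Bigr],
\]
where $\bm C$ is the linear map sending $\bm{\nu}$ to the blocks $k_N^{-1/2}(a_i\ee_1-b_i\ee_N)$. The function $G$ is convex and differentiable, being a maximum of affine functions of $\bm{\nu}$ with a unique maximizer. It satisfies $G(\bz)=0$ and $G\ge 0$, since $\y=\bz$ is feasible. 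Convexity then gives $\langle\nabla G(\bm{\nu}),\bm{\nu}\rangle\ge G(\bm{\nu})-G(\bz)\ge 0$. Consequently
\[
(2-c_N)\norm{\bm{\nu}}^2\le\langle\nabla V_D(\bm{\nu}),\bm{\nu}\rangle\le(1/4+c_g)\norm{\bm{\nu}},
\]
so $\norm{\bm{\nu}}\le(1/4+c_g)/(2-c_N)$.

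\emph{Main obstacle.} The argument needs a uniform margin $2-c_N\ge\kappa>0$ for all $N\ge 10$. I would obtain it from the estimates on $(\bm B_N)_{11}$ and $(\bm B_N)_{1N}$ behind Lemma~\ref{lem:inner-interface}. A consistency check is available: setting $b=0$ in \eqref{eq:effective-link} forces $\frac{2-c_N}{2}+\frac{c_N}{2}=1$, which is automatic and gives no information about $c_N$. So the margin must come from the explicit asymptotics, where I expect $c_N$ to be close to $1$. If this margin is unavailable or too weak for the constant $20$, the fallback is to use the unconstrained maximizer: when it is feasible, $\nabla V_D=\nabla Q$-sum, which has smallest eigenvalue $1/2$. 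The remaining work is to check that the explicit constants $c_g$ and $\kappa$ yield $\norm{\bm{\nu}}\le 20$, which is consistent with the identity range $|t|\le 21$ chosen for $e_{\bm{\nu}}$.
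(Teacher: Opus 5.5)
Your argument is the paper's proof: the frontier comes from Lemma~\ref{lem:phase-margins}, the outer connector gradient is localized to stage $j$ through the factor $q(s_{i-1})(1-q(s_i))$, $V_D$ is split by the convex decomposition, and Cauchy--Schwarz finishes. Your bound $|\partial_{b_j}C_{\rm ex}|\le 1/5$ via $e_{\s}(s_j)=s_j$ is sharper than the paper's $5/2$, which is harmless.

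The step you flag as the main obstacle is already settled. Lemma~\ref{lem:inner-interface}~(i) gives $6/5\le c_N\le 8/5$ for all $N\ge 10$, so $2-c_N\ge 2/5$. Part~(iv) of that lemma is exactly your inequality $\ip{\bm{\nu}}{\nabla V_D(\bm{\nu})}\ge\frac25\norm{\bm{\nu}}^2$, proved by the same decomposition you give. With $\norm{\bm g}\le 4+\frac15$ you get $\norm{\bm{\nu}}\le\frac52\bigl(\frac14+\frac{21}{5}\bigr)<12\le 20$.

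Two corrections, though.

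First, the margin does not follow from the crude column estimates $N/10\le(\bB_N)_{j1}\le 20N$, which only give $c_N\le 200$. The paper instead uses the explicit solution $(\bB_N)_{j1}\propto\cosh\bigl((N-j+\tfrac12)\lambda_N\bigr)$ with $\cosh\lambda_N=1+1/(2N^2)$. This yields $c_N\le\cosh 1$, and in fact $c_N\to\cosh 1\approx 1.54$, not $1$ as you guessed.

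Second, the fallback through the unconstrained maximizer is circular and should be dropped. It presupposes $\norm{\y^\star(\bm{\nu})}\le D/2$, i.e.\ a bound on $\norm{\bm{\nu}}$ together with $N\le c_DD$. That is exactly what this lemma is used to deliver in Proposition~\ref{prop:terminal-gradient}. The lemma itself is claimed for every $D>0$, with the dual constraint possibly active, which is why the constraint-robust convexity argument is the right tool.
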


\begin{proof}[Proof of Lemma~\ref{lem:bounded-connector}]
Lemma~\ref{lem:phase-margins} and $\tau_0<1$ imply that every
state coordinate is either low or high, and that no low coordinate
is followed by a high one.
Since $s_0=1$ is high and $s_M$ is low, the first low index
is the unique $j$ satisfying \eqref{eq:frontier}.

We now bound $\bm{\nu}$.
The connector derivatives of $C_{\rm en}$ and $C_{\rm ex}$ contain
the factor $q(s_{i-1})(1-q(s_i))$.
By \eqref{eq:frontier} and Lemma~\ref{lem:outer-gate}, this factor
vanishes unless $i=j$.
Thus only stage $j$ contributes to the gradient of the outer
couplings with respect to $\bm{\nu}$.
Lemmas~\ref{lem:outer-gate} and~\ref{lem:identity-extensions},
together with
\eqref{eq:activation-component}-\eqref{eq:writeback-component},
give
\[
  |\nabla_aC_{\rm en}(u,a,v)|\le4,
  \qquad\mbox{and}\qquad
  |\nabla_bC_{\rm ex}(u,b,v)|\le\frac52.
\]
This gradient therefore has norm at most $4+5/2<7$.
The bound is independent of $M$, since only stage $j$ contributes.

Since $R$ and $C_{\rm st}$ are independent of $\bm{\nu}$,
the preceding bound gives
\begin{equation}
  \norm*{\nabla_{\bm{\nu}}\bar\Phi-\nabla V_D(\bm{\nu})}\le7.
\label{eq:outer-connector-gradient-bound}
\end{equation}
Lemma~\ref{lem:inner-interface}~\textup{(iv)} gives
\begin{equation}
  \ip*{\bm{\nu}}{\nabla V_D(\bm{\nu})}
  \ge\frac25\norm*{\bm{\nu}}^2.
\label{eq:inner-connector-gradient-bound}
\end{equation}
This estimate holds even when the dual constraint is active.
Combining \eqref{eq:outer-connector-gradient-bound}
and~\eqref{eq:inner-connector-gradient-bound} with Cauchy-Schwarz
and the assumption that the gradient of the value function is small
yields
\[
  \frac25\norm*{\bm{\nu}}^2-7\norm*{\bm{\nu}}
  \le\ip*{\bm{\nu}}{\nabla_{\bm{\nu}}\bar\Phi}
  \le\tau_0\norm*{\bm{\nu}}.
\]
If $\bm{\nu}=\bz$, the claim is immediate.
Otherwise, dividing by $\norm{\bm{\nu}}$ gives
$\norm{\bm{\nu}}\le(5/2)(7+\tau_0)<20$.
We finish the proof.
\end{proof}

We now combine the frontier and connector bounds with the inner
maximizer estimate to verify
Condition~\ref{cond:unscaled}~\textup{(C3)}.

\begin{proposition}
\label{prop:terminal-gradient}
There are numerical constants $g_0,c_D>0$, independent of $M,N,D$,
such that, for every admissible pair $(M,N)$, every $D>0$, and every
$(\s,\bm{\nu})\in\R^{3M}$,
\[
  N\le c_DD,\qquad s_M\le\frac15
  \quad\Longrightarrow\quad
  \norm*{\nabla\bar\Phi(\s,\bm{\nu})}\ge g_0.
\]
Since $s_M$ is the final coordinate in
\eqref{eq:joint-coordinate-order}, this verifies
Condition~\ref{cond:unscaled}~\textup{(C3)}.
\end{proposition}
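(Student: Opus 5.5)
We argue by contradiction with $g_0:=\min\{\tau_0,g_1\}$, where $g_1>0$ is a numerical constant fixed below. Suppose $N\le c_DD$, $s_M\le1/5$, and $\norm{\nabla\bar\Phi(\s,\bm{\nu})}<g_0$. Since $g_0\le\tau_0$, Lemma~\ref{lem:bounded-connector} applies. It gives a unique frontier index $j\in[M]$ satisfying~\eqref{eq:frontier}, together with $\norm{\bm{\nu}}\le20$. We then use the inner maximizer estimate stated after~\eqref{eq:effective-link}. It yields
\[
\norm{\y^\star(\bm{\nu})}^2=\sum_i\norm{\w^\star(a_i,b_i)}^2\le c_{\y}^2N^2\norm{\bm{\nu}}^2,
\]
so $\norm{\y^\star(\bm{\nu})}\le 20c_{\y}N$. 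Choosing $c_D:=1/(40c_{\y})$ gives $\norm{\y^\star(\bm{\nu})}\le D/2$. Proposition~\ref{prop:function-class} then gives $\nabla\bar\Phi=\nabla\bar\Phi_\infty$ at this point. It therefore suffices to show that $\norm{\nabla\bar\Phi_\infty}\ge g_1$ in the frontier configuration, using the explicit formula~\eqref{eq:unconstrained-value}.

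We then work in stage $j$, where $s_{j-1}\ge1$ and $s_j\in(-1/10,1/5]$. In this stage $q(s_{j-1})=1$ and $q(s_j)=0$, and $e_{\s}(s_j)=s_j$. By the frontier structure, the gate factor $q(s_{i-1})(1-q(s_i))$ vanishes for every other stage. Since $\norm{\bm{\nu}}\le20<21$, we also have $e_{\bm{\nu}}(a_j)=a_j$ and $e_{\bm{\nu}}(b_j)=b_j$. The connector derivatives are therefore
\[
\partial_{a_j}\bar\Phi_\infty=-4+2a_j-b_j,\qquad \partial_{b_j}\bar\Phi_\infty=-s_j+2b_j-a_j.
\]
For the state derivative at $s_j$, we collect contributions in a neighbourhood of $s_j\in(-1/10,1/5]$. $C_{\rm en}$ and $C_{\rm st}$ of stage $j$ have zero $v$-derivative by~\eqref{eq:coupling-v-low}. $C_{\rm ex}$ contributes $-e_{\s}'(s_j)b_j=-b_j$. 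Stage $j+1$, if $j<M$, contributes $-24q(s_{j+1})e_{\s}'(s_j)=0$ because $s_{j+1}$ is low. Finally $R'(s_j)$ is equal to $0$ on $[0,1/10]$ and negative elsewhere in this range.

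The main step, which I expect to be the delicate one, is to show that these three derivatives cannot all be small at once. From the connector equations, if both are at most $g_1$ in absolute value, then approximately $b_j\approx(8+2s_j)/3$, so $b_j\ge 2$ up to $O(g_1)$. We then split into cases on $s_j$. For $s_j\in[0,1/10]$ we have $R'(s_j)=0$, so $\partial_{s_j}\bar\Phi_\infty\approx-b_j\le-2+O(g_1)$. For other $s_j$ in the low range, $R'(s_j)\le0$ and only makes the derivative more negative, since $R'\le0$ everywhere. In every case $|\partial_{s_j}\bar\Phi_\infty|\ge 1$ once $g_1$ is a small numerical constant, say $g_1=1/10$. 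This is the required contradiction.

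The computation needs care in one place: confirming that no other term contributes to $\partial_{s_j}$ or to the stage-$j$ connector derivatives. This uses the vanishing derivatives of $q$ outside $(1/5,1)$ and the identity ranges of $e_{\s}$ and $e_{\bm{\nu}}$. With that verified, the constants $g_0=\min\{1/4,1/10\}$ and $c_D=1/(40c_{\y})$ are independent of $M,N,D$, and the claim follows.
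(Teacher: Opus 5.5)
Your argument follows the paper's proof. You apply Lemma~\ref{lem:bounded-connector} to get the frontier index $j$ and $\norm{\bm{\nu}}\le 20$, use $c_D=(40c_{\y})^{-1}$ to make $\y^\star(\bm{\nu})$ feasible so that $\nabla\bar\Phi=\nabla\bar\Phi_\infty$, solve the two stage-$j$ connector equations for $b_j$, and conclude from $\nabla_{s_j}\bar\Phi=R'(s_j)-b_j\le -b_j$.

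There is one arithmetic slip. The connector equations give $b_j=(4+2s_j+g_a+2g_b)/3$, so $b_j\approx(4+2s_j)/3\in(19/15,22/15]$. Your $(8+2s_j)/3$ confuses $b_j$ with $a_j\approx(8+s_j)/3$, and the claim $b_j\ge 2$ is false.

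The conclusion still holds, because $b_j>(19/5-3g_1)/3=7/6>1$ for $g_1=1/10$. The paper simply takes $g_0=\tau_0$ and gets $b_j>61/60$. So you need neither the extra constant $g_1$ nor the case split on where $R'$ vanishes; $R'\le 0$ is enough.
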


\begin{proof}[Proof of Proposition~\ref{prop:terminal-gradient}]
Set $c_D:=(40c_{\y})^{-1}$.
Fix an admissible pair $(M,N)$, $D>0$, and a primal point satisfying
$N\le c_DD$ and $s_M\le1/5$.
Suppose, toward a contradiction, that
$\norm{\nabla\bar\Phi(\s,\bm{\nu})}\le\tau_0$.

Lemma~\ref{lem:bounded-connector} gives a frontier index $j$
satisfying \eqref{eq:frontier} and the bound $\norm{\bm{\nu}}\le20$.
Summing the squared bounds for the inner chains in
Lemma~\ref{lem:inner-interface}~\textup{(i)} and using $N\le c_DD$ gives
\[
  \norm*{\y^\star(\bm{\nu})}
  \le c_{\y}N\norm*{\bm{\nu}}
  \le20c_{\y}N
  \le\frac D2.
\]
Thus the unconstrained maximizer is feasible, and
\eqref{eq:restricted-value-identity} gives
\[
  \nabla\bar\Phi(\s,\bm{\nu})
  =\nabla\bar\Phi_{\infty}(\s,\bm{\nu}).
\]

At stage $j$, Lemmas~\ref{lem:outer-gate}
and~\ref{lem:identity-extensions}, together with
\eqref{eq:frontier} and $\norm{\bm{\nu}}\le20$, give
\begin{equation}
\begin{aligned}
  &q(s_{j-1})=1,\qquad q(s_j)=q'(s_j)=0,\\
  &e_{\s}(s_j)=s_j,\qquad e_{\s}'(s_j)=1,\\
  &e_{\bm{\nu}}(a_j)=a_j,\qquad
   e_{\bm{\nu}}(b_j)=b_j,\qquad\mbox{and}\qquad
   e_{\bm{\nu}}'(a_j)=e_{\bm{\nu}}'(b_j)=1.
\end{aligned}
\label{eq:frontier-identities}
\end{equation}
Define $g_a:=\nabla_{a_j}\bar\Phi(\s,\bm{\nu})$ and
$g_b:=\nabla_{b_j}\bar\Phi(\s,\bm{\nu})$.
Combining \eqref{eq:frontier-identities}, \eqref{eq:effective-link},
and~\eqref{eq:unconstrained-value} gives
\[
  g_a=2a_j-b_j-4,
  \qquad\mbox{and}\qquad
  g_b=-a_j+2b_j-s_j.
\]
Since $s_j>-1/10$ and $|g_a|,|g_b|\le\tau_0$, we obtain
\[
  b_j
  =\frac{4+2s_j+g_a+2g_b}{3}
  >\frac{4-\frac15-3\tau_0}{3}
  =\frac{61}{60}>1.
\]

It remains to examine the derivative with respect to $s_j$.
In stage $j$, \eqref{eq:coupling-v-low} and
\eqref{eq:frontier-identities} give zero contributions from
$C_{\rm st},C_{\rm en}$ and a contribution of $-b_j$ from $C_{\rm ex}$.
If $j<M$, then $q(s_{j+1})=0$ by \eqref{eq:frontier}, so
\eqref{eq:coupling-u-low} shows that the coupling terms in
stage $j+1$ contribute zero.
Since $Q$ is independent of $\s$, we obtain
\[
  \nabla_{s_j}\bar\Phi
  =R'(s_j)-b_j
  \le-b_j<-1,
\]
where we use $R'(s_j)\le0$ and $b_j>1$.
Hence $\norm{\nabla\bar\Phi(\s,\bm{\nu})}>1>\tau_0$, a contradiction.
Taking $g_0:=\tau_0$ proves the claim.
\end{proof}

\subsection{Initial gap}
\label{subsec:initial-gap}

We now verify Condition~\ref{cond:unscaled}~\textup{(C4)}
by bounding the initial gap.

\begin{proposition}
\label{prop:initial-gap}
There is a numerical constant $c_{\Delta}>0$, independent of $M,N,D$,
such that, for every admissible pair $(M,N)$ and every $D>0$,
\[
  \bar\Phi(\bz,\bz)
  -\inf_{(\s,\bm{\nu})\in\R^{3M}}\bar\Phi(\s,\bm{\nu})
  \le c_{\Delta}M.
\]
This verifies Condition~\ref{cond:unscaled}~\textup{(C4)}.
\end{proposition}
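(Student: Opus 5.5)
We bound the two terms separately: an upper bound on $\bar\Phi(\bz,\bz)$ and a lower bound on $\bar\Phi$ everywhere that is linear in $M$.

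\emph{Upper bound at the origin.} At $(\s,\bm{\nu})=(\bz,\bz)$ with $s_0=1$, we have $e_{\bm{\nu}}(0)=0$ and $e_{\s}(0)=0$, so $C_{\rm en}=C_{\rm ex}=0$ in every stage. Since $q(0)=0$, also $C_{\rm st}(s_{i-1},0)=0$. Moreover $R(0)=\frac{12}{5}p(0)-\frac{c_R+1}{10}[p(-1)-p(-10)]=0$. For the inner part, $\max_{\y\in\cY_D}\sum_i H(0,0;\y^{(i)})=0$, attained at $\y=\bz\in\cY_D$. Hence $\bar\Phi(\bz,\bz)=0$.

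\emph{Lower bound everywhere.} Since $\bz\in\cY_D$, we have $\bar\Phi(\s,\bm{\nu})\ge\fbar(\s,\bm{\nu};\bz)$, so it suffices to bound the dual-free function $\fbar(\s,\bm{\nu};\bz)$ from below by $-c_\Delta M$. With $\y=\bz$ the inner term is $H(a,b;\bz)=\frac{2-c_N}{2}(a^2+b^2)$. This requires $c_N\le 2$, or more precisely an $N$-independent lower bound on $(2-c_N)$; I expect this is among the properties collected in Lemma~\ref{lem:inner-interface}, since $c_N=(\bB_N)_{11}/(\bB_N)_{1N}$ and the normalization yielding $Q=a^2-ab+b^2$ forces the $a^2$ coefficient $\frac{2-c_N}{2}+\frac{c_N}{2}=1$. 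If $2-c_N$ could be negative, I would instead take the per-stage dual $\y^{(i)}=\w^\star(a_i,b_i)$ scaled into the ball. This fallback is messier, because the $s$-independent bound must survive the ball constraint, so I will aim to use $\y=\bz$.

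Per stage, I then need a constant lower bound for
\[
C_{\rm en}+C_{\rm ex}+C_{\rm st}+R(s_i)+\tfrac{2-c_N}{2}(a_i^2+b_i^2).
\]
The individual terms are controlled as follows.
\begin{itemize}
\item $|C_{\rm en}|\le 4\cdot\frac{43}{2}$, because $q\in[0,1]$ and $|e_{\bm{\nu}}|\le 43/2$.
\item $|C_{\rm ex}|\le \frac52\cdot\frac{43}{2}$, because $|e_{\s}|\le 5/2$.
\item $|C_{\rm st}|\le 24\cdot\frac{7}{2}$.
\end{itemize}
All these bounds come from Lemmas~\ref{lem:outer-gate} and~\ref{lem:identity-extensions}. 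The regularizer $R$ is bounded below because $p(-10t)\ge0$ and $p(10t-1)-p(10t-10)\le 9$ (its derivative $p'(10t-1)-p'(10t-10)$ is supported on $[0.1,1.1]$ and bounded by $1$), giving $R\ge -\frac{9(c_R+1)}{10}$. The quadratic term is nonnegative. Summing, every stage is bounded below by a numerical constant $-c_\Delta$ independent of $M,N,D$. Therefore $\inf\bar\Phi\ge -c_\Delta M$, and combined with $\bar\Phi(\bz,\bz)=0$ this gives the claim.

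\emph{Main obstacle.} The expected obstacle is the sign of $2-c_N$, that is, confirming from the estimates on $\bB_N$ (Lemma~\ref{lem:inner-interface} or \citet[Lemma~7]{li2021lower}) that $c_N\le2$. If instead only $|2-c_N|$ is bounded, I will fall back on the fact that $Q=a^2-ab+b^2\ge\frac12(a^2+b^2)\ge0$ and bound $V_D$ from below via a feasible scaled dual.
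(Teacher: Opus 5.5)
Your proposal is correct and follows the paper's proof. $\bar\Phi(\bz,\bz)=0$ because every outer term vanishes at the primal origin and $\y=\bz$ maximizes $\sum_i H(0,0;\y^{(i)})$. The lower bound $\bar\Phi\ge\fbar(\cdot\,;\bz)\ge-c_{\Delta}M$ then comes from the same uniform bounds on $C_{\rm en}$, $C_{\rm ex}$, $C_{\rm st}$, and $R$.

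The sign concern you flag is already settled by Lemma~\ref{lem:inner-interface}~\textup{(i)} and~\textup{(iii)}, which give $6/5\le c_N\le 8/5$ and hence $H(a,b;\bz)\ge0$, so the fallback is unnecessary. The only other difference is that the paper uses $C_{\rm st}\ge0$ where you use $|C_{\rm st}|\le 84$, which only changes the constant.
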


\begin{proof}[Proof of Proposition~\ref{prop:initial-gap}]
At the primal origin, $R$ and all three outer couplings vanish,
including at the first stage with $s_0=1$.
Lemma~\ref{lem:inner-interface}~\textup{(iii)} gives
$H(0,0;\w)\le0$, with equality at $\w=\bz$.
Since the dual origin is feasible, $\bar\Phi(\bz,\bz)=0$.

For any primal point, feasibility of $\y=\bz$ gives
$\bar\Phi(\s,\bm{\nu})\ge\fbar(\s,\bm{\nu};\bz)$.
At this dual point,
Lemma~\ref{lem:inner-interface}~\textup{(iii)} gives
$H(a_i,b_i;\bz)\ge0$ for every stage.
Lemmas~\ref{lem:outer-gate} and~\ref{lem:identity-extensions}
also give $0\le q\le1$, $|e_{\s}|\le5/2$, and
$|e_{\bm{\nu}}|\le43/2$.

Moreover, $C_{\rm st}(u,v)\ge0$: it vanishes when $u\ge1$,
while monotonicity gives $e_{\s}(u)\le e_{\s}(1)=1$ when $u<1$.
The same bounds applied to $C_{\rm en}$ and $C_{\rm ex}$ yield
\[
  C_{\rm st}\ge0,\qquad
  C_{\rm en}\ge-86,\qquad\mbox{and}\qquad
  C_{\rm ex}\ge-\frac{215}{4}.
\]
By Lemma~\ref{lem:gate-transformations}, $0\le p'\le1$, so
$0\le p(10t-1)-p(10t-10)\le9$.
Since $p\ge0$, \eqref{eq:phase-potential} gives
$R(t)\ge-(9/10)(c_R+1)$.

Set
\[
  c_{\Delta}:=\frac9{10}(c_R+1)+86+\frac{215}{4}.
\]
This is a numerical constant independent of $M,N,D$, and the
preceding bounds give
\[
  \bar\Phi(\s,\bm{\nu})
  \ge\fbar(\s,\bm{\nu};\bz)
  \ge-c_{\Delta}M.
\]
Together with $\bar\Phi(\bz,\bz)=0$, this implies
\[
  \bar\Phi(\bz,\bz)
  -\inf_{(\s,\bm{\nu})\in\R^{3M}}\bar\Phi(\s,\bm{\nu})
  \le c_{\Delta}M,
\]
as required.
\end{proof}

\subsection{Proofs of Theorems~\ref{thm:zr-lower} and~\ref{thm:deterministic}}
\label{subsec:proof-zr}

We first record the rescaling that transfers the verified unscaled
instance to the target parameters.

\begin{lemma}[Exact scaling]
\label{lem:scaling}
Assume Condition~\ref{cond:unscaled}~\textup{(C1)}.
Fix an admissible pair $(M,N)$ and $\lambda>0$, and set
$D:=D_{\cY}/\lambda$.
Let $(\fbar,\ord)$ be the corresponding unscaled hard instance.
Define
\[
  f(\x;\y)
  :=\frac{\ell\lambda^2}{\ell_0}
    \fbar\!\left(\frac{\x}{\lambda};
                 \frac{\y}{\lambda}\right)
\]
on $\R^{3M}\times\B_{D_{\cY}}^{MN}$.
Then $f$ is jointly $\ell$-smooth and concave in $\y$,
scaling preserves the coordinate supports of the saddle gradient, and
\begin{equation}
  \Phi(\x)
  =\frac{\ell\lambda^2}{\ell_0}
    \bar\Phi\!\left(\frac{\x}{\lambda}\right),
  \qquad
  \nabla\Phi(\x)
  =\frac{\ell\lambda}{\ell_0}
    \nabla\bar\Phi\!\left(\frac{\x}{\lambda}\right).
\label{eq:gradient-scaling}
\end{equation}
\end{lemma}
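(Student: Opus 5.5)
The argument is a direct change of variables. Write $\x'=\x/\lambda$ and $\y'=\y/\lambda$. The first thing to check is that the scaled dual domain is the right one: $\y\in\B_{D_{\cY}}^{MN}$ holds exactly when $\norm{\y}\le D_{\cY}/2$, that is, when $\norm{\y'}\le D_{\cY}/(2\lambda)=D/2$, that is, when $\y'\in\cY_D$. So $f$ is well defined on $\R^{3M}\times\B_{D_{\cY}}^{MN}$, and the map $\y\mapsto\y/\lambda$ is a bijection from $\B_{D_{\cY}}^{MN}$ onto $\cY_D$.

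Next come the derivatives, via the chain rule:
\[
\nabla f(\x;\y)=\frac{\ell\lambda}{\ell_0}\nabla\fbar(\x/\lambda;\y/\lambda),
\qquad
\nabla^2 f(\x;\y)=\frac{\ell}{\ell_0}\nabla^2\fbar(\x/\lambda;\y/\lambda).
\]
Smoothness follows without going through Hessians. For any two points,
\[
\norm{\nabla f(\x_1;\y_1)-\nabla f(\x_2;\y_2)}
\le\frac{\ell\lambda}{\ell_0}\cdot\ell_0\norm{(\x_1-\x_2,\y_1-\y_2)}/\lambda,
\]
using the $\ell_0$-smoothness of $\fbar$ from (C1); this gives $\ell$-smoothness. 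Concavity in $\y$ is kept because we compose with a linear map and multiply by the positive factor $\ell\lambda^2/\ell_0$; strong concavity with a rescaled modulus also carries over, though it is not needed. For the supports, $\nabla f$ at $(\x,\y)$ is a positive multiple of $\nabla\fbar$ at $(\x/\lambda,\y/\lambda)$. Dividing by $\lambda$ does not change which coordinates are zero, so $\supp_{\ord}(\x,\y)=\supp_{\ord}(\x/\lambda,\y/\lambda)$, and the gradient supports coincide in the same way. The zero-chain implication in Definition~\ref{def:saddle-zero-chain} therefore transfers verbatim.

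For the value function, the bijection of domains gives
\[
\Phi(\x)=\max_{\y\in\B_{D_{\cY}}^{MN}}\frac{\ell\lambda^2}{\ell_0}\fbar(\x/\lambda;\y/\lambda)
=\frac{\ell\lambda^2}{\ell_0}\max_{\y'\in\cY_D}\fbar(\x/\lambda;\y')
=\frac{\ell\lambda^2}{\ell_0}\bar\Phi(\x/\lambda).
\]
Differentiability of $\bar\Phi$ is part of (C1), established in Proposition~\ref{prop:function-class}. Applying the chain rule to the composition with $\x\mapsto\x/\lambda$ then yields the gradient identity in \eqref{eq:gradient-scaling}.

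There is no real obstacle here. The only step needing care is confirming that the diameter $D_{\cY}$ ball maps exactly onto $\cY_D$ under the choice $D=D_{\cY}/\lambda$; the rest is routine chain-rule bookkeeping.
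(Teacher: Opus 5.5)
Your proposal is correct and follows the same route as the paper: the chain-rule identity $\nabla f(\x;\y)=\frac{\ell\lambda}{\ell_0}\nabla\fbar(\x/\lambda;\y/\lambda)$ yields smoothness, concavity, and support preservation, and the bijection $\y\mapsto\y/\lambda$ from $\B_{D_{\cY}}^{MN}$ onto $\cY_D$, together with differentiation, yields \eqref{eq:gradient-scaling}. You also write out the Lipschitz computation and the check that the ball maps onto $\cY_D$, both of which the paper leaves implicit.
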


\begin{proof}[Proof of Lemma~\ref{lem:scaling}]
The chain rule gives
\[
  \nabla f(\x;\y)
  =\frac{\ell\lambda}{\ell_0}
    \nabla\fbar\!\left(\frac{\x}{\lambda};
                       \frac{\y}{\lambda}\right).
\]
This proves smoothness, concavity, and support preservation.
The map $\y\mapsto\y/\lambda$ carries
$\B_{D_{\cY}}^{MN}$ bijectively onto $\cY_D$, which together with
differentiation yields \eqref{eq:gradient-scaling}.
We finish the proof.
\end{proof}

With this rescaling in hand, we prove the zero-respecting lower bound.

\begin{proof}[Proof of Theorem~\ref{thm:zr-lower}]
Propositions~\ref{prop:function-class},
\ref{prop:saddle-zero-chain}, \ref{prop:terminal-gradient},
and~\ref{prop:initial-gap} verify
Conditions~\ref{cond:unscaled}~\textup{(C1)}-\textup{(C4)}.
Because Condition~\ref{cond:unscaled}~\textup{(C3)} remains valid
after decreasing $g_0$, we may replace $g_0$ by
$\min\{g_0,\ell_0\}$ and hence assume that $g_0\le\ell_0$.
Set
\[
  c_0:=\min\left\{
    \frac{c_Dg_0}{80\ell_0},
    \frac{g_0}{\sqrt{128c_{\Delta}\ell_0}}
  \right\},
  \qquad\mbox{and}\qquad
  c_1:=\frac{c_Dg_0^3}{256c_{\Delta}\ell_0^2}.
\]
Fix $\ell,D_{\cY},\Delta,\epsilon>0$ satisfying the assumptions
of the theorem, and choose
\begin{equation}
  \lambda:=\frac{4\ell_0\epsilon}{g_0\ell},
  \qquad
  D:=\frac{D_{\cY}}{\lambda},
  \qquad
  N:=\lfloor c_DD\rfloor,
  \qquad\mbox{and}\qquad
  M:=\left\lfloor
    \frac{\ell_0\Delta}{2c_{\Delta}\ell\lambda^2}
  \right\rfloor.
\label{eq:lower-bound-parameters}
\end{equation}
The assumed upper bound on $\epsilon$ gives
\[
  c_DD=\frac{c_DD_{\cY}}{\lambda}\ge20,
  \qquad\mbox{and}\qquad
  \frac{\ell_0\Delta}{2c_{\Delta}\ell\lambda^2}\ge4.
\]
Thus $N\ge20$ and $M\ge4$, so $(M,N)$ is admissible.
Moreover, \eqref{eq:lower-bound-parameters} gives $N\le c_DD$.

Let $(\fbar,\ord)$ be the corresponding unscaled instance,
and let $f$ be its scaling from Lemma~\ref{lem:scaling}.
The lemma gives joint $\ell$-smoothness and dual concavity,
and the scaled dual domain has diameter $D_{\cY}$.
Condition~\ref{cond:unscaled}~\textup{(C4)}, together with
\eqref{eq:gradient-scaling} and \eqref{eq:lower-bound-parameters}, gives
\[
  \Phi(\bz)-\inf\Phi
  \le\frac{\ell\lambda^2}{\ell_0}c_{\Delta}M
  \le\Delta.
\]
Thus $f\in\cF(\ell,D_{\cY},\Delta)$.

Now let $\sA\in\cA_{\rm zr}\cap\cA_{\rm det}$ run on $f$, and write
$(\x^{(t)},\y^{(t)})
=(\sA_{\x}^{(t)}[f],\sA_{\y}^{(t)}[f])$.
Support preservation, Definition~\ref{def:zero-respecting},
and Condition~\ref{cond:unscaled}~\textup{(C2)}
imply by induction that
\[
  \supp_{\ord}(\x^{(t)},\y^{(t)})\subseteq[t],
  \qquad 0\le t\le L.
\]
By Condition~\ref{cond:unscaled}~\textup{(C3)}, $s_M$ is the final
coordinate of $\ord$.
Hence $(\x^{(t)})_{s_M}=0$ for every $0\le t<L$.

Fix $0\le t<L$ and suppose that $\x^{(t)}$ is $\epsilon$-OS.
Set $\uvec:=\prox_{\Phi/(2\ell)}(\x^{(t)})$.
The standard identity
$\nabla\Phi_{1/(2\ell)}(\x^{(t)})
=2\ell(\x^{(t)}-\uvec)$
and Definition~\ref{def:stationary} give
\[
  \left|\left(\frac{\uvec}{\lambda}\right)_{s_M}\right|
  =\frac{|(\uvec-\x^{(t)})_{s_M}|}{\lambda}
  \le\frac{\norm{\uvec-\x^{(t)}}}{\lambda}
  \le\frac{\epsilon}{2\ell\lambda}
  =\frac{g_0}{8\ell_0}
  \le\frac18<\frac15.
\]
By Condition~\ref{cond:unscaled}~\textup{(C1)} and the scaling identity
for the value function, $\Phi$ is differentiable.
The proximal optimality condition,
Condition~\ref{cond:unscaled}~\textup{(C3)},
and \eqref{eq:gradient-scaling} therefore yield
\[
  \epsilon
  \ge\norm*{\nabla\Phi_{1/(2\ell)}(\x^{(t)})}
  =\norm*{\nabla\Phi(\uvec)}
  =\frac{\ell\lambda}{\ell_0}
    \norm*{\nabla\bar\Phi\!\left(\frac{\uvec}{\lambda}\right)}
  \ge\frac{\ell\lambda g_0}{\ell_0}
  =4\epsilon,
\]
a contradiction.
Thus no query with $t<L$ is $\epsilon$-OS.

Finally, $\lfloor r\rfloor\ge r/2$ for $r\ge2$
and $\lfloor r\rfloor+3\ge r$, together with
\eqref{eq:lower-bound-parameters}, give
\[
  L=M(N+3)
  \ge
  \frac{\ell_0\Delta}{4c_{\Delta}\ell\lambda^2}
  \frac{c_DD_{\cY}}{\lambda}
  =c_1\frac{\ell^2D_{\cY}\Delta}{\epsilon^3}.
\]
Since $\sA$ was arbitrary,
\[
  \cT_{\epsilon}\bigl(
    \cA_{\rm zr}\cap\cA_{\rm det},\{f\}
  \bigr)
  \ge c_1\frac{\ell^2D_{\cY}\Delta}{\epsilon^3},
\]
which proves the theorem.
\end{proof}

We now extend the lower bound to arbitrary deterministic algorithms.

\begin{proof}[Proof of Theorem~\ref{thm:deterministic}]
Use the constants $c_0,c_1$ from Theorem~\ref{thm:zr-lower},
and fix parameters satisfying its assumptions.
Set $T_0:=\lceil c_1\ell^2D_{\cY}\Delta/\epsilon^3\rceil$.
The proof of Theorem~\ref{thm:zr-lower} constructs an instance
$f\in\cF(\ell,D_{\cY},\Delta)$ on $\R^m\times\B_{D_{\cY}}^n$.
Since $L$ is an integer and
$L\ge c_1\ell^2D_{\cY}\Delta/\epsilon^3$,
we have $L\ge T_0$.
Thus every $\sZ\in\cA_{\rm zr}\cap\cA_{\rm det}$ satisfies
\[
  \norm*{\nabla\Phi_{1/(2\ell)}(\sZ_{\x}^{(t)}[f])}>\epsilon,
  \qquad 0\le t<T_0.
\]

Fix $\sA\in\cA_{\rm det}$.
Lemma~\ref{lem:resisting-oracle} provides
$\sZ\in\cA_{\rm zr}\cap\cA_{\rm det}$ and orthogonal embeddings
$\bU,\bV$ satisfying \eqref{eq:projected-transcript}.
Lemma~\ref{lem:rotation-covariance} gives
$f_{\bU,\bV}\in\cF(\ell,D_{\cY},\Delta)$ and, for every $0\le t<T_0$,
\[
\begin{aligned}
  \norm*{\nabla[\Phi_{\bU,\bV}]_{1/(2\ell)}
    \bigl(\sA_{\x}^{(t)}[f_{\bU,\bV}]\bigr)}
  &=
  \norm*{\nabla\Phi_{1/(2\ell)}
    \bigl(\bU^\top\sA_{\x}^{(t)}[f_{\bU,\bV}]\bigr)}=
  \norm*{\nabla\Phi_{1/(2\ell)}
    \bigl(\sZ_{\x}^{(t)}[f]\bigr)}
  >\epsilon.
\end{aligned}
\]
Thus $\sA$ needs at least $T_0$ queries on an instance in the target
class.
Since $\sA$ was arbitrary,
\[
  \cT_{\epsilon}\bigl(
    \cA_{\rm det},\cF(\ell,D_{\cY},\Delta)
  \bigr)
  \ge T_0
  \ge c_1\frac{\ell^2D_{\cY}\Delta}{\epsilon^3}.
\]
We finish the proof.
\end{proof}

\section{Proof of the Upper Bound}
\label{sec:upper-proof}

We prove Theorem~\ref{thm:upper} in three steps. First, we establish
Proposition~\ref{prop:anchor-shift}, which bounds the error of the translated \FOAM{} state.
Next, we prove Proposition~\ref{prop:warm-start} by constructing
an initial state through successive reductions of the dual
regularization parameter. Finally, we combine these bounds with the outer updates to
establish descent, stationarity, and the oracle complexity.
The implementation of \FOAM{} using feasible queries and the
initialization at the first regularization level are given in
Appendices~\ref{app:foam-implementation} and~\ref{app:base-initialization},
respectively.

\subsection{Proof of Proposition~\ref{prop:anchor-shift}}
\label{subsec:upper-stability}
Fix $0<r_{\y}\le\ell/8$. We bound the error of the translated state
for the new proximal center. The translation lets us express the
new conjugate objective as the old one plus a linear term and a
constant. We then compare the distance and objective gap that define
the Lyapunov function. For a positive definite matrix $\mathbf H$, write
$\norm{\bm v}_{\mathbf H}^2:=\bm v^\top\mathbf H\bm v$.

\begin{proof}[Proof of Proposition~\ref{prop:anchor-shift}]
Let $\z,\z'\in\cX$ with $\z'=\z+\bm d$, and write
$S=(\omegavec,\y,\omegavec_f,\y_f)$. The translated state is
\[
  \widetilde S
  =(\omegavec-2\ell\bm d,\y,
    \omegavec_f-2\ell\bm d,\y_f).
\]
Expanding the definition of the conjugate objective gives
\begin{equation}
  P_{\z',r_{\y}}(\omegavec-2\ell\bm d,\y)
  =P_{\z,r_{\y}}(\omegavec,\y)
    -2\ip*{\bm d}{\omegavec}+C,
\label{eq:upper-conjugate-translation}
\end{equation}
where $C$ is independent of $(\omegavec,\y)$.
Set $\bm u:=(\omegavec,\y)$ and
$\bm u_f:=(\omegavec_f,\y_f)$, and define
\[
  \widetilde P(\bm u)
  :=P_{\z,r_{\y}}(\bm u)-\ip*{\bm h}{\bm u},
  \qquad\mbox{and}\qquad
  \bm h:=(2\bm d,\mathbf{0}).
\]
Let $\bm u^\star$ and $\widetilde{\bm u}^{\star}$ minimize
$P_{\z,r_{\y}}$ and $\widetilde P$, respectively.
Equation~\eqref{eq:upper-conjugate-translation} gives
\[
  \bm u^\star
  =(\omegavec_{\z,r_{\y}}^\star,\y_{\z,r_{\y}}^\star),
  \qquad\mbox{and}\qquad
  \widetilde{\bm u}^{\star}
  =(\omegavec_{\z',r_{\y}}^\star+2\ell\bm d,
    \y_{\z',r_{\y}}^\star).
\]
Thus $\widetilde{\bm u}^{\star}$ is the new minimizer expressed in
the original coordinates. Define
\[
  \mathbf H_{r_{\y}}
  :=\operatorname{diag}(\ell^{-1}\bI_m,r_{\y}\bI_n),
  \qquad
  \mathbf B_{r_{\y}}
  :=\operatorname{diag}(\alpha_{r_{\y}}\ell^{-1}\bI_m,r_{\y}\bI_n),
\]
where $\alpha_{r_{\y}}=\sqrt{8r_{\y}/\ell}\le1$.
The objective $P_{\z,r_{\y}}$ is $1$-strongly convex with respect to
$\norm{\cdot}_{\mathbf H_{r_{\y}}}$, while $\mathbf B_{r_{\y}}$ gives
the weights in the distance term of $\mathcal L_{\z,r_{\y}}$.
Strong convexity and the optimality conditions for the two minimizers
give
\begin{equation}
  \norm*{\widetilde{\bm u}^{\star}-\bm u^\star}_{\mathbf H_{r_{\y}}}^2
  \le\norm{\bm h}_{\mathbf H_{r_{\y}}^{-1}}^2
  =4\ell\norm{\bm d}^2.
\label{eq:upper-tilted-minimizer-bound}
\end{equation}
Since $\mathbf B_{r_{\y}}\preceq\mathbf H_{r_{\y}}$,
\eqref{eq:upper-tilted-minimizer-bound} bounds the distance term at
the new center as follows:
\begin{align*}
  2\norm*{\bm u-\widetilde{\bm u}^{\star}}_{\mathbf B_{r_{\y}}}^2
  &\le4\norm{\bm u-\bm u^\star}_{\mathbf B_{r_{\y}}}^2
       +4\norm*{\widetilde{\bm u}^{\star}-\bm u^\star}_{\mathbf B_{r_{\y}}}^2\\
  &\le4\norm{\bm u-\bm u^\star}_{\mathbf B_{r_{\y}}}^2
       +16\ell\norm{\bm d}^2.
\end{align*}

We next bound the objective gap for the fast pair. Applying the same strong convexity inequality at $\bm u_f$ gives
\begin{align}\label{eq:mid}
      P_{\z,r_{\y}}(\bm u_f)-P_{\z,r_{\y}}^\star
  \ge\frac12\norm{\bm u_f-\bm u^\star}_{\mathbf H_{r_{\y}}}^2.
\end{align}
Since $\bm u^\star$ minimizes $P_{\z,r_{\y}}$, strong convexity gives,
for every $\bm u$,
\[
  \widetilde P(\bm u)
  \ge
  P_{\z,r_{\y}}^\star-\ip*{\bm h}{\bm u^\star}
  +\frac12\norm{\bm u-\bm u^\star}_{\mathbf H_{r_{\y}}}^2
  -\ip*{\bm h}{\bm u-\bm u^\star}.
\]
Minimizing the last two terms over $\bm u$ yields
\[
  \min\widetilde P
  \ge
  P_{\z,r_{\y}}^\star-\ip*{\bm h}{\bm u^\star}
  -\frac12\norm{\bm h}_{\mathbf H_{r_{\y}}^{-1}}^2.
\]
% Applying the same strong convexity inequality at an arbitrary point
% and minimizing the resulting quadratic lower bound gives
% \[
%   \min\widetilde P
%   \ge P_{\z,r_{\y}}^\star-\ip*{\bm h}{\bm u^\star}
%        -\frac12\norm{\bm h}_{\mathbf H_{r_{\y}}^{-1}}^2.
% \]\Siyu{This is correct, but I feel a bit confused when reading it.}

Using this bound, together with the bound~\eqref{eq:mid} and Young's inequality, we obtain
\begin{align*}
  \widetilde P(\bm u_f)-\min\widetilde P
  &\le P_{\z,r_{\y}}(\bm u_f)-P_{\z,r_{\y}}^\star
       -\ip*{\bm h}{\bm u_f-\bm u^\star}
       +\frac12\norm{\bm h}_{\mathbf H_{r_{\y}}^{-1}}^2\\
  &\le2\bigl(P_{\z,r_{\y}}(\bm u_f)-P_{\z,r_{\y}}^\star\bigr)
       +4\ell\norm{\bm d}^2.
\end{align*}
By \eqref{eq:upper-conjugate-translation}, the left-hand side is the
objective gap for the translated fast pair at the new center.
Combining these estimates with the definition of
$\mathcal L$ in \eqref{eq:tracking-energy} gives
\begin{align*}
  \mathcal L_{\z',r_{\y}}(\widetilde S)
  &=2\norm*{\bm u-\widetilde{\bm u}^{\star}}_{\mathbf B_{r_{\y}}}^2
    +2\bigl(\widetilde P(\bm u_f)-\min\widetilde P\bigr)\\
  &\le2\mathcal L_{\z,r_{\y}}(S)+24\ell\norm{\bm d}^2.
\end{align*}
This proves \eqref{eq:anchor-tracking}. Applying the contraction in
Lemma~\ref{prop:foam-interface} at $(\z',r_{\y})$ gives, for every
$\rho\in(0,1)$,
\[
  \mathcal L_{\z',r_{\y}}
  \bigl(\FOAM_{\z',r_{\y}}(\widetilde S;\rho)\bigr)
  \le\rho\bigl(
    2\mathcal L_{\z,r_{\y}}(S)+24\ell\norm{\bm d}^2
  \bigr),
\]
which proves \eqref{eq:interface-tracked-block}.
\end{proof}
\subsection{Proof of Proposition~\ref{prop:warm-start}}
\label{subsec:warm-start}

We prove Proposition~\ref{prop:warm-start} by constructing a state
at the first regularization level $r_{\y}^0=\ell/8$ and then
decreasing the dual regularization parameter by a factor of four
at each level. The proximal center remains fixed at
$\z^0=\mathbf{0}$ throughout. We first state the guarantee for
the initial state.

\begin{lemma}[Initial \FOAM{} state construction]
\label{lem:startup-state}
Let $\z^0=\mathbf{0}$ and $r_{\y}^0:=\ell/8$.
A state $\widehat S^0$ satisfying
\[
  \mathcal{L}_{\z^0,r_{\y}^0}(\widehat S^0)
  \le5\left(\Phi(\z^0)-\inf\Phi
              +\frac32r_{\y}^0D_{\cY}^2\right)
\]
can be constructed using a universal constant number of first-order
saddle-oracle calls.
\end{lemma}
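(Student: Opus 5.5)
The plan is to write down an explicit feasible starting state at the center $\z^0=\bz$ and the level $r_{\y}^0=\ell/8$, bound its Lyapunov value by a constant multiple of a single conjugate objective gap, and then spend one \FOAM{} call with a numerical reduction factor to reach the constant $5$. At this level $\alpha_{r_{\y}^0}=1$, so the weight matrices satisfy $\mathbf B_{r_{\y}^0}=\mathbf H_{r_{\y}^0}$. Also $\sqrt{\ell/r_{\y}^0}=\sqrt8$, so by Lemma~\ref{prop:foam-interface} any \FOAM{} call with a numerical $\rho$ costs $\cO(1)$ oracle queries.

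\textbf{Choice of state.} I would take the feasible dual point $\y_f=\y=\bz\in\cY$. For the primal part, I would approximately solve
\[
  \min_{\x\in\cX}\bigl\{f(\x;\bz)+\ell\norm{\x}^2\bigr\}.
\]
This problem is $\ell$-strongly convex and $3\ell$-smooth, so it has condition number $3$. A constant number of projected gradient steps from $\bz$ therefore reduce its suboptimality by any prescribed numerical factor relative to the initial gap. That initial gap is at most
\[
  f(\bz;\bz)-\inf_{\x\in\cX}\bigl\{f(\x;\bz)+\ell\norm{\x}^2\bigr\}\le\Phi(\z^0)-\inf\Phi,
\]
because $f(\bz;\bz)\le\Phi(\bz)$ and $f(\x;\bz)\le\Phi(\x)$. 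Let $\hat\x$ be the approximate minimizer. I would set $\omegavec_f=\omegavec=-\ell\hat\x$, the conjugate variable corresponding to $\hat\x$ through $\omegavec^\star=-\ell\x^\star$.

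\textbf{Bounding the gap.} I would bound $P_{\z^0,r_{\y}^0}(\omegavec_f,\bz)-P^\star_{\z^0,r_{\y}^0}$ in two parts. First, write $P^\star=-p_r(\z^0)$ up to the sign convention of the conjugate formulation. Then $p_r(\bz)\le\Phi_r(\bz)\le\Phi(\bz)$, while the value at $\y=\bz$ is controlled by the primal subproblem value, which is at least $\inf\Phi-\tfrac{r_{\y}}{2}D_{\cY}^2$ by \eqref{eq:value-bias}. Second, the mismatch between $\omegavec_f$ and the exact best response at $\y=\bz$ is controlled by the suboptimality of $\hat\x$. Together these should give a gap of order $\Phi(\z^0)-\inf\Phi+\tfrac32 r_{\y}^0D_{\cY}^2$. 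The $\tfrac32$ comes from collecting the $r_{\y}$-bias terms: one from \eqref{eq:value-bias}, and one from $\tfrac{r_{\y}}2\norm{\y^\star}^2\le\tfrac{r_{\y}}2D_{\cY}^2$ when comparing with the dual optimum.

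\textbf{Converting to $\mathcal L$.} Since the slow and fast pairs coincide and $\mathbf B=\mathbf H$, $1$-strong convexity of $P$ in $\norm{\cdot}_{\mathbf H}$ gives
\[
  \mathcal L\le2\norm{\bm u-\bm u^\star}_{\mathbf H}^2+2\,\mathrm{gap}\le6\,\mathrm{gap}.
\]
A single call $\FOAM_{\z^0,r_{\y}^0}(\cdot;5/6)$, or with any smaller numerical $\rho$, then yields the claimed bound with factor $5$ at $\cO(1)$ cost, by \eqref{eq:block-contraction}.

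The main obstacle is the second part of the gap bound: relating the conjugate gap to primal quantities with the exact constant. This requires carefully unfolding the supremum in $P$ at $\omegavec_f=-\ell\hat\x$. It must also handle the projection onto $\cX$, since $\cX$ need not be the whole space, through the inner supremum, which is a strongly concave maximization in $\x$. I would first try to show the identity "conjugate gap at $(-\ell\hat\x,\bz)$ $=$ primal-dual gap of $(\hat\x,\bz)$ for $F_r$" exactly. If only a constant-factor version holds, I would absorb the slack into a smaller contraction factor $\rho$.
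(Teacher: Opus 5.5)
Your proposal has a genuine gap, and a numerical contraction factor cannot repair it. Because you fix $\y_f=\y=\bz$, then for every choice of $\hat\x$ the fast-pair term of $\mathcal{L}_{\z^0,r_{\y}^0}$ satisfies
\[
  P_{\z^0,r_{\y}^0}(\omegavec_f,\bz)-P_{\z^0,r_{\y}^0}^\star
  \ge\min_{\omegavec}P_{\z^0,r_{\y}^0}(\omegavec,\bz)-P_{\z^0,r_{\y}^0}^\star
  =p_r(\bz)-\min_{\x\in\cX}\bigl\{f(\x;\bz)+\ell\norm{\x}^2\bigr\}.
\]
So it contains the dual suboptimality of the origin. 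Your claim that $\min_{\x\in\cX}\{f(\x;\bz)+\ell\norm{\x}^2\}\ge\inf\Phi-\frac{r_{\y}}{2}D_{\cY}^2$ follows from \eqref{eq:value-bias} is false. That inequality compares $\Phi$ with $\Phi_r$, both of which maximize over $\y$. It says nothing about $f(\cdot;\bz)$, which can lie far below $\Phi$. The fact $f(\x;\bz)\le\Phi(\x)$ that you invoke points the wrong way for a lower bound.

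Here is a concrete counterexample. Take $\cX=\R$, $\cY=[-D_{\cY}/2,D_{\cY}/2]$, and $f(x;y)=\ell c y$ with $c\ge D_{\cY}/16$. This $f$ lies in $\cF(\ell,D_{\cY},\Delta)$ and $\Phi\equiv\ell cD_{\cY}/2$, so $\Phi(\z^0)-\inf\Phi=0$. Moreover, $\hat x=0$ solves your primal subproblem exactly. Yet $P_{\z^0,r_{\y}^0}(0,0)=0$ while $P^\star_{\z^0,r_{\y}^0}=-\max_{|y|\le D_{\cY}/2}\{\ell cy-\frac{\ell}{16}y^2\}$. Hence $\mathcal{L}_{\z^0,r_{\y}^0}\ge\ell cD_{\cY}-\ell D_{\cY}^2/32$, which is unbounded in $c$, while the target is $\frac{15}{16}\ell D_{\cY}^2$. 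A \FOAM{} call with numerical $\rho$ only multiplies this by a constant. Your argument would need $\cO(\log(c/D_{\cY}))$ steps, a cost depending on $\norm{\nabla_{\y}f(\bz;\bz)}/(\ell D_{\cY})$, which the function class does not control. Your primal part can be rescued up to an extra $\cO(\ell D_{\cY}^2)$ term using concavity and smoothness in $\y$. The dual suboptimality of $\bz$ cannot be rescued.

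The missing idea has two parts. The initial dual point must respond to $\nabla_{\y}f$. The objective gap must then be bounded by a distance to the saddle point, not by function values at a fixed dual point. The paper runs one relative-prox step (Algorithm~\ref{alg:app-feasible-rprox}) from $\bm u^0=(\omegavec_g,\y_g)=(\bz,\bz)$ and sets both pairs equal to its output $\bm u_f$. In the example above, that step moves the dual point to the boundary of $\cY$. Define $d_0:=\norm{\bm u^0-\bm u^\star}_{\mathbf H^0}$, $d_f:=\norm{\bm u_f-\bm u^\star}_{\mathbf H^0}$, and $d_{0f}:=\norm{\bm u_f-\bm u^0}_{\mathbf H^0}$. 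The certificate yields $\bm s\in\partial P_{\z^0,r_{\y}^0}(\bm u_f)$ with $\norm{\bm s}_{(\mathbf H^0)^{-1}}\le2d_{0f}$. Strong monotonicity then gives $d_f^2\le d_0^2/2$ and $d_{0f}^2\le\frac32d_0^2$. Hence the gap is at most $\ip{\bm s}{\bm u_f-\bm u^\star}\le2d_fd_{0f}\le\sqrt3\,d_0^2$, and $\mathcal{L}\le(1+2\sqrt3)d_0^2\le5d_0^2$. Here $d_0^2=\ell\norm{\x^\star}^2+r_{\y}^0\norm{\y^\star}^2$. The dual part is at most $r_{\y}^0D_{\cY}^2$ by compactness alone, with no gradient information needed. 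The primal part is at most $\Phi(\z^0)-\inf\Phi+\frac12r_{\y}^0D_{\cY}^2$ by the proximal inequality for $\Phi_{r^0}$ together with \eqref{eq:value-bias}. That sum is the actual source of the constant $\frac32$.
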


The construction and proof are given in
Appendix~\ref{app:base-initialization}.
Choose $\widehat S^0$ using this construction. Since
$\Phi(\z^0)-\inf\Phi\le\Delta$, we have
\begin{equation}
  \mathcal{L}_{\z^0,r_{\y}^0}(\widehat S^0)
  \le5\left(\Delta+\frac32r_{\y}^0D_{\cY}^2\right)
  \le\widehat B^0,
  \qquad
  \widehat B^0:=8(\Delta+r_{\y}^0D_{\cY}^2).
  \label{eq:upper-base-budget}
\end{equation}

We next bound the change in the Lyapunov function when the
regularization parameter is divided by four and the state is
retained.

\begin{lemma}[Lyapunov comparison across regularization levels]
\label{lem:curvature-transfer}
For every $\z\in\cX$, $0<r_{\y}\le\ell/8$, and every state $S$ with
$\mathcal{L}_{\z,r_{\y}}(S)<\infty$,
\begin{equation}
  \mathcal{L}_{\z,r_{\y}/4}(S)
  \le\mathcal{L}_{\z,r_{\y}}(S)
       +\frac{27}{4}r_{\y}D_{\cY}^2.
  \label{eq:upper-regularization-comparison}
\end{equation}
\end{lemma}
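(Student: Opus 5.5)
The plan is to compare the three terms of $\mathcal{L}_{\z,r_{\y}/4}(S)$ with those of $\mathcal{L}_{\z,r_{\y}}(S)$ one at a time. Everything rests on one observation. Write $r':=r_{\y}/4$. The two conjugate objectives differ only in the dual quadratic:
\[
  P_{\z,r'}(\omegavec,\y)=P_{\z,r_{\y}}(\omegavec,\y)-\frac{3r_{\y}}{8}\norm{\y}^2 .
\]
Both objectives have the same effective domain, which forces $\y\in\cY$. Since $\bz\in\cY$ and $\diam(\cY)\le D_{\cY}$, every feasible $\y$ satisfies $\norm{\y}\le D_{\cY}$. Hence on that domain
\[
  -\tfrac{3r_{\y}}{8}D_{\cY}^2\le P_{\z,r'}-P_{\z,r_{\y}}\le0 .
\]
This is the same mechanism as the value bias~\eqref{eq:value-bias} in Lemma~\ref{lem:smoothing}.

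\textbf{Step 1: the fast-pair gap.} From the sandwich, $P_{\z,r'}(\bm u_f)\le P_{\z,r_{\y}}(\bm u_f)$. Also $P_{\z,r'}^\star\ge P_{\z,r_{\y}}^\star-\frac{3r_{\y}}{8}D_{\cY}^2$, obtained by evaluating at the minimizer of $P_{\z,r'}$. Together these give
\[
  2\bigl(P_{\z,r'}(\bm u_f)-P_{\z,r'}^\star\bigr)\le2\bigl(P_{\z,r_{\y}}(\bm u_f)-P_{\z,r_{\y}}^\star\bigr)+\tfrac34 r_{\y}D_{\cY}^2 .
\]

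\textbf{Step 2: the minimizer shift.} Let $\bm u^\star,\bm u'^\star$ be the minimizers at levels $r_{\y}$ and $r'$. As in the proof of Proposition~\ref{prop:anchor-shift}, I use that $P_{\z,r_{\y}}$ is $1$-strongly convex in $\norm{\cdot}_{\mathbf H_{r_{\y}}}$. This gives
\[
  \tfrac12\norm{\bm u'^\star-\bm u^\star}_{\mathbf H_{r_{\y}}}^2\le P_{\z,r_{\y}}(\bm u'^\star)-P_{\z,r_{\y}}^\star .
\]
I bound the right-hand side in two moves. First, $P_{\z,r_{\y}}(\bm u'^\star)\le P_{\z,r'}^\star+\frac{3r_{\y}}8D_{\cY}^2$. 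Second, $P_{\z,r_{\y}}^\star\ge P_{\z,r'}(\bm u^\star)\ge P_{\z,r'}^\star$. Hence
\[
  \norm{\bm u'^\star-\bm u^\star}_{\mathbf H_{r_{\y}}}^2\le\tfrac34r_{\y}D_{\cY}^2 .
\]

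\textbf{Step 3: the slow-pair distance terms.} Write the weighted-norm form used in the proof of Proposition~\ref{prop:anchor-shift}, namely $2\norm{\bm u-\bm u^\star}_{\mathbf B_{r_{\y}}}^2$. Since $\alpha_{r'}=\alpha_{r_{\y}}/2$, the new weight matrix is
\[
  \mathbf B_{r'}=\operatorname{diag}\bigl(\tfrac{\alpha_{r_{\y}}}{2\ell}\bI_m,\tfrac{r_{\y}}4\bI_n\bigr)\preceq\tfrac12\mathbf B_{r_{\y}}.
\]
Combining this with $(a+b)^2\le2a^2+2b^2$ and $\mathbf B_{r_{\y}}\preceq\mathbf H_{r_{\y}}$ gives
\[
  2\norm{\bm u-\bm u'^\star}_{\mathbf B_{r'}}^2\le\norm{\bm u-\bm u'^\star}_{\mathbf B_{r_{\y}}}^2\le2\norm{\bm u-\bm u^\star}_{\mathbf B_{r_{\y}}}^2+2\norm{\bm u'^\star-\bm u^\star}_{\mathbf H_{r_{\y}}}^2 .
\]
By Step 2, this is at most the old distance term plus $\tfrac32r_{\y}D_{\cY}^2$.

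Adding Steps 1 and 3 yields an additive error of $\tfrac94 r_{\y}D_{\cY}^2\le\tfrac{27}{4}r_{\y}D_{\cY}^2$, which proves~\eqref{eq:upper-regularization-comparison}. The main point needing care is Step 2 together with the domain issue. The slow dual iterate $\y$ may lie outside $\cY$, but this causes no trouble: the sandwich is only ever applied at the fast pair and at the two minimizers, all of which lie in $\cY$. The slow pair enters only through the triangle inequality. I expect to verify once more that $\mathbf B_{r_{\y}}\preceq\mathbf H_{r_{\y}}$, which holds because $\alpha_{r_{\y}}\le1$, and that finiteness of $\mathcal{L}_{\z,r_{\y}}(S)$ transfers to level $r'$. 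The latter holds because the two objectives have the same domain.
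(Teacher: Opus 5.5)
Your proof is correct and has the same structure as the paper's. Step~1 is exactly the paper's fast-pair estimate. Step~3 is the paper's squared triangle inequality, with your observation $\mathbf B_{r'}\preceq\frac12\mathbf B_{r_{\y}}$ packaging the paper's use of $\alpha_{r'}=\alpha_{r_{\y}}/2$.

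The real difference is the minimizer-shift estimate in Step~2. The paper argues at first order. Since $0\in\partial P_{\z,r_{\y}}(\bm u^\star)$, the identity $P_{\z,r'}=P_{\z,r_{\y}}-\frac{3r_{\y}}{8}\norm{\y}^2$ gives $(\bz,-\frac{3r_{\y}}{4}\y_{\z,r_{\y}}^\star)\in\partial P_{\z,r'}(\bm u^\star)$. Strong monotonicity of $\partial P_{\z,r'}$ in the weaker norm $\norm{\cdot}_{\mathbf H_{r'}}$ then yields $\norm{\bm u^\star-\bm u'^\star}_{\mathbf H_{r'}}^2\le\frac94r_{\y}D_{\cY}^2$. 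This controls the dual block only with weight $r_{\y}/4$, so the paper uses it just for the $\omegavec$-block and bounds the dual block separately by the diameter. That gives $\frac{11}{2}\le6$ and hence $6+\frac34=\frac{27}{4}$.

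Your zeroth-order argument combines strong convexity of $P_{\z,r_{\y}}$ at its own minimizer with the value sandwich evaluated at the two minimizers. It gives $\norm{\bm u'^\star-\bm u^\star}_{\mathbf H_{r_{\y}}}^2\le\frac34r_{\y}D_{\cY}^2$ in the stronger norm, controlling both blocks at once. No separate diameter bound is needed, and the additive constant improves to $\frac94$.

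The paper's route needs only a subdifferential shift and a standard monotonicity inequality. Yours needs only function values and is sharper, though either constant suffices for the induction in Proposition~\ref{prop:warm-start}.
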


\begin{proof}[Proof of Lemma~\ref{lem:curvature-transfer}]
Set $r_{\y}':=r_{\y}/4$, and let $\bm u_{r_{\y}}^\star$ and
$\bm u_{r_{\y}'}^\star$ minimize $P_{\z,r_{\y}}$ and
$P_{\z,r_{\y}'}$, respectively. 
The definition of the conjugate objective gives
\begin{equation}
  P_{\z,r_{\y}'}(\omegavec,\y)
  =P_{\z,r_{\y}}(\omegavec,\y)
    -\frac{3r_{\y}}8\norm*{\y}^2.
\label{eq:upper-regularization-identity}
\end{equation}
Thus
$(\mathbf{0},-3r_{\y}\y_{\z,r_{\y}}^\star/4)
\in\partial P_{\z,r_{\y}'}(\bm u_{r_{\y}}^\star)$. Define $\mathbf H_{r_{\y}'}
:=\operatorname{diag}(\ell^{-1}\bI_m,r_{\y}'\bI_n)$.
Strong convexity gives
\begin{equation}
\begin{aligned}
  \norm*{
    \bm u_{r_{\y}}^\star-\bm u_{r_{\y}'}^\star
  }_{\mathbf H_{r_{\y}'}}^2
  &\le
  \norm*{
    \left(\mathbf{0},
      -\frac{3r_{\y}}4\y_{\z,r_{\y}}^\star
    \right)
  }_{\mathbf H_{r_{\y}'}^{-1}}^2=\frac{9r_{\y}}4\norm*{\y_{\z,r_{\y}}^\star}^2
  \le\frac{9r_{\y}}4D_{\cY}^2,
\end{aligned}
\label{eq:upper-regularization-minimizers}
\end{equation}
where we used $r_{\y}'=r_{\y}/4$ and the bound
$\norm{\y_{\z,r_{\y}}^\star}\le D_{\cY}$.
Using $\alpha_{r_{\y}'}=\alpha_{r_{\y}}/2$ and the squared
triangle inequality, we obtain
\begin{align*}
  &\frac{2\alpha_{r_{\y}'}}{\ell}
     \norm*{\omegavec-\omegavec_{\z,r_{\y}'}^\star}^2
   +2r_{\y}'\norm*{\y-\y_{\z,r_{\y}'}^\star}^2\\
  \le{}&
   \frac{2\alpha_{r_{\y}}}{\ell}
     \norm*{\omegavec-\omegavec_{\z,r_{\y}}^\star}^2
   +r_{\y}\norm*{\y-\y_{\z,r_{\y}}^\star}^2+\frac{2\alpha_{r_{\y}}}{\ell}
     \norm*{
       \omegavec_{\z,r_{\y}}^\star
       -\omegavec_{\z,r_{\y}'}^\star
     }^2
   +r_{\y}\norm*{
       \y_{\z,r_{\y}}^\star-\y_{\z,r_{\y}'}^\star
     }^2\\
  \le{}&
   \frac{2\alpha_{r_{\y}}}{\ell}
     \norm*{\omegavec-\omegavec_{\z,r_{\y}}^\star}^2
   +2r_{\y}\norm*{\y-\y_{\z,r_{\y}}^\star}^2
   +6r_{\y}D_{\cY}^2.
\end{align*}
For the last inequality, \eqref{eq:upper-regularization-minimizers}
and $\alpha_{r_{\y}}\le1$ give $\frac{2\alpha_{r_{\y}}}{\ell}
\norm*{
\omegavec_{\z,r_{\y}}^\star
-\omegavec_{\z,r_{\y}'}^\star
}^2
\le
\frac{9}{2}r_{\y}D_{\cY}^2$. 
Moreover, since both
$\y_{\z,r_{\y}}^\star$ and
$\y_{\z,r_{\y}'}^\star$ belong to $\cY$, $r_{\y}
\norm*{
\y_{\z,r_{\y}}^\star-\y_{\z,r_{\y}'}^\star
}^2
\le
r_{\y}D_{\cY}^2$.
Thus the two terms contribute at most
$\frac{11}{2}r_{\y}D_{\cY}^2\le6r_{\y}D_{\cY}^2$.
 
For the fast pair $\bm u_f:=(\omegavec_f,\y_f)$,
\eqref{eq:upper-regularization-identity} and the bound
$\norm{\y}\le D_{\cY}$ on $\cY$ give
\[
  P_{\z,r_{\y}'}(\bm u_f)-P_{\z,r_{\y}'}^\star
  \le P_{\z,r_{\y}}(\bm u_f)-P_{\z,r_{\y}}^\star
       +\frac{3r_{\y}}8D_{\cY}^2.
\]
Combining the two estimates therefore gives
\[
  \mathcal L_{\z,r_{\y}/4}(S)
  \le\mathcal L_{\z,r_{\y}}(S)
       +\frac{27}{4}r_{\y}D_{\cY}^2.
\]
We finish the proof.
\end{proof}

We now use the preceding estimate to construct the initialization.
Starting from $\widehat S^0$ at $r_{\y}^0=\ell/8$, define
\begin{equation}
  J:=\min\left\{
    j\in\N_0\;\middle|\;
    \frac{r_{\y}^0}{4^j}\le r_{\y,\epsilon}
  \right\},
  \qquad\mbox{and}\qquad
  r_{\y}^j:=\frac{r_{\y}^0}{4^j},
  \quad j=0,\ldots,J.
\label{eq:upper-initialization-levels}
\end{equation}
% \Siyu{I feel like the capital letter '$J$' looks a bit unclear and is easy to confuse with the lowercase '$j$'.}
For $j=1,\ldots,J$, set
\begin{align}
  \widehat S^j
  &:=\FOAM_{\z^0,r_{\y}^j}
       \left(\widehat S^{j-1};\frac18\right),\qquad\mbox{and}
\label{eq:warm-state-recursion}\\
  \widehat B^j
  &:=\frac18\left(
       \widehat B^{j-1}
       +\frac{27}{4}r_{\y}^{j-1}D_{\cY}^2
     \right).
\label{eq:warm-budget-recursion}
\end{align}
The initialization returned to
Algorithm~\ref{alg:tracked-foam-overview} is
\[
  r_{\y}:=r_{\y}^J,
  \qquad
  S^0:=\widehat S^J,
  \qquad\mbox{and}\qquad
  B^0:=15(\Delta+r_{\y}D_{\cY}^2).
\] 
At each level, $\widehat B^j$ bounds the error of $\widehat S^j$.
We verify these bounds below and show that the final state
satisfies $\mathcal L_{\z^0,r_{\y}}(S^0)\le B^0$.

\begin{proof}[Proof of Proposition~\ref{prop:warm-start}]
We prove by induction that
\[
  \mathcal L_{\z^0,r_{\y}^j}(\widehat S^j)
  \le\widehat B^j
  \le15(\Delta+r_{\y}^jD_{\cY}^2),
  \qquad j=0,\ldots,J.
\]
The case $j=0$ follows from \eqref{eq:upper-base-budget}.
Suppose both inequalities hold at level $j-1$.
The contraction in Lemma~\ref{prop:foam-interface} and
\eqref{eq:upper-regularization-comparison} give
\begin{align*}
  \mathcal L_{\z^0,r_{\y}^j}(\widehat S^j)
  &\le\frac18
       \mathcal L_{\z^0,r_{\y}^j}(\widehat S^{j-1})\le\frac18\left(
       \widehat B^{j-1}
       +\frac{27}{4}r_{\y}^{j-1}D_{\cY}^2
     \right)
   =\widehat B^j,
\end{align*}
where we used \eqref{eq:warm-state-recursion}
and~\eqref{eq:warm-budget-recursion}.
Moreover, the induction hypothesis and
$r_{\y}^{j-1}=4r_{\y}^j$ give
\begin{align*}
  \widehat B^j
  &\le\frac18\left[
       15(\Delta+r_{\y}^{j-1}D_{\cY}^2)
       +\frac{27}{4}r_{\y}^{j-1}D_{\cY}^2
     \right]=\frac{15}{8}\Delta+\frac{87}{8}r_{\y}^jD_{\cY}^2\le15(\Delta+r_{\y}^jD_{\cY}^2).
\end{align*}
This completes the induction.
Consequently,
\[
  \mathcal L_{\z^0,r_{\y}}(S^0)
  \le\widehat B^J
  \le15(\Delta+r_{\y}D_{\cY}^2)
  =B^0,
\]
which proves the required bound on the returned state.
 
By \eqref{eq:upper-initialization-levels}, we have
$r_{\y}^J\le r_{\y,\epsilon}$.
If $J\ge1$, its minimality gives
$r_{\y,\epsilon}<r_{\y}^{J-1}=4r_{\y}^J$.
If $J=0$, then $r_{\y}^0=r_{\y,\epsilon}=\ell/8$.
Thus, in either case,
\[
  \frac{r_{\y,\epsilon}}4
  <r_{\y}^J
  \le r_{\y,\epsilon},
\]
which proves \eqref{eq:warm-curvature-bound}.
 
Finally, Lemma~\ref{prop:foam-interface} bounds the cost of the
\FOAM{} call at level $j$ by $\cO(\sqrt{\ell/r_{\y}^j})$,
since the contraction factor is fixed at $1/8$.
Because
$\sqrt{\ell/r_{\y}^j}=2^j\sqrt{\ell/r_{\y}^0}$,
each successive cost bound is twice the previous one.
Their sum is therefore bounded by a constant multiple of the
final bound. Including the initialization in
Lemma~\ref{lem:startup-state}, the total number of calls to the
first-order saddle oracle is 
\[
  \cO\left(
    1+\sum_{j=1}^J\sqrt{\frac{\ell}{r_{\y}^j}}
  \right)
  =\cO\left(\sqrt{\frac{\ell}{r_{\y}^J}}\right)
  =\cO\left(\sqrt{\frac{\ell}{r_{\y}}}\right).
\]
This proves Proposition~\ref{prop:warm-start}.
\end{proof}

\subsection{Proof of Theorem~\ref{thm:upper}}
\label{subsec:coupled-descent}

We now combine Propositions~\ref{prop:anchor-shift}
and~\ref{prop:warm-start} to prove Theorem~\ref{thm:upper}.
Let $r_{\y},S^0,B^0$ be the initialization used by
Algorithm~\ref{alg:tracked-foam-overview}, and set $r=(2\ell,r_{\y})$.
We keep $r_{\y}$ fixed throughout the outer iterations.
Recall $W^t=p_r(\z^t)+2B^t$ from \eqref{eq:W}.
Since $B^t\ge0$, we have $W^t\ge\inf p_r$.
We first prove descent of $W^t$, then establish stationarity
of the selected output and bound the oracle complexity.
\par 

\subsubsection{Coupled descent}

Proposition~\ref{prop:warm-start} and the definition of $B^0$ give
$\mathcal{L}_{\z^0,r_{\y}}(S^0)\le B^0$.
Combining \eqref{eq:interface-tracked-block} with the recursion
\eqref{eq:outer-budget-update} therefore yields inductively
\[
  \mathcal{L}_{\z^t,r_{\y}}(S^t)\le B^t,
  \qquad t=0,\ldots,T.
\]
Since $\z^{t+1}=\proj_{\cX}(-\omegavec_f^t/\ell)$, the
primal approximation bound in Lemma~\ref{prop:foam-interface} gives
\begin{equation}
  \ell\norm*{\z^{t+1}-\x_{\z^t,r_{\y}}^\star}^2\le B^t,
  \qquad t=0,\ldots,T-1.
\label{eq:inexact-proximal-step}
\end{equation}
Thus $\z^{t+1}$ approximates the exact proximal point
$\x_{\z^t,r_{\y}}^\star$ at the current center $\z^t$.
Recall that $p_r$ is the Moreau envelope of $\Phi_r$ with
parameter $1/(2\ell)$, and hence
\[
  \x_{\z^t,r_{\y}}^\star
  =\z^t-\frac{1}{2\ell}\nabla p_r(\z^t).
\]
The following lemma combines the resulting inexact descent of $p_r$
with the recursion for $B^t$.

\begin{lemma}[Coupled descent]
\label{lem:coupled-descent}
For every $t=0,\ldots,T-1$,
\begin{align}
  &p_r(\z^{t+1})-p_r(\z^t)
  \le-\ell\norm*{\z^t-\x_{\z^t,r_{\y}}^\star}^2+B^t,
\label{eq:envelope-descent}\qquad\mbox{and}\\
  &W^{t+1}-W^t
  \le-\frac{3}{4}\left(
    \ell\norm*{\z^t-\x_{\z^t,r_{\y}}^\star}^2+B^t
  \right).
\label{eq:W-descent}
\end{align}
\end{lemma}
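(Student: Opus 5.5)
The plan is to derive \eqref{eq:envelope-descent} from the $2\ell$-smoothness of $p_r$ together with the gradient identity~\eqref{eq:moreau1}. Then \eqref{eq:W-descent} follows by substituting the budget recursion~\eqref{eq:outer-budget-update}.

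\textbf{Step 1: inexact descent of $p_r$.} Write $\x^\star:=\x_{\z^t,r_{\y}}^\star$ and $\bm g:=\nabla p_r(\z^t)=2\ell(\z^t-\x^\star)$, and set $\bm e:=\z^{t+1}-\x^\star$. Then $\z^{t+1}-\z^t=-\bm g/(2\ell)+\bm e$. The descent lemma for a $2\ell$-smooth function gives
\[
  p_r(\z^{t+1})-p_r(\z^t)\le\ip*{\bm g}{\z^{t+1}-\z^t}+\ell\norm*{\z^{t+1}-\z^t}^2 .
\]
Expanding the right-hand side, the cross terms $\ip{\bm g}{\bm e}$ and $-\ip{\bm g}{\bm e}$ cancel. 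What remains is
\[
  -\frac{\norm{\bm g}^2}{2\ell}+\frac{\norm{\bm g}^2}{4\ell}+\ell\norm{\bm e}^2
  =-\frac{\norm{\bm g}^2}{4\ell}+\ell\norm{\bm e}^2 .
\]
Since $\norm{\bm g}^2/(4\ell)=\ell\norm{\z^t-\x^\star}^2$, and \eqref{eq:inexact-proximal-step} gives $\ell\norm{\bm e}^2\le B^t$, this is exactly \eqref{eq:envelope-descent}.

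\textbf{Step 2: bounding the displacement.} We need $\ell\norm{\bm d^t}^2$ in terms of the two quantities on the right of \eqref{eq:W-descent}. Since $\bm d^t=(\x^\star-\z^t)+\bm e$, the squared triangle inequality gives
\[
  \ell\norm{\bm d^t}^2\le2\ell\norm{\z^t-\x^\star}^2+2B^t .
\]

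\textbf{Step 3: assembling the $W$ descent.} Write $A:=\ell\norm{\z^t-\x^\star}^2$. By \eqref{eq:outer-budget-update},
\[
  2B^{t+1}-2B^t=\frac{1}{200}\bigl(2B^t+24\ell\norm{\bm d^t}^2\bigr)-2B^t .
\]
Applying Step 2, the right-hand side is at most $\tfrac{1}{200}(50B^t+48A)-2B^t$. Adding Step 1 gives
\[
  W^{t+1}-W^t\le-A+B^t+\frac{48}{200}A+\frac{50}{200}B^t-2B^t=-\frac{152}{200}A-\frac{3}{4}B^t .
\]
Since $152/200\ge3/4$, this yields \eqref{eq:W-descent}.

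\textbf{Main obstacle.} The only delicate point is the constant bookkeeping, namely checking that the factor $1/400$ leaves slack in both coefficients. With the Young/triangle constant $2$ in Step 2, the computation above shows that it does.
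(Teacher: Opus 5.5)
Your proof is correct. Steps~2 and~3 coincide with the paper's argument: you use the same displacement bound $\ell\norm{\bm d^t}^2\le2\ell\norm{\z^t-\x^\star}^2+2B^t$ and the same bookkeeping, and you obtain the coefficients $304/400$ and $3/4$.

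The difference is in Step~1. The paper does not use the descent lemma there. It uses the variational definition of $p_r$. Since $\x^\star$ is a feasible candidate in the minimization defining $p_r(\z^{t+1})$, one has $p_r(\z^{t+1})\le\Phi_r(\x^\star)+\ell\norm{\z^{t+1}-\x^\star}^2$. Also $p_r(\z^t)=\Phi_r(\x^\star)+\ell\norm{\z^t-\x^\star}^2$. Subtracting gives \eqref{eq:envelope-descent} at once.

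Both routes produce the same inequality, but the paper's is more elementary. It uses neither differentiability of $p_r$, nor the gradient formula \eqref{eq:moreau1}, nor any smoothness constant. Your exact cancellation of the cross terms depends on the Lipschitz constant of $\nabla p_r$ being exactly $2\ell$, matching the implicit stepsize $1/(2\ell)$. With any larger constant, a multiple of $\ip{\bm g}{\bm e}$ would survive and would have to be absorbed by Young's inequality, worsening the constants.

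In exchange, your version makes the inexact-gradient-descent interpretation explicit. It is essentially the heuristic derivation the paper gives for \eqref{eq:inexact-envelope-descent} in Section~\ref{subsec:main-upper}, whereas the formal proof uses the direct comparison argument.
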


\begin{proof}[Proof of Lemma~\ref{lem:coupled-descent}] 
By the definition of $p_r$,
\begin{align*}
  p_r(\z^{t+1})-p_r(\z^t)
  &\le
    \Phi_r(\x_{\z^t,r_{\y}}^\star)
    +\ell\norm*{\z^{t+1}-\x_{\z^t,r_{\y}}^\star}^2
    -p_r(\z^t)\\
  &=
    -\ell\norm*{\z^t-\x_{\z^t,r_{\y}}^\star}^2
    +\ell\norm*{\z^{t+1}-\x_{\z^t,r_{\y}}^\star}^2\\
  &\le
    -\ell\norm*{\z^t-\x_{\z^t,r_{\y}}^\star}^2+B^t,
\end{align*}
where the last inequality follows from
\eqref{eq:inexact-proximal-step}.
This proves \eqref{eq:envelope-descent}.

Moreover, since $\bm d^t=\z^{t+1}-\z^t$,
\begin{equation}
\begin{aligned}
  \ell\norm*{\bm d^t}^2
  &\le
    2\ell\norm*{\z^t-\x_{\z^t,r_{\y}}^\star}^2
    +2\ell\norm*{\z^{t+1}-\x_{\z^t,r_{\y}}^\star}^2\le
    2\ell\norm*{\z^t-\x_{\z^t,r_{\y}}^\star}^2+2B^t.
\end{aligned}
\label{eq:outer-step-bound}
\end{equation}
Substituting \eqref{eq:outer-step-bound} into
\eqref{eq:outer-budget-update} gives
\[
  B^{t+1}
  \le\frac{48}{400}
       \ell\norm*{\z^t-\x_{\z^t,r_{\y}}^\star}^2
       +\frac{50}{400}B^t.
\] 
Combining this inequality with \eqref{eq:envelope-descent}
and the definition of $W^t$ in \eqref{eq:W} gives
\begin{align*}
  W^{t+1}-W^t
  &\le
    -\left(1-\frac{96}{400}\right)
      \ell\norm*{\z^t-\x_{\z^t,r_{\y}}^\star}^2
    -\left(1-\frac{100}{400}\right)B^t\\
  &\le
    -\frac{3}{4}\left(
      \ell\norm*{\z^t-\x_{\z^t,r_{\y}}^\star}^2+B^t
    \right),
\end{align*}
which proves \eqref{eq:W-descent}.
\end{proof}

\subsubsection{Stationarity and oracle complexity}

The descent inequality \eqref{eq:W-descent} controls the residual
\[
  R^t:=\ell\norm*{\z^t-\x_{\z^t,r_{\y}}^\star}^2+B^t.
\]
This quantity contains the unknown exact proximal point and cannot be
evaluated by the algorithm. Instead,
Algorithm~\ref{alg:tracked-foam-overview} uses the computable residual
$Q^t=\ell\norm{\bm d^t}^2+B^t$.
The following lemma compares these residuals and bounds the gradient of
$p_r$ at the selected output.

\begin{lemma}[Residual comparison]
\label{lem:observable-residual}
For every $t=0,\ldots,T-1$,
\begin{equation}
  Q^t\le3R^t,
  \qquad\mbox{and}\qquad
  \norm*{\nabla p_r(\z^t)}^2\le8\ell Q^t.
\label{eq:gradient-vs-Q}
\end{equation}
Consequently, for the index $t^\star$ selected by
Algorithm~\ref{alg:tracked-foam-overview},
\begin{equation}
  \norm*{\nabla p_r(\z^{t^\star})}^2
  \le\frac{32\ell}{T}\bigl(W^0-\inf p_r\bigr).
\label{eq:selected-gradient-bound}
\end{equation}
\end{lemma}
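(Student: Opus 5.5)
We prove the two inequalities in \eqref{eq:gradient-vs-Q} directly from the primal approximation bound \eqref{eq:inexact-proximal-step} and the gradient identity \eqref{eq:moreau1}. Then we sum the coupled descent estimate \eqref{eq:W-descent} of Lemma~\ref{lem:coupled-descent}.

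\textbf{Residual comparison.} For $Q^t\le3R^t$ we reuse \eqref{eq:outer-step-bound}, which gives $\ell\norm{\bm d^t}^2\le2\ell\norm{\z^t-\x_{\z^t,r_{\y}}^\star}^2+2B^t$. Adding $B^t$ to both sides yields
\[
  Q^t\le2\ell\norm{\z^t-\x_{\z^t,r_{\y}}^\star}^2+3B^t\le3R^t.
\]
For the gradient bound we use $\nabla p_r(\z^t)=2\ell(\z^t-\x_{\z^t,r_{\y}}^\star)$. We split $\z^t-\x_{\z^t,r_{\y}}^\star=(\z^t-\z^{t+1})+(\z^{t+1}-\x_{\z^t,r_{\y}}^\star)$ and apply the squared triangle inequality together with \eqref{eq:inexact-proximal-step}:
\[
  \norm{\nabla p_r(\z^t)}^2
  \le8\ell^2\norm{\bm d^t}^2+8\ell^2\norm{\z^{t+1}-\x_{\z^t,r_{\y}}^\star}^2
  \le8\ell\bigl(\ell\norm{\bm d^t}^2+B^t\bigr)=8\ell Q^t.
\]

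\textbf{Selected iterate.} We telescope \eqref{eq:W-descent} over $t=0,\ldots,T-1$. Since $W^T\ge\inf p_r$ (because $B^T\ge0$), this gives
\[
  \tfrac34\sum_{t<T}R^t\le W^0-W^T\le W^0-\inf p_r.
\]
Hence $\min_{t<T}R^t\le\frac{4}{3T}(W^0-\inf p_r)$. Using $Q^t\le3R^t$, we get
\[
  Q^{t^\star}=\min_{t<T}Q^t\le3\min_{t<T}R^t\le\frac4T(W^0-\inf p_r),
\]
which is the claimed bound on $\min_t Q^t$. Multiplying by $8\ell$ and applying the gradient bound at $t^\star$ gives \eqref{eq:selected-gradient-bound}.

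The argument is routine. The only points needing care are the constants and the fact that $W^0-\inf p_r$ is finite. Finiteness holds because $p_r$ is bounded below: $p_r\ge\inf\Phi_r\ge\inf\Phi-\tfrac{r_{\y}}2D_{\cY}^2$ by \eqref{eq:value-bias}, and $\inf\Phi$ is finite by the initial-gap assumption.
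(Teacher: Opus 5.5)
Your proof is correct and essentially the same as the paper's: both inequalities in \eqref{eq:gradient-vs-Q} are obtained from \eqref{eq:outer-step-bound}, \eqref{eq:inexact-proximal-step}, and the gradient identity exactly as you do, and the selected-iterate bound comes from telescoping \eqref{eq:W-descent}. The only difference is a harmless reordering. The paper first converts the descent into $W^{t+1}-W^t\le-\frac14Q^t$ and then averages $Q^t$, whereas you bound $\min_t R^t$ first and then use $\min_t Q^t\le3\min_t R^t$; both routes give the same constant $4/T$.
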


\begin{proof}[Proof of Lemma~\ref{lem:observable-residual}]
 
By \eqref{eq:outer-step-bound}, we have
\[
  Q^t
  \le2\ell\norm*{\z^t-\x_{\z^t,r_{\y}}^\star}^2+3B^t
  \le3R^t.
\]
Moreover, using
$\nabla p_r(\z^t)=2\ell(\z^t-\x_{\z^t,r_{\y}}^\star)$,
$\bm d^t=\z^{t+1}-\z^t$, and
\eqref{eq:inexact-proximal-step}, we have
\begin{align*}
  \norm*{\nabla p_r(\z^t)}^2
  &\le
    8\ell^2\norm*{\bm d^t}^2
    +8\ell^2
      \norm*{\z^{t+1}-\x_{\z^t,r_{\y}}^\star}^2\\
  &\le
    8\ell\bigl(\ell\norm*{\bm d^t}^2+B^t\bigr)
   =8\ell Q^t.
\end{align*}
 
Combining \eqref{eq:W-descent} with $Q^t\le3R^t$ gives
\[
  W^{t+1}-W^t\le-\frac14Q^t.
\]
Summing over $t=0,\ldots,T-1$ and using
$W^T\ge\inf p_r$, we obtain
\[
  Q^{t^\star}
  \le\frac1T\sum_{t=0}^{T-1}Q^t
  \le\frac4T\bigl(W^0-\inf p_r\bigr),
\]
where the first inequality follows from the choice of $t^\star$.
Combining this bound with \eqref{eq:gradient-vs-Q} proves
\eqref{eq:selected-gradient-bound}.
\end{proof}
 
It remains to bound the initial gap $W^0-\inf p_r$,
verify the original OS criterion, and count the oracle calls.
\par 

\begin{proof}[Proof of Theorem~\ref{thm:upper}]
Since $\z^0=\mathbf{0}$, $p_r(\z^0)\le\Phi_r(\z^0)$, and
$\inf p_r=\inf\Phi_r$, the perturbation bound
\eqref{eq:value-bias} gives
\[
  p_r(\z^0)-\inf p_r
  \le\Phi_r(\z^0)-\inf\Phi_r
  \le\Delta+\frac{r_{\y}}{2}D_{\cY}^2.
\]
Together with $B^0=15(\Delta+r_{\y}D_{\cY}^2)$, this yields
\begin{equation}
  W^0-\inf p_r
  =p_r(\z^0)-\inf p_r+2B^0
  \le31\left(\Delta+r_{\y}D_{\cY}^2\right).
\label{eq:initial-W-bound}
\end{equation} 
Combining \eqref{eq:selected-gradient-bound}
with \eqref{eq:initial-W-bound} gives
\[
  \norm*{\nabla p_r(\z^{t^\star})}^2
  \le\frac{992\ell}{T}
       \left(\Delta+r_{\y}D_{\cY}^2\right).
\]
Since $r_{\y}\le r_{\y,\epsilon}$, we have
$r_{\y}D_{\cY}^2\le\epsilon^2/(8\ell)$.
By the choice of $T$ in Algorithm~\ref{alg:tracked-foam-overview},
\[
  \norm*{\nabla p_r(\z^{t^\star})}^2
  \le\frac{992}{T}
       \left(\ell\Delta+\frac{\epsilon^2}{8}\right)
  \le\frac{992}{4000}\epsilon^2
  <\frac{\epsilon^2}{4}.
\]
Thus $\norm{\nabla p_r(\z^{t^\star})}<\epsilon/2$.
The same bound $r_{\y}\le r_{\y,\epsilon}$ gives
$D_{\cY}\sqrt{2\ell r_{\y}}\le\epsilon/2$.
Lemma~\ref{lem:smoothing} therefore implies 
\[
  \norm*{\nabla\Phi_{1/(2\ell)}(\z^{t^\star})}
  \le\norm*{\nabla p_r(\z^{t^\star})}
       +D_{\cY}\sqrt{2\ell r_{\y}}
  \le\epsilon.
\]
Since $\z^{t^\star}\in\cX$, the selected output is an
$\epsilon$-OS point.

Finally, we count the calls to the first-order saddle oracle.
By Lemma~\ref{prop:foam-interface}, each \FOAM{} call in the
outer iterations requires $\cO(\sqrt{\ell/r_{\y}})$ oracle calls,
since its reduction factor is fixed at $1/400$.
Proposition~\ref{prop:warm-start} gives the same order of cost
for the initialization.
Moreover, $r_{\y}>r_{\y,\epsilon}/4$ implies
\[
  \sqrt{\frac{\ell}{r_{\y}}}
  =\cO\left(\sqrt{\frac{\ell}{r_{\y,\epsilon}}}\right)
  =\cO\left(
    \max\left\{1,\frac{\ell D_{\cY}}{\epsilon}\right\}
  \right).
\] 
Since $T=\cO(\ell\Delta/\epsilon^2+1)$, the initialization
and the $T$ outer iterations together require
\[
  \cO\left((T+1)\sqrt{\frac{\ell}{r_{\y}}}\right)
  =\cO\left[
    \left(\frac{\ell\Delta}{\epsilon^2}+1\right)
    \max\left\{1,\frac{\ell D_{\cY}}{\epsilon}\right\}
  \right]
\] 
calls to the first-order saddle oracle.
This proves Theorem~\ref{thm:upper}.
\end{proof}

\section{Conclusion}
\label{sec:conclusion}
We established the optimal deterministic first-order oracle complexity
of finding an $\epsilon$-OS point in smooth NC-C minimax optimization
with a compact convex dual domain.
For $\epsilon\lesssim\min\{\ell D_{\cY},\sqrt{\ell\Delta}\}$,
we proved a lower bound of
$\Omega(\ell^2D_{\cY}\Delta/\epsilon^3)$
for arbitrary deterministic first-order methods.
Our algorithm, \TrackedFOAM{}, attains the matching upper bound,
thereby removing the logarithmic gap in previous complexity guarantees.

A natural next step is to extend our lower bound to randomized
first-order algorithms and determine whether randomization can
improve the worst-case oracle complexity.
Our $C^\infty$ hard instance may also serve as a starting point
for establishing higher-order oracle lower bounds, extending
the study of such bounds for smooth nonconvex minimization
\citep{carmon2019lower} to the minimax setting.
This direction requires controlling the higher-order derivatives
and the information they reveal about the hard instance.

\section*{AI Disclosure}
The hard instance construction was developed through iterative collaboration first with \texttt{GPT-5.5 Pro}. Subsequently, \texttt{GPT-5.6 Sol Ultra} was used to simplify the
construction, design Algorithm~\ref{alg:tracked-foam-overview}, and improve the presentation of the proofs. The authors
independently verified all mathematical arguments and take full responsibility
for the content of the paper. An accompanying Lean formalization, developed with Codex and available at \url{https://github.com/SiyuPan04/ncc-lean}, provides a formal verification of the deterministic first-order oracle complexity established in this paper.

\section*{Acknowledgements}
Jiajin Li was supported by the Natural Sciences and Engineering
Research Council of Canada through Discovery Grant RGPIN-2025-05817.
\bibliography{ref}
\bibliographystyle{abbrvnat}

\newpage 
\appendix

\section{Useful Technical Lemmas}
\label{app:components}
\subsection{Proof of Lemma~\ref{lem:rotation-covariance}}
\label{app:rotation-proof}

\begin{proof}[Proof of Lemma~\ref{lem:rotation-covariance}]
Since $\bV^\top$ maps $\B_{D_{\cY}}^{n'}$ onto $\B_{D_{\cY}}^n$,
$\Phi_{\bU,\bV}(\x)=\Phi(\bU^\top\x)$.  The chain rule and the contractions
$\bU^\top,\bV^\top$ preserve joint smoothness, and dual concavity is
unchanged.  Surjectivity of $\bU^\top$ preserves the initial gap, so
$f_{\bU,\bV}\in\cF(\ell,D_{\cY},\Delta)$.

For each $\uvec\in\R^m$, the minimum of $\lVert \x-\z\rVert^2$
subject to $\bU^\top\z=\uvec$ is $\lVert \bU^\top\x-\uvec\rVert^2$,
attained at $\z=\bU\uvec+(\bI_{m'}-\bU\bU^\top)\x$.
Substituting this identity into the definition of the Moreau envelope gives
\[
  \bigl[\Phi_{\bU,\bV}\bigr]_{\frac{1}{2\ell}}(\x)
  =\Phi_{\frac{1}{2\ell}}(\bU^\top\x).
\]
Differentiating and using $\bU^\top\bU=\bI_m$ yields
\[
  \nabla\bigl[\Phi_{\bU,\bV}\bigr]_{\frac{1}{2\ell}}(\x)
  =\bU\nabla\Phi_{\frac{1}{2\ell}}(\bU^\top\x),
\]
which proves \eqref{eq:rotated-os}.
\end{proof}

\subsection{Proof of Lemma~\ref{lem:resisting-oracle}}
\label{app:resisting-proof}
\begin{proof}[Proof of Lemma~\ref{lem:resisting-oracle}]
Fix $T_0$ and $\sA\in\cA_{\rm det}$.  We first fix a deterministic
rule for choosing orthogonal extensions.  Given a subspace of a Euclidean space, project
the standard basis vectors onto its orthogonal complement and apply
Gram-Schmidt in their fixed order, discarding zero residuals.  If several
vectors are required, assign the resulting orthonormal vectors in increasing
coordinate order.  We use this rule independently on the primal and dual
sides.

We construct $\sZ$ recursively by simulating $\sA$.  To
describe its execution, fix $m,n\in\N$, $D_{\cY}>0$, and a smooth
saddle function $f:\R^m\times\B_{D_{\cY}}^n\to\R$.  Let
$(\bar\x^{(t)},\bar\y^{(t)})$ be the base query generated by the simulator,
let $(\x^{(t)},\y^{(t)})$ be the virtual ambient query of
$\sA$, and let $\widehat{\mathbb O}^{(t)}$ be the simulated reply
supplied to $\sA$.
Define $\mathcal I_{\x}^{(0)}=\mathcal I_{\y}^{(0)}=\varnothing,$ and, for $t\ge1$, 
\begin{align*}
  \mathcal I_{\x}^{(t)}
  :=\bigcup_{r<t}
     \supp\nabla_{\x}f(\bar\x^{(r)};\bar\y^{(r)}),\qquad\mbox{and}\qquad\mathcal I_{\y}^{(t)}
  :=\bigcup_{r<t}
     \supp\nabla_{\y}f(\bar\x^{(r)};\bar\y^{(r)}).
\end{align*}
Along with the transcript, we construct orthonormal families
\[
  \{\uvec_k\mid k\in\mathcal I_{\x}^{(t)}\}\subseteq\R^{m+T_0},
  \qquad\mbox{and}\qquad
  \{\vvec_j\mid j\in\mathcal I_{\y}^{(t)}\}\subseteq\R^{n+T_0}.
\]
At the beginning of round $t$, we maintain the following four properties:
\begin{enumerate}[label=(\roman*)]
  \item for every $r<t$,
  \[
    \supp(\bar\x^{(r)})\subseteq\mathcal I_{\x}^{(r)},
    \qquad\mbox{and}\qquad
    \supp(\bar\y^{(r)})\subseteq\mathcal I_{\y}^{(r)};
  \]
  \item for every $r<t$,
  \[
    \langle \uvec_k,\x^{(r)}\rangle=\bar x_k^{(r)}
      \quad(k\in\mathcal I_{\x}^{(t)}),
    \qquad\mbox{and}\qquad
    \langle \vvec_j,\y^{(r)}\rangle=\bar y_j^{(r)}
      \quad(j\in\mathcal I_{\y}^{(t)});
  \]
  \item for every $r<t$,
  \begin{equation}
  \widehat{\mathbb O}^{(r)}=
  \left(
    f(\bar\x^{(r)};\bar\y^{(r)}),
    \sum_{k\in\mathcal I_{\x}^{(r+1)}}
      \bigl[\nabla_{\x}f(\bar\x^{(r)};\bar\y^{(r)})\bigr]_k\uvec_k,
    \sum_{j\in\mathcal I_{\y}^{(r+1)}}
      \bigl[\nabla_{\y}f(\bar\x^{(r)};\bar\y^{(r)})\bigr]_j\vvec_j
  \right);
  \label{eq:simulated-oracle-reply}
  \end{equation}
  \item for every $r<t$, $(\x^{(r)},\y^{(r)})$ is the $r$-th query generated by
  $\sA$ from
  $\widehat{\mathbb O}^{(0)},\ldots,
    \widehat{\mathbb O}^{(r-1)}$.
\end{enumerate}
These properties hold vacuously at $t=0$.  Suppose they hold at the
beginning of a round $t<T_0$.  To verify feasibility, temporarily complete
the current primal and dual families by choosing the remaining columns
orthogonal to the assigned columns and all previous virtual queries.
On the primal side, the available space has dimension at least
\[
  (m+T_0)-t-|\mathcal I_{\x}^{(t)}|
  \ge m-|\mathcal I_{\x}^{(t)}|,
\]
which is the number of unassigned columns.  The same count applies on the
dual side.  Properties~(i)-(iii) show that the resulting rotated function
produces all previous simulated replies.  By property~(iv) and determinism,
the next virtual query $(\x^{(t)},\y^{(t)})$ is therefore a feasible query
of $\sA$ on this function.  In particular, $\lVert \y^{(t)}\rVert\le D_{\cY}/2$.
These temporary completions are used only to justify feasibility; the
simulator retains only the previously assigned columns.

Define the next base query by
\[
  \bar x_k^{(t)}:=
  \begin{cases}
    \langle \uvec_k,\x^{(t)}\rangle,&k\in\mathcal I_{\x}^{(t)},\\
    0,&k\notin\mathcal I_{\x}^{(t)},
  \end{cases}
  \qquad\mbox{and}\qquad
  \bar y_j^{(t)}:=
  \begin{cases}
    \langle \vvec_j,\y^{(t)}\rangle,&j\in\mathcal I_{\y}^{(t)},\\
    0,&j\notin\mathcal I_{\y}^{(t)}.
  \end{cases}
\]
The primal query is feasible because its domain is $\R^m$.  Bessel's
inequality gives
$\lVert \bar\y^{(t)}\rVert\le\lVert \y^{(t)}\rVert\le D_{\cY}/2$,
so the dual query is feasible as well.  The two support inclusions show that
property~(i) holds for the new query.  In particular, the initial base query
is the origin, and every base query is zero-respecting.

After querying $f$, let
\begin{align*}
  \mathcal N_{\x}^{(t)}
  &:=\supp\nabla_{\x}f(\bar\x^{(t)};\bar\y^{(t)})
       \setminus\mathcal I_{\x}^{(t)},\quad\mbox{and}\quad
  \mathcal N_{\y}^{(t)}
  :=\supp\nabla_{\y}f(\bar\x^{(t)};\bar\y^{(t)})
       \setminus\mathcal I_{\y}^{(t)}.
\end{align*}
Thus $\mathcal I_{\x}^{(t+1)}=\mathcal I_{\x}^{(t)}\cup
\mathcal N_{\x}^{(t)}$, and analogously on the dual side.  The primal
orthogonal complement available for the newly revealed columns has dimension
at least
\[
  (m+T_0)-(t+1)-|\mathcal I_{\x}^{(t)}|
  \ge m-|\mathcal I_{\x}^{(t)}|
  \ge|\mathcal N_{\x}^{(t)}|.
\]
Hence the fixed rule chooses
$\{\uvec_k\mid k\in\mathcal N_{\x}^{(t)}\}$ orthonormally in
\[
  \Span\bigl(
    \{\x^{(0)},\ldots,\x^{(t)}\}
    \cup\{\uvec_k\mid k\in\mathcal I_{\x}^{(t)}\}
  \bigr)^\perp.
\]
The identical dimension count with $n$, $\y^{(r)}$, and
$\mathcal I_{\y}^{(t)}$ supplies
$\{\vvec_j\mid j\in\mathcal N_{\y}^{(t)}\}$.

For an old primal index, property~(ii) for earlier rounds and the definition
of $\bar\x^{(t)}$ give
$\langle \uvec_k,\x^{(r)}\rangle=\bar x_k^{(r)}$ for every $r\le t$.
For a newly revealed primal index, the chosen column is orthogonal to
$\x^{(0)},\ldots,\x^{(t)}$, while the corresponding base coordinate is
zero at every round through $t$.  The same argument applies to the dual
columns.  Thus property~(ii) holds at the beginning of round $t+1$.

Define $\widehat{\mathbb O}^{(t)}$ by
\eqref{eq:simulated-oracle-reply} with $r=t$ and supply it to $\sA$.
This verifies property~(iii), while the construction of the virtual query
verifies property~(iv).  Hence all four properties hold at the beginning
of round $t+1$, completing the induction.

After round $T_0-1$, use the same rule to choose the remaining primal
columns orthogonal to the assigned columns and all $T_0$ virtual queries.
The available space has dimension at least
\[
  (m+T_0)-T_0-|\mathcal I_{\x}^{(T_0)}|
  =m-|\mathcal I_{\x}^{(T_0)}|,
\]
which is the number of unassigned primal columns.  The same count completes
the dual family.  Set
\[
  \bU:=[\uvec_1,\ldots,\uvec_m]\in\operatorname O(m+T_0,m),
  \quad\mbox{and}\quad
  \bV:=[\vvec_1,\ldots,\vvec_n]\in\operatorname O(n+T_0,n).
\]
Property~(ii) at $T_0$ handles the assigned columns.  Every unassigned
column is orthogonal to the first $T_0$ virtual queries, and the
corresponding base coordinates vanish by property~(i).  Consequently,
\begin{equation}
  \bU^\top\x^{(t)}=\bar\x^{(t)},
  \quad\mbox{and}\quad
  \bV^\top\y^{(t)}=\bar\y^{(t)},
  \qquad 0\le t<T_0.
\label{eq:completed-query-projection}
\end{equation}

The two base gradients are supported on
$\mathcal I_{\x}^{(t+1)}$ and $\mathcal I_{\y}^{(t+1)}$, respectively,
so \eqref{eq:completed-query-projection}, the chain rule, and
\eqref{eq:simulated-oracle-reply} give
\[
  \mathbb O_{f_{\bU,\bV}}(\x^{(t)};\y^{(t)})
  =\widehat{\mathbb O}^{(t)},
  \qquad 0\le t<T_0.
\]
Starting from the common initial query at the origin, determinism of
$\sA$ identifies the virtual queries with its first $T_0$ queries on
$f_{\bU,\bV}$.  Combining this with
\eqref{eq:completed-query-projection} gives
\[
  \bigl(\sZ_{\x}^{(t)}[f],\sZ_{\y}^{(t)}[f]\bigr)
  =\bigl(
      \bU^\top\sA_{\x}^{(t)}[f_{\bU,\bV}],
      \bV^\top\sA_{\y}^{(t)}[f_{\bU,\bV}]
    \bigr),
  \qquad 0\le t<T_0.
\]
\rev{The fixed extension rule makes $\sZ$ deterministic.}  The construction
uses only $\sA$, the known domains and dimensions, and the oracle replies
already received, so the same simulation rule applies to every $f$.
After the simulated horizon, let $\sZ$ query the origin at every round.
On other domains containing the primal-dual origin, define $\sZ$ to query
the origin throughout.  These choices preserve feasibility and
the zero-respecting property.  Hence
$\sZ\in\rev{\cA_{\rm zr}\cap\cA_{\rm det}}$, and the proof is complete.
\end{proof}

\subsection{Properties of the inner chain}
\label{app:inner-interface-proof}

The following lemma adapts \citet[Lemmas~4 and~7]{li2021lower}
to our setting and records the properties of the inner chain used in
Section~\ref{sec:verification}.

\begin{lemma}[Properties of the inner chain]
\label{lem:inner-interface}
There are numerical constants $c_H,c_{\y}>0$, independent of $M,N,D$,
such that the following properties hold.
\begin{enumerate}[label=(\roman*)]
  \item The bound $6/5\le c_N\le8/5$ and the identity
  \eqref{eq:effective-link} hold.  Moreover, the maximizer satisfies
  $\norm{\w^\star(a,b)}\le c_{\y}N\sqrt{a^2+b^2}$.
  \item The function $H(a,b;\cdot)$ is $N^{-2}$-strongly concave and its joint
  Hessian has norm at most $c_H$;
  \item $H$ is a zero-chain in the order
  $(a,w_1,\ldots,w_N,b)$, while
  $H(a,b;\bz)\ge0$ and
  $H(0,0;\w)\le0$;
  \item The function
  \[
    V_D(\bm{\nu}):=\max_{\y\in\cY_D}
      \sum_{i=1}^{M}H(a_i,b_i;\y^{(i)})
  \]
  is differentiable and satisfies
  \[
    \ip*{\bm{\nu}}{\nabla V_D(\bm{\nu})}
    \ge\frac{2}{5}\norm*{\bm{\nu}}^2.
  \]
\end{enumerate}
\end{lemma}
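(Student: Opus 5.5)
The plan is to verify the four items directly from the spectral structure of $\bM_N$, relying on \citet[Lemmas~4 and~7]{li2021lower} for the asymptotics of $\bB_N$. Item~(i), specifically the bounds on $c_N$ and the $O(N)$ maximizer bound, is the main obstacle. Items (ii)–(iv) are essentially algebraic once $c_N\le 8/5$ is available.

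\textbf{Step 1: identity \eqref{eq:effective-link} and $c_N\in[6/5,8/5]$.} First I would note that $\bA_N$ commutes with the reversal permutation $\mathbf J$ (with $\mathbf J\ee_1=\ee_N$). Hence so do $\bM_N$ and $\bB_N$, which gives $(\bB_N)_{NN}=(\bB_N)_{11}$. Since $\bB_N$ is also symmetric, $(\bB_N)_{N1}=k_N$. Substituting $\w^\star$ gives
\[
\max_{\w}H=\tfrac{1}{2k_N}\bigl[(\bB_N)_{11}(a^2+b^2)-2k_Nab\bigr]+\tfrac{2-c_N}{2}(a^2+b^2)=a^2-ab+b^2 .
\]
For the bounds on $c_N$ and the maximizer, I would diagonalize $\bA_N$ in the DCT-II basis:
\[
\lambda_k=4\sin^2\bigl(\tfrac{\pi k}{2N}\bigr),\qquad v_k(1)^2=v_k(N)^2=\tfrac{2}{N}\cos^2\bigl(\tfrac{\pi k}{2N}\bigr)\ (k\ge1),\qquad v_0=N^{-1/2}\mathbf 1,\qquad v_k(N)=(-1)^kv_k(1).
\]
Then
\[
(\bB_N)_{11}=N+\sum_{k\ge1}\frac{v_k(1)^2}{\lambda_k+N^{-2}},\qquad k_N=N+\sum_{k\ge1}(-1)^k\frac{v_k(1)^2}{\lambda_k+N^{-2}}.
\]
Each summand is roughly $\tfrac{2N}{\pi^2k^2+1}$, so both quantities are $N$ times a convergent series. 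A careful comparison, or the explicit $\cosh$ solution of the tridiagonal system as in \citet[Lemma~7]{li2021lower}, gives $k_N\ge cN$ and the ratio bounds $6/5\le c_N\le 8/5$. I expect this numerical pinning of $c_N$ to be the most delicate part. If the constants quoted from \citet{li2021lower} do not directly give $[6/5,8/5]$, I would compute the limiting series, $\coth$/$\operatorname{csch}$-type sums, explicitly and control the finite-$N$ error using $N\ge10$.

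\textbf{Step 2: the maximizer bound.} I would use
\[
\norm{\bB_N\ee_1}^2=(\bB_N^2)_{11}=N^3+\sum_{k\ge1}\frac{v_k(1)^2}{(\lambda_k+N^{-2})^2}=O(N^3),
\]
and the same bound holds for $\ee_N$. Combined with $k_N^{-1/2}=O(N^{-1/2})$ and the triangle inequality, this gives $\norm{\w^\star}\le c_{\y}N(|a|+|b|)\le \sqrt2\,c_{\y}N\sqrt{a^2+b^2}$.

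\textbf{Step 3: items (ii) and (iii).} Strong concavity follows from $\bM_N\succeq N^{-2}\bI$. The joint Hessian has blocks $-\bM_N$ (norm at most $N^{-2}+4$), the coupling $k_N^{-1/2}[\ee_1,-\ee_N]$ (norm $O(N^{-1/2})$), and $(2-c_N)\bI_2$, so its norm is bounded by a constant $c_H$. For the zero-chain property in the order $(a,w_1,\dots,w_N,b)$, I would use two facts. First, $\nabla_{\w}H=-\bM_N\w+k_N^{-1/2}(a\ee_1-b\ee_N)$ with $\bM_N$ tridiagonal. Second, $\nabla_aH=k_N^{-1/2}w_1+(2-c_N)a$ and $\nabla_bH=-k_N^{-1/2}w_N+(2-c_N)b$. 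Together these show that support in the first $i-1$ coordinates yields gradient support in the first $i$. Finally, $H(a,b;\bz)=\tfrac{2-c_N}{2}(a^2+b^2)\ge0$ because $c_N\le8/5$, and $H(0,0;\w)=-\tfrac12\w^\top\bM_N\w\le0$.

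\textbf{Step 4: item (iv).} The inner objective is strongly concave over the compact ball $\cY_D$, so the maximizer $\widehat\y$ is unique and Danskin's theorem gives differentiability of $V_D$. Write $\sum_iH(a_i,b_i;\y^{(i)})=-\tfrac12\y^\top\bM\y+\ip{\mathbf G\bm\nu}{\y}+\tfrac{2-c_N}{2}\norm{\bm\nu}^2$, with $\mathbf G$ linear and $\bM$ block-diagonal. Then
\[
\ip{\bm\nu}{\nabla V_D(\bm\nu)}=(2-c_N)\norm{\bm\nu}^2+\ip{\mathbf G\bm\nu}{\widehat\y}.
\]
Since $\y=\bz$ is feasible, optimality of $\widehat\y$ gives $-\tfrac12\widehat\y^\top\bM\widehat\y+\ip{\mathbf G\bm\nu}{\widehat\y}\ge0$. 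Hence $\ip{\mathbf G\bm\nu}{\widehat\y}\ge0$, and $c_N\le8/5$ yields the bound $\tfrac25\norm{\bm\nu}^2$, even when the ball constraint is active.
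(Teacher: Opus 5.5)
Your handling of the identity \eqref{eq:effective-link} via the reversal symmetry of $\bB_N$, of items (ii) and (iii), and of the maximizer bound matches the paper. Your argument for (iv) is a valid and slightly more direct replacement for the paper's: you compare the constrained maximizer $\widehat{\y}$ with the feasible point $\y=\bz$ to get $\langle\mathbf G\bm{\nu},\widehat{\y}\rangle\ge\frac12\widehat{\y}^\top\bM\widehat{\y}\ge0$, whereas the paper notes that $V_D-\frac{2-c_N}{2}\norm{\bm{\nu}}^2$ is convex, nonnegative, and zero at $\bz$.

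The gap is the step you flag as the main obstacle. The bound $6/5\le c_N\le 8/5$ is asserted, not proved, yet the constant $2/5$ in (iv) (and, more weakly, $H(a,b;\bz)\ge0$) rests on the upper bound for $c_N$. Neither of your fallbacks closes it as written.
\begin{itemize}
\item The column bounds $N/10\le(\bB_N)_{j1}\le 20N$ of \citet[Lemma~7]{li2021lower} only give $c_N\in[1/200,200]$.
\item Your spectral sums do give $(\bB_N)_{11}/N\to\coth 1$ and $k_N/N\to\operatorname{csch}1$, so $c_N\to\cosh 1\approx1.543$. But the margin to $8/5$ is under $4\%$: asymptotically you need $k_N\ge(\bB_N)_{11}/1.6\approx0.82N$, against a true value near $0.85N$.
\item The natural crude estimates lose far more than that. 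Using $\sin x\ge 2x/\pi$ in the denominators and the alternating-series bound $\sum_{k\ge1}(-1)^kt_k\ge-t_1$ on the summands $t_k:=v_k(1)^2/(\lambda_k+N^{-2})$ yields only $k_N\ge 3N/5$.
\end{itemize}
A careful finite-$N$ error analysis valid for all $N\ge10$ would be required, and it is not supplied.

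The missing idea is that the first column of $\bB_N$ has a closed form. Set $\xi_j:=(\bB_N)_{j1}$.
\begin{itemize}
\item The interior and last rows of $\bM_N(\xi_1,\ldots,\xi_N)^\top=\ee_1$ read $\xi_{j-1}-(2+N^{-2})\xi_j+\xi_{j+1}=0$ and $\xi_{N-1}=(1+N^{-2})\xi_N$.
\item These are solved exactly by $\xi_j=\xi_N\cosh\bigl((N-j+\tfrac12)\lambda_N\bigr)/\cosh(\tfrac12\lambda_N)$, where $\cosh\lambda_N=1+\tfrac1{2N^2}$.
\item Hence $c_N=\xi_1/\xi_N=\cosh\bigl((N-\tfrac12)\lambda_N\bigr)/\cosh(\tfrac12\lambda_N)$.
\item The elementary bounds $1/(\sqrt2N)\le\lambda_N\le1/N$ then give $c_N\ge\cosh\bigl((N-1)\lambda_N\bigr)\ge1+\tfrac{(N-1)^2}{4N^2}>\tfrac65$ for $N\ge10$, and $c_N\le\cosh(N\lambda_N)\le\cosh1<\tfrac85$.
\end{itemize}
With this inserted, the rest of your plan goes through.
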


\begin{proof}[Proof of Lemma~\ref{lem:inner-interface}]
We first prove \textup{(i)}.  \citet[Lemma~7]{li2021lower} establish
$N/10\le(\bB_N)_{j1}\le20N$ for every $j\in[N]$.
Reversing the coordinate order leaves $\bM_N$ and its inverse unchanged,
so the same bounds hold for the last column.  Hence
\begin{equation}
  \frac{N}{10}\le k_N\le20N,
  \qquad
  k_N^{-\frac{1}{2}}\le\sqrt{\frac{10}{N}},
  \qquad\mbox{and}\qquad
  \norm*{\bB_N\ee_1}=\norm*{\bB_N\ee_N}\le20N^{\frac{3}{2}}.
\label{eq:inherited-column-bounds}
\end{equation}

To bound $c_N$, write $\xi_j:=(\bB_N)_{j1}$.
The interior and last rows of
$\bM_N(\xi_1,\ldots,\xi_N)^\top=\ee_1$ give
\begin{equation}
\label{eq:xi}
  \xi_{j-1}-(2+N^{-2})\xi_j+\xi_{j+1}=0
  \quad(2\le j\le N-1),
  \quad\mbox{and}\quad
  \xi_{N-1}=(1+N^{-2})\xi_N.
\end{equation}
Choose $\lambda_N>0$ so that
$\cosh\lambda_N=1+1/(2N^2)$.
The formula
\[
  \xi_j=\xi_N
  \frac{\cosh((N-j+\frac{1}{2})\lambda_N)}
       {\cosh(\frac{\lambda_N}{2})},
  \qquad j\in[N],
\]
satisfies both relations in \eqref{eq:xi}.
These relations determine all entries from $\xi_N$, so the formula holds.

Since $\cosh t\ge1+t^2/2$ for $t\ge0$, the defining equation
gives $\lambda_N\le1/N\le1$.
We can therefore apply $\cosh t\le1+t^2$ on $[0,1]$
to obtain $\lambda_N\ge1/(\sqrt{2}N)$.
Since $c_N=\xi_1/\xi_N$ and
$\cosh(u+v)\ge\cosh u\cosh v$ for $u,v\ge0$, we obtain
\[
\begin{aligned}
  c_N
  &\ge\cosh((N-1)\lambda_N)
   \ge1+\frac{(N-1)^2}{4N^2}
   \ge1+\frac{81}{400}>\frac65,\qquad\mbox{and}\\
  c_N
  &\le\cosh(N\lambda_N)
   \le\cosh(1)<\frac85,
\end{aligned}
\]
where we used $N\ge10$.  Thus $2-c_N\in[2/5,4/5]$.

The dual optimality condition gives
$\w^\star(a,b)=k_N^{-1/2}\bB_N(a\ee_1-b\ee_N)$.
The triangle inequality and \eqref{eq:inherited-column-bounds} yield
\[
  \norm*{\w^\star(a,b)}
  \le20\sqrt{10}\,N(|a|+|b|)
  \le20\sqrt{20}\,N\sqrt{a^2+b^2}.
\]
Thus we may take $c_{\y}:=20\sqrt{20}$.
Symmetry also gives
$(\bB_N)_{11}=(\bB_N)_{NN}=c_Nk_N$ and
$(\bB_N)_{1N}=(\bB_N)_{N1}=k_N$.
Substituting the maximizer therefore gives
\[
\begin{aligned}
  \max_{\w}H(a,b;\w)
  &=\frac{1}{2k_N}
    (a\ee_1-b\ee_N)^\top\bB_N(a\ee_1-b\ee_N)
    +\frac{2-c_N}{2}(a^2+b^2)\\
  &=\frac{c_N}{2}(a^2+b^2)-ab
    +\frac{2-c_N}{2}(a^2+b^2)\\
  &=a^2-ab+b^2,
\end{aligned}
\]
which proves \eqref{eq:effective-link} and \textup{(i)}.

For \textup{(ii)}, the eigenvalue bounds
$N^{-2}\bI_N\preceq\bM_N\preceq(4+N^{-2})\bI_N$
show that $H(a,b;\cdot)$ is $N^{-2}$-strongly concave.
Let ${\bf E}_N:=(\ee_1,-\ee_N)$, so $\norm{{\bf E}_N}=1$.
In the variables $(a,b,\w)$, the Hessian is
\[
  \nabla^2H
  =\begin{pmatrix}
      (2-c_N)\bI_2 & k_N^{-\frac{1}{2}}{\bf E}_N^\top\\
      k_N^{-\frac{1}{2}}{\bf E}_N & -\bM_N
    \end{pmatrix}.
\]
Splitting this matrix into its block diagonal and off-diagonal parts gives
\[
  \norm*{\nabla^2H}
  \le\max\{2-c_N,\norm*{\bM_N}\}
      +k_N^{-\frac{1}{2}}\norm*{{\bf E}_N}
  <5+1=6,
\]
where \eqref{eq:inherited-column-bounds} and $N\ge10$ give
$k_N^{-1/2}\le1$.  Taking $c_H:=6$ proves \textup{(ii)}.

For \textup{(iii)}, $\bM_N$ is tridiagonal, and the endpoint
couplings involve only $aw_1$ and $bw_N$.
Thus each component of the saddle vector field depends linearly only
on its own coordinate and its immediate neighbors in the order
$(a,w_1,\ldots,w_N,b)$.
If the input is supported on a prefix of this order, every component
of the vector field beyond the next coordinate is therefore zero.
This proves the zero-chain property.
The remaining claims follow from
$H(a,b;\bz)=(2-c_N)(a^2+b^2)/2\ge0$ and
$H(0,0;\w)=-\w^\top\bM_N\w/2\le0$.

For \textup{(iv)}, define
$V_D^\circ(\bm{\nu}):=V_D(\bm{\nu})-(2-c_N)\norm{\bm{\nu}}^2/2$.
By the definition of $H$, this is the maximum of functions affine
in $\bm{\nu}$, so it is convex.
The set $\cY_D$ is compact and convex, and the objective is strongly
concave in $\y$, so the maximizer exists and is unique.
Danskin's theorem therefore gives differentiability of $V_D^\circ$
and $V_D$.

Feasibility of $\y=\bz$ gives $V_D^\circ(\bm{\nu})\ge0$.
At $\bm{\nu}=\bz$, the maximized expression is a nonpositive quadratic
that vanishes at $\y=\bz$, so $V_D^\circ(\bz)=0$.
Convexity now yields
\[
  \ip*{\bm{\nu}}{\nabla V_D^\circ(\bm{\nu})}
  \ge V_D^\circ(\bm{\nu})-V_D^\circ(\bz)
  \ge0.
\]
Adding back the quadratic term gives
\[
  \ip*{\bm{\nu}}{\nabla V_D(\bm{\nu})}
  =\ip*{\bm{\nu}}{\nabla V_D^\circ(\bm{\nu})}
   +(2-c_N)\norm*{\bm{\nu}}^2
  \ge\frac25\norm*{\bm{\nu}}^2,
\]
which proves \textup{(iv)}. We finish the proof.
\end{proof}
\subsection{Properties of \texorpdfstring{$p$}{p}}
\label{app:gate-transformations-proof}
\begin{lemma}
\label{lem:gate-transformations}
$p\in C^\infty(\R)$. Moreover, for every $t\in\R$,
\[0\le p'(t)\le1,\qquad\mbox{and}\qquad0\le p''(t)\le2.\]
\end{lemma}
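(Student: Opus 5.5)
The plan is to analyze the integrand $\sigma(v):=\frac{e^{-1/v}}{e^{-1/v}+e^{-1/(1-v)}}$ on $(0,1)$ and to note that, by the definition in \eqref{eq:base-ramp}, $p'$ is the function $\sigma$ extended by $0$ on $(-\infty,0]$ and by $1$ on $[1,\infty)$. Set $\phi(v):=e^{-1/v}$ for $v>0$ and $\phi(v):=0$ for $v\le0$. This is the classical $C^\infty$ function all of whose derivatives vanish at $0$. Write $S(v):=\phi(v)+\phi(1-v)$, so that $\sigma=\phi/S$. Since $S>0$ on all of $\R$ (at least one of $v$, $1-v$ is positive), $\sigma$ extends to a $C^\infty$ function on $\R$. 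This extension equals $0$ for $v\le0$ and $1$ for $v\ge1$, so it coincides with the derivative of the piecewise formula for $p$.

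Next I will check that $p$ is well glued. It is continuous at $0$ because $p(0^+)=0$. At $1$, I need $\int_0^1\sigma=1/2$. This follows from the symmetry $\sigma(1-v)=1-\sigma(v)$, which gives $\int_0^1\sigma=\int_0^1(1-\sigma)$. With this, $p(t)=\int_0^t\sigma$ for all $t\ge0$ matches the formula $t-\tfrac12$ on $[1,\infty)$. Consequently $p$ is an antiderivative of the smooth function $\sigma$, so $p\in C^\infty(\R)$ and $p'=\sigma$. The bound $0\le p'\le1$ is immediate from $0\le\phi\le S$.

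For $p''=\sigma'$, it suffices to work on $(0,1)$, since $\sigma'=0$ elsewhere. With $g(v):=1/(1-v)-1/v$, we have $\sigma=1/(1+e^{-g})$, a logistic function of $g$. Since $g'(v)=1/(1-v)^2+1/v^2>0$, we get $\sigma'=\sigma(1-\sigma)g'\ge0$.

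The main obstacle is the upper bound $\sigma'\le2$. Using the symmetry, it suffices to treat $v\in(0,1/2]$. There $1-\sigma\le1$ and $g'\le 1/v^2+4$, while $\sigma\le e^{-g}=e^{1/v-1/(1-v)}\cdot e^{-2/v}\cdot e^{...}$. More cleanly, $\sigma(v)\le \phi(v)/\phi(1-v)=e^{-1/v+1/(1-v)}\le e^{-1/v+2}$ on $(0,1/2]$. This gives $\sigma'\le e^{2-1/v}(1/v^2+4)$, which is small for small $v$. For $v$ near $1/2$ this crude bound is too weak, so I will use $\sigma(1-\sigma)\le1/4$ together with $g'\le 1/v^2+1/(1-v)^2$. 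This gives $\sigma'\le2$ whenever $g'\le8$, i.e. on an interval around $1/2$ such as $[0.42,0.58]$, where $g'\le 5.67+5.67<8$ fails slightly. So I will instead split at a point like $v_0=0.4$ and bound more carefully on $[v_0,1/2]$, using $\sigma(1-\sigma)=\tfrac14\operatorname{sech}^2(g/2)$ and the monotone growth of $|g|$ away from $1/2$. The estimate I aim for is $\tfrac14\operatorname{sech}^2(g/2)\,g'\le2$ on $[v_0,1/2]$, and $e^{2-1/v}(1/v^2+4)\le2$ on $(0,v_0]$. Both are one-variable numerical checks that I expect to go through with constants to spare: at $v=1/2$, $\sigma'=\tfrac14\cdot8=2$ exactly, and $\sigma'$ is maximized there. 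If the crude splitting is tight, I would instead prove that $\sigma'$ attains its maximum at $v=1/2$ by a log-derivative monotonicity argument.
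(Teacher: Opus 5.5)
Your arguments for smoothness (writing $\sigma=\phi/S$ with $S>0$), for the gluing via $\sigma(1-v)=1-\sigma(v)$, for $0\le p'\le1$, and for $p''\ge0$ through the logistic form $\sigma=1/(1+e^{-g})$ are correct and are essentially the paper's argument. The gap is in $p''\le2$, the only nontrivial claim; your plan for it does not work as stated.

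First, the crude estimate $e^{2-1/v}(v^{-2}+4)\le2$ fails at your split point $v_0=0.4$, where the left side equals $e^{-1/2}\cdot10.25\approx6.2$. It holds only for $v\lesssim0.22$.

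Second, and more fundamentally, the remaining inequality $\tfrac14\operatorname{sech}^2(g/2)\,g'\le2$ on $[v_0,1/2]$ is the lemma restricted to that interval. It holds with equality at $v=1/2$, as you note yourself, so there are no constants to spare and no finite splitting into subintervals can settle it. On any $[a,1/2]$, bounding the two factors by their extreme values gives only $\tfrac14g'(a)>2$, because $g'(v)=v^{-2}+(1-v)^{-2}\ge 2/(v(1-v))\ge8$ with equality only at $v=1/2$. For the same reason, combining $\sigma(1-\sigma)\le\tfrac14$ with $g'\le8$ covers only the single point $v=1/2$. Your fallback, showing that $\sigma'$ is maximized at $v=1/2$, is the right statement, but it is left unexecuted and is the entire content of the bound.

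The missing idea is the elementary inequality $\cosh^2x\ge1+x^2$, which captures exactly the second-order decay needed at $v=1/2$. From your own representation,
\[
  \sigma'=\frac{g'}{4\cosh^2(g/2)}\le\frac{g'}{4+g^2}.
\]
With $w:=2v-1$ one has $g=4w/(1-w^2)$, $g'=8(1+w^2)/(1-w^2)^2$, and $4+g^2=4(1+w^2)^2/(1-w^2)^2$. Hence $\sigma'\le2/(1+w^2)\le2$ on all of $(0,1)$ with no case split. This is precisely the paper's proof, whose $z$ is your $g/2$.

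Your monotonicity fallback can also be completed. Set $z=g/2$, $s=\sqrt{1+z^2}$, and $\psi(z)=s(s+1)$; then $\sigma'=\psi(z)\operatorname{sech}^2z$, and for $z>0$,
\[
  \frac{\psi'(z)}{\psi(z)}=\frac{z(2s+1)}{s^2(s+1)}\le\frac{2z}{s}\le2\tanh z,
\]
where the last step is equivalent to $\sinh z\ge z$. Hence $\sigma'$, an even function of $z$, decreases in $|z|$ and is at most $\psi(0)=2$. That route is correct but longer than the one-line bound.
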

Figure~\ref{fig:outer-base-gate} shows $p$ together with its derivative.

\begin{figure}[h]
\centering
\begin{tikzpicture}[line cap=round,line join=round]
  \begin{scope}[shift={(0,0)}]
    \begin{scope}[xscale=2.80,yscale=2.15]
      \draw[->,thin] (-0.35,0)--(1.55,0) node[right] {$t$};
      \draw[->,thin] (0,-0.10)--(0,1.14) node[above] {$p(t)$};
      \draw[densely dashed,gray!60] (0,0.5)--(1,0.5)--(1,0);
      \draw[
        very thick,
        blue!75!black,
        line join=round
      ]
      % Keep the figure self-contained: these are samples of the function
      % in \eqref{eq:base-ramp}, including its exact affine tails.
      plot coordinates {
        (-0.320000,0.00000000000000) (-0.160000,0.00000000000000)
        (0.000000,0.00000000000000) (0.010000,0.00000000000000)
        (0.020000,0.00000000000000) (0.030000,0.00000000000000)
        (0.040000,0.00000000000006) (0.050000,0.00000000001344)
        (0.060000,0.00000000053915) (0.070000,0.00000000789055)
        (0.080000,0.00000006113785) (0.090000,0.00000030882110)
        (0.100000,0.00000115311413) (0.110000,0.00000344943167)
        (0.120000,0.00000872496211) (0.130000,0.00001937458581)
        (0.140000,0.00003880323955) (0.150000,0.00007149975813)
        (0.160000,0.00012304119292) (0.170000,0.00020003582544)
        (0.180000,0.00031001773533) (0.190000,0.00046130709991)
        (0.200000,0.00066284970146) (0.210000,0.00092404737712)
        (0.220000,0.00125458899826) (0.230000,0.00166428937373)
        (0.240000,0.00216294142916) (0.250000,0.00276018521106)
        (0.260000,0.00346539573696) (0.270000,0.00428759046364)
        (0.280000,0.00523535616310) (0.290000,0.00631679425724)
        (0.300000,0.00753948314055) (0.310000,0.00891045568357)
        (0.320000,0.01043618992837) (0.330000,0.01212261092877)
        (0.340000,0.01397510172459) (0.350000,0.01599852154445)
        (0.360000,0.01819722948264) (0.370000,0.02057511207473)
        (0.380000,0.02313561338761) (0.390000,0.02588176643132)
        (0.400000,0.02881622488483) (0.410000,0.03194129429851)
        (0.420000,0.03525896209025) (0.430000,0.03877092578769)
        (0.440000,0.04247861908541) (0.450000,0.04638323538394)
        (0.460000,0.05048574855805) (0.470000,0.05478693076711)
        (0.480000,0.05928736717205) (0.490000,0.06398746746316)
        (0.500000,0.06888747413445) (0.510000,0.07398746746316)
        (0.520000,0.07928736717205) (0.530000,0.08478693076711)
        (0.540000,0.09048574855805) (0.550000,0.09638323538394)
        (0.560000,0.10247861908541) (0.570000,0.10877092578769)
        (0.580000,0.11525896209025) (0.590000,0.12194129429851)
        (0.600000,0.12881622488483) (0.610000,0.13588176643132)
        (0.620000,0.14313561338761) (0.630000,0.15057511207473)
        (0.640000,0.15819722948264) (0.650000,0.16599852154445)
        (0.660000,0.17397510172459) (0.670000,0.18212261092877)
        (0.680000,0.19043618992837) (0.690000,0.19891045568357)
        (0.700000,0.20753948314055) (0.710000,0.21631679425724)
        (0.720000,0.22523535616310) (0.730000,0.23428759046364)
        (0.740000,0.24346539573696) (0.750000,0.25276018521106)
        (0.760000,0.26216294142916) (0.770000,0.27166428937373)
        (0.780000,0.28125458899826) (0.790000,0.29092404737712)
        (0.800000,0.30066284970146) (0.810000,0.31046130709991)
        (0.820000,0.32031001773533) (0.830000,0.33020003582544)
        (0.840000,0.34012304119292) (0.850000,0.35007149975813)
        (0.860000,0.36003880323955) (0.870000,0.37001937458581)
        (0.880000,0.38000872496211) (0.890000,0.39000344943167)
        (0.900000,0.40000115311413) (0.910000,0.41000030882110)
        (0.920000,0.42000006113785) (0.930000,0.43000000789055)
        (0.940000,0.44000000053915) (0.950000,0.45000000001344)
        (0.960000,0.46000000000006) (0.970000,0.47000000000000)
        (0.980000,0.48000000000000) (0.990000,0.49000000000000)
        (1.000000,0.50000000000000) (1.160000,0.66000000000000)
        (1.320000,0.82000000000000) (1.500000,1.00000000000000)
      };
      \draw[thin] (1,0.018)--(1,-0.018)
        node[below,font=\scriptsize] {$1$};
      \node[below left,font=\scriptsize] at (0,0) {$0$};
      \node[left,font=\scriptsize] at (0,0.5) {$1/2$};
      \node[circle,fill=black,inner sep=0.75pt] at (0,0) {};
      \node[circle,fill=black,inner sep=0.75pt] at (1,0.5) {};
      \node[font=\scriptsize,blue!75!black,align=center] at (1.20,1.10)
        {$p(t)=t-1/2$\\[2pt]$(t\ge1)$};
    \end{scope}
    \node[font=\small] at (1.68,-0.90) {(a) $p$};
  \end{scope}

  \begin{scope}[shift={(6.40,0)}]
    \begin{scope}[xscale=2.80,yscale=2.15]
      \draw[->,thin] (-0.35,0)--(1.55,0) node[right] {$t$};
      \draw[->,thin] (0,-0.10)--(0,1.14) node[above] {$p'(t)$};
      \draw[densely dashed,gray!60] (0,1)--(1,1)--(1,0);
      \draw[
        very thick,
        orange!85!black,
        line join=round
      ]
      % Samples of \eqref{eq:base-step}; no external data file is needed.
      plot coordinates {
        (-0.320000,0.00000000000000) (-0.160000,0.00000000000000)
        (0.000000,0.00000000000000) (0.010000,0.00000000000000)
        (0.020000,0.00000000000000) (0.030000,0.00000000000001)
        (0.040000,0.00000000003936) (0.050000,0.00000000590558)
        (0.060000,0.00000016740714) (0.070000,0.00000183136805)
        (0.080000,0.00001105028290) (0.090000,0.00004484695520)
        (0.100000,0.00013789379202) (0.110000,0.00034648877539)
        (0.120000,0.00074829126181) (0.130000,0.00143825932371)
        (0.140000,0.00252228028459) (0.150000,0.00411007246002)
        (0.160000,0.00630853169716) (0.170000,0.00921619888491)
        (0.180000,0.01291912301401) (0.190000,0.01748813114293)
        (0.200000,0.02297736991003) (0.210000,0.02942391617417)
        (0.220000,0.03684823540540) (0.230000,0.04525527391512)
        (0.240000,0.05463599170440) (0.250000,0.06496916912866)
        (0.260000,0.07622334863695) (0.270000,0.08835880031837)
        (0.280000,0.10132942563153) (0.290000,0.11508453678753)
        (0.300000,0.12957046939971) (0.310000,0.14473200302415)
        (0.320000,0.16051357808694) (0.330000,0.17686030856520)
        (0.340000,0.19371879790557) (0.350000,0.21103777134871)
        (0.360000,0.22876854144734) (0.370000,0.24686532549351)
        (0.380000,0.26528543417404) (0.390000,0.28398935038353)
        (0.400000,0.30294071603459) (0.410000,0.32210624316076)
        (0.420000,0.34145556380675) (0.430000,0.36096103129856)
        (0.440000,0.38059748359640) (0.450000,0.40034197764011)
        (0.460000,0.42017350195321) (0.470000,0.44007267331361)
        (0.480000,0.46002142204487) (0.490000,0.48000266943973)
        (0.500000,0.50000000000000) (0.510000,0.51999733056027)
        (0.520000,0.53997857795513) (0.530000,0.55992732668639)
        (0.540000,0.57982649804679) (0.550000,0.59965802235989)
        (0.560000,0.61940251640360) (0.570000,0.63903896870144)
        (0.580000,0.65854443619325) (0.590000,0.67789375683924)
        (0.600000,0.69705928396541) (0.610000,0.71601064961647)
        (0.620000,0.73471456582596) (0.630000,0.75313467450649)
        (0.640000,0.77123145855266) (0.650000,0.78896222865129)
        (0.660000,0.80628120209443) (0.670000,0.82313969143480)
        (0.680000,0.83948642191306) (0.690000,0.85526799697585)
        (0.700000,0.87042953060029) (0.710000,0.88491546321247)
        (0.720000,0.89867057436847) (0.730000,0.91164119968163)
        (0.740000,0.92377665136305) (0.750000,0.93503083087134)
        (0.760000,0.94536400829560) (0.770000,0.95474472608488)
        (0.780000,0.96315176459460) (0.790000,0.97057608382583)
        (0.800000,0.97702263008997) (0.810000,0.98251186885707)
        (0.820000,0.98708087698599) (0.830000,0.99078380111509)
        (0.840000,0.99369146830284) (0.850000,0.99588992753998)
        (0.860000,0.99747771971541) (0.870000,0.99856174067629)
        (0.880000,0.99925170873819) (0.890000,0.99965351122461)
        (0.900000,0.99986210620798) (0.910000,0.99995515304480)
        (0.920000,0.99998894971710) (0.930000,0.99999816863195)
        (0.940000,0.99999983259286) (0.950000,0.99999999409442)
        (0.960000,0.99999999996064) (0.970000,0.99999999999999)
        (0.980000,1.00000000000000) (0.990000,1.00000000000000)
        (1.000000,1.00000000000000) (1.160000,1.00000000000000)
        (1.320000,1.00000000000000) (1.500000,1.00000000000000)
      };
      \draw[thin] (1,0.018)--(1,-0.018)
        node[below,font=\scriptsize] {$1$};
      \node[below left,font=\scriptsize] at (0,0) {$0$};
      \node[left,font=\scriptsize] at (0,1) {$1$};
      \node[circle,fill=black,inner sep=0.75pt] at (0,0) {};
      \node[circle,fill=black,inner sep=0.75pt] at (1,1) {};
    \end{scope}
    \node[font=\small] at (1.68,-0.90) {(b) $p'$};
  \end{scope}

\end{tikzpicture}
\caption{The scalar function $p$ and its derivative.}
\label{fig:outer-base-gate}
\end{figure}
\begin{proof}[Proof of Lemma~\ref{lem:gate-transformations}]
Direct computation gives the explicit form of $p'$
\begin{equation}
p'(t)=
\begin{cases}
0, & t\le0,\\[2pt]
\displaystyle
\frac{\exp(-\frac{1}{t})}
     {\exp(-\frac{1}{t})+\exp(-\frac{1}{1-t})},
& 0<t<1,\\[8pt]
1, & t\ge1.
\end{cases}
\label{eq:base-step}
\end{equation}
The middle branch of \eqref{eq:base-step} extends smoothly to
$0$ and $1$, with values $0$ and $1$, respectively, and all derivatives
of positive order vanishing at both endpoints.
Moreover, the identity $p'(t)+p'(1-t)=1$ on $(0,1)$ gives
$\int_0^1p'(v)\,{\rm d}v=1/2$.
Thus the integral branch in \eqref{eq:base-ramp} matches the two
affine tails in both value and all derivatives, proving
$p\in C^\infty(\R)$.

Equation~\eqref{eq:base-step} also gives $0\le p'(t)\le1$
and $p''(t)=0$ outside $(0,1)$.
For $0<t<1$, set $v:=2t-1$ and $z:=2v/(1-v^2)$.
Differentiating \eqref{eq:base-step} yields
\begin{equation}
  0\le p''(t)
  =\frac{2(1+v^2)}{(1-v^2)^2\cosh^2 z}
  \le\frac{2}{1+v^2}
  \le2,
\label{eq:base-ramp-second-derivative}
\end{equation}
where we used
$\cosh^2z\ge1+z^2=(1+v^2)^2/(1-v^2)^2$.
We finish the proof.
\end{proof}
% \subsection{Properties of $q$}
\subsection{Properties of \texorpdfstring{$q$}{q}}
\label{app:outer-gate-proof}
\begin{lemma}[Properties of $q$]
\label{lem:outer-gate}
The function $q$ belongs to $C^\infty(\R)$, with $q(t)=0$ for
$t\le1/5$ and $q(t)=1$ for $t\ge1$.  For every $t\in\R$,
\[
  0\le q(t)\le1,\qquad
  0\le q'(t)\le\frac52,\quad \text{and} \quad
  |q''(t)|\le\frac{75}{2}.
\]
\end{lemma}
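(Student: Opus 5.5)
The plan is to reduce everything to properties of $p'$ from Lemma~\ref{lem:gate-transformations}, using the chain rule for the affine substitution $t\mapsto(5t-1)/4$. By definition, $q(t)=p'\bigl((5t-1)/4\bigr)$. The proof of Lemma~\ref{lem:gate-transformations} shows that $p\in C^\infty(\R)$, so $p'\in C^\infty(\R)$. Composing with an affine map then gives $q\in C^\infty(\R)$.

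For the plateau values, I will use the explicit form \eqref{eq:base-step} of $p'$. If $t\le1/5$, then $(5t-1)/4\le0$, so $q(t)=p'(\cdot)=0$. If $t\ge1$, then $(5t-1)/4\ge1$, so $q(t)=1$. The bound $0\le q\le1$ is immediate from $0\le p'\le1$.

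For the derivative bounds, the chain rule gives
\[
  q'(t)=\tfrac54\,p''\!\left(\tfrac{5t-1}{4}\right),
  \qquad
  q''(t)=\tfrac{25}{16}\,p'''\!\left(\tfrac{5t-1}{4}\right).
\]
Since $0\le p''\le2$ by Lemma~\ref{lem:gate-transformations}, this yields $0\le q'\le5/2$.

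The second-derivative bound $|q''|\le75/2$ is the main obstacle. It is equivalent to $|p'''|\le24$, and this is not recorded earlier in the paper. I would prove it directly on $(0,1)$, since $p'''$ vanishes outside this interval. Set $v=2t-1$ and $z=2v/(1-v^2)$, as in the proof of Lemma~\ref{lem:gate-transformations}. In these variables,
\[
  p'(t)=\tfrac12\bigl(1+\tanh z\bigr).
\]
This is consistent with \eqref{eq:base-ramp-second-derivative}, because $dz/dt=4(1+v^2)/(1-v^2)^2$.

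Differentiating once more gives
\[
  p'''=\frac{d}{dt}\!\left[\tfrac12 z'\,\mathrm{sech}^2 z\right]
  =\tfrac12 z''\,\mathrm{sech}^2 z-(z')^2\,\mathrm{sech}^2 z\,\tanh z .
\]
Each term is a rational function of $v$ times $\mathrm{sech}^2 z$. I would bound the two terms as follows.
\begin{itemize}
  \item Use $\mathrm{sech}^2 z\le 1/(1+z^2)$ together with $\mathrm{sech}^2 z\le 4e^{-2|z|}$.
  \item Note that $1+z^2=(1+v^2)^2/(1-v^2)^2$, which cancels the $(1-v^2)$ denominators. The cancellation is complete for the $z''$ term and partial for the $(z')^2$ term.
  \item For the $(z')^2$ term, use $|\tanh z|\le\min\{1,|z|\}$.
  \item Where $|v|$ is close to $1$, the exponential decay of $\mathrm{sech}^2 z$ dominates any power of $1/(1-v^2)$.
\end{itemize}
A crude split into $|v|\le1/2$ and $|v|>1/2$ should give a numerical bound. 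If this lands above $24$, I would refine the constants, for instance by maximizing $z^k\,\mathrm{sech}^2 z$ explicitly. It is also worth checking whether the stated constant $75/2$ could instead be derived from a cruder intended bound on $p'''$.
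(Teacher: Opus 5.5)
Your reduction to $p$ is correct and matches the paper: smoothness, the plateaus, $0\le q\le1$, $0\le q'\le\tfrac52$, and the equivalence of $|q''|\le\tfrac{75}{2}$ with $|p'''|\le24$. The gap is the one step you leave open, and the route you sketch for it cannot succeed. Write $p'''=T_1-T_2$ with $T_1:=\tfrac12 z''\,\mathrm{sech}^2 z$ and $T_2:=(z')^2\,\mathrm{sech}^2 z\,\tanh z$. Bounding the two terms separately and adding them can never give anything better than $\sup(|T_1|+|T_2|)$. This supremum is about $24.2$: near $z\approx1.31$ one has $|T_1|\approx7.54$ and $|T_2|\approx16.66$. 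So no refinement of constants along your lines, including exact maximization of $z^k\mathrm{sech}^2 z$, gets below $|q''|\le\tfrac{25}{16}\cdot 24.2>\tfrac{75}{2}$. The true maximum of $|p'''|$ is below $10$, so the loss comes entirely from discarding a cancellation. Separately, your claim that $\mathrm{sech}^2 z\le 1/(1+z^2)$ fully cancels the denominators in the $z''$ term is false. Since $\tfrac12 z''=8v(3+v^2)/(1-v^2)^3$ grows like $|z|^3$, a factor $(1-v^2)^{-1}$ survives. Only the exponential bound, or $\cosh^2 z\ge 1+z^2+z^4/3$, controls this term.

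The missing idea is a sign observation: $T_1$ and $T_2$ both have the sign of $z$, equivalently of $v$. Hence $|p'''|=\bigl||T_1|-|T_2|\bigr|\le\max\{|T_1|,|T_2|\}$. To finish, set $\sigma:=\sqrt{1+z^2}=(1+v^2)/(1-v^2)$ and $\psi:=\sigma(\sigma+1)$. Then $z'=2\psi$ and $\tfrac12 z''=2\psi\psi'$, with $|\psi'|=|z|(2\sigma+1)/\sigma\le 2\sigma+1\le 2\psi$. It follows that both $|T_1|$ and $|T_2|$ are at most $4\psi^2\,\mathrm{sech}^2 z$. Finally, $\psi^2=\sigma^2(\sigma+1)^2\le 2(\sigma^4+\sigma^2+1)=6\bigl(1+z^2+\tfrac{z^4}{3}\bigr)\le 6\cosh^2 z$, which gives $|p'''|\le 24$. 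This is the paper's argument. With the same per-term bounds but the triangle inequality in place of the sign observation, you would only get $48$.
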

Figure~\ref{fig:outer-gate} shows $q$ and its derivative.

\begin{figure}[h]
\centering
\begin{tikzpicture}[line cap=round,line join=round]
  \begin{scope}[xscale=4.1,yscale=2.2]
    \draw[->,thin] (-0.07,0)--(1.20,0) node[right] {$t$};
    \draw[->,thin] (0,-0.05)--(0,1.20) node[above] {$q(t)$};
    \draw[densely dashed,gray!60] (0,1)--(1,1)--(1,0);
    \draw[very thick,blue!75!black] plot coordinates {
        (-0.060000,0.0000000000) (0.000000,0.0000000000) (0.100000,0.0000000000) (0.200000,0.0000000000)
        (0.205000,0.0000000000) (0.210000,0.0000000000) (0.215000,0.0000000000) (0.220000,0.0000000000)
        (0.225000,0.0000000000) (0.230000,0.0000000000) (0.235000,0.0000000003) (0.240000,0.0000000059)
        (0.245000,0.0000000549) (0.250000,0.0000003270) (0.255000,0.0000014104) (0.260000,0.0000047743)
        (0.265000,0.0000134132) (0.270000,0.0000325506) (0.275000,0.0000702614) (0.280000,0.0001378938)
        (0.285000,0.0002502291) (0.290000,0.0004253689) (0.295000,0.0006843880) (0.300000,0.0010508098)
        (0.305000,0.0015499701) (0.310000,0.0022083321) (0.315000,0.0030528002) (0.320000,0.0041100725)
        (0.325000,0.0054060557) (0.330000,0.0069653597) (0.335000,0.0088108767) (0.340000,0.0109634475)
        (0.345000,0.0134416108) (0.350000,0.0162614305) (0.355000,0.0194363936) (0.360000,0.0229773699)
        (0.365000,0.0268926254) (0.370000,0.0311878819) (0.375000,0.0358664131) (0.380000,0.0409291721)
        (0.385000,0.0463749414) (0.390000,0.0522005001) (0.395000,0.0584008036) (0.400000,0.0649691691)
        (0.405000,0.0718974651) (0.410000,0.0791762984) (0.415000,0.0867951988) (0.420000,0.0947427965)
        (0.425000,0.1030069911) (0.430000,0.1115751114) (0.435000,0.1204340639) (0.440000,0.1295704694)
        (0.445000,0.1389707878) (0.450000,0.1486214304) (0.455000,0.1585088600) (0.460000,0.1686196794)
        (0.465000,0.1789407071) (0.470000,0.1894590435) (0.475000,0.2001621254) (0.480000,0.2110377713)
        (0.485000,0.2220742175) (0.490000,0.2332601450) (0.495000,0.2445847006) (0.500000,0.2560375087)
        (0.505000,0.2676086787) (0.510000,0.2792888061) (0.515000,0.2910689681) (0.520000,0.3029407160)
        (0.525000,0.3148960627) (0.530000,0.3269274674) (0.535000,0.3390278174) (0.540000,0.3511904071)
        (0.545000,0.3634089153) (0.550000,0.3756773807) (0.555000,0.3879901753) (0.560000,0.4003419776)
        (0.565000,0.4127277438) (0.570000,0.4251426787) (0.575000,0.4375822059) (0.580000,0.4500419373)
        (0.585000,0.4625176424) (0.590000,0.4750052168) (0.595000,0.4875006513) (0.600000,0.5000000000)
        (0.605000,0.5124993487) (0.610000,0.5249947832) (0.615000,0.5374823576) (0.620000,0.5499580627)
        (0.625000,0.5624177941) (0.630000,0.5748573213) (0.635000,0.5872722562) (0.640000,0.5996580224)
        (0.645000,0.6120098247) (0.650000,0.6243226193) (0.655000,0.6365910847) (0.660000,0.6488095929)
        (0.665000,0.6609721826) (0.670000,0.6730725326) (0.675000,0.6851039373) (0.680000,0.6970592840)
        (0.685000,0.7089310319) (0.690000,0.7207111939) (0.695000,0.7323913213) (0.700000,0.7439624913)
        (0.705000,0.7554152994) (0.710000,0.7667398550) (0.715000,0.7779257825) (0.720000,0.7889622287)
        (0.725000,0.7998378746) (0.730000,0.8105409565) (0.735000,0.8210592929) (0.740000,0.8313803206)
        (0.745000,0.8414911400) (0.750000,0.8513785696) (0.755000,0.8610292122) (0.760000,0.8704295306)
        (0.765000,0.8795659361) (0.770000,0.8884248886) (0.775000,0.8969930089) (0.780000,0.9052572035)
        (0.785000,0.9132048012) (0.790000,0.9208237016) (0.795000,0.9281025349) (0.800000,0.9350308309)
        (0.805000,0.9415991964) (0.810000,0.9477994999) (0.815000,0.9536250586) (0.820000,0.9590708279)
        (0.825000,0.9641335869) (0.830000,0.9688121181) (0.835000,0.9731073746) (0.840000,0.9770226301)
        (0.845000,0.9805636064) (0.850000,0.9837385695) (0.855000,0.9865583892) (0.860000,0.9890365525)
        (0.865000,0.9911891233) (0.870000,0.9930346403) (0.875000,0.9945939443) (0.880000,0.9958899275)
        (0.885000,0.9969471998) (0.890000,0.9977916679) (0.895000,0.9984500299) (0.900000,0.9989491902)
        (0.905000,0.9993156120) (0.910000,0.9995746311) (0.915000,0.9997497709) (0.920000,0.9998621062)
        (0.925000,0.9999297386) (0.930000,0.9999674494) (0.935000,0.9999865868) (0.940000,0.9999952257)
        (0.945000,0.9999985896) (0.950000,0.9999996730) (0.955000,0.9999999451) (0.960000,0.9999999941)
        (0.965000,0.9999999997) (0.970000,1.0000000000) (0.975000,1.0000000000) (0.980000,1.0000000000)
        (0.985000,1.0000000000) (0.990000,1.0000000000) (0.995000,1.0000000000) (1.000000,1.0000000000)
        (1.100000,1.0000000000) (1.160000,1.0000000000)
    };
    \foreach \x/\lab in {0.2/{1/5},1/{1}}
      \draw[thin] (\x,0.018)--(\x,-0.018)
        node[below,font=\scriptsize] {$\lab$};
    \node[below left,font=\scriptsize] at (0,0) {$0$};
    \node[left,font=\scriptsize] at (0,1) {$1$};
    \node[circle,fill=black,inner sep=0.75pt] at (0.2,0) {};
    \node[circle,fill=black,inner sep=0.75pt] at (1,1) {};
  \end{scope}
  \node[font=\small] at (2.30,-0.90) {(a) $q$};
  \begin{scope}[shift={(6.05,0)},xscale=4.1,yscale=0.88]
    \draw[->,thin] (-0.07,0)--(1.20,0) node[right] {$t$};
    \draw[->,thin] (0,-0.125)--(0,3.0) node[above] {$q'(t)$};
    \draw[densely dashed,gray!60] (0,2.5)--(0.6,2.5)--(0.6,0);
    \draw[very thick,orange!85!black] plot coordinates {
        (-0.060000,0.0000000000) (0.000000,0.0000000000) (0.100000,0.0000000000) (0.200000,0.0000000000)
        (0.205000,0.0000000000) (0.210000,0.0000000000) (0.215000,0.0000000000) (0.220000,0.0000000000)
        (0.225000,0.0000000000) (0.230000,0.0000000066) (0.235000,0.0000002204) (0.240000,0.0000029610)
        (0.245000,0.0000217567) (0.250000,0.0001051021) (0.255000,0.0003750333) (0.260000,0.0010679319)
        (0.265000,0.0025596087) (0.270000,0.0053630811) (0.275000,0.0100989538) (0.280000,0.0174471169)
        (0.285000,0.0280915677) (0.290000,0.0426686395) (0.295000,0.0617252787) (0.300000,0.0856902491)
        (0.305000,0.1148582860) (0.310000,0.1493855077) (0.315000,0.1892936387) (0.320000,0.2344805017)
        (0.325000,0.2847345064) (0.330000,0.3397512837) (0.335000,0.3991510623) (0.340000,0.4624957890)
        (0.345000,0.5293053194) (0.350000,0.5990722646) (0.355000,0.6712752629) (0.360000,0.7453905791)
        (0.365000,0.8209020247) (0.370000,0.8973092512) (0.375000,0.9741345062) (0.380000,1.0509279677)
        (0.385000,1.1272717802) (0.390000,1.2027829264) (0.395000,1.2771150657) (0.400000,1.3499594709)
        (0.405000,1.4210451880) (0.410000,1.4901385388) (0.415000,1.5570420782) (0.420000,1.6215931101)
        (0.425000,1.6836618571) (0.430000,1.7431493683) (0.435000,1.7999852436) (0.440000,1.8541252397)
        (0.445000,1.9055488185) (0.450000,1.9542566840) (0.455000,2.0002683521) (0.460000,2.0436197858)
        (0.465000,2.0843611225) (0.470000,2.1225545130) (0.475000,2.1582720902) (0.480000,2.1915940726)
        (0.485000,2.2226070132) (0.490000,2.2514021925) (0.495000,2.2780741561) (0.500000,2.3027193942)
        (0.505000,2.3254351559) (0.510000,2.3463183944) (0.515000,2.3654648328) (0.520000,2.3829681440)
        (0.525000,2.3989192346) (0.530000,2.4134056251) (0.535000,2.4265109161) (0.540000,2.4383143325)
        (0.545000,2.4488903374) (0.550000,2.4583083072) (0.555000,2.4666322605) (0.560000,2.4739206343)
        (0.565000,2.4802260997) (0.570000,2.4855954136) (0.575000,2.4900692990) (0.580000,2.4936823508)
        (0.585000,2.4964629621) (0.590000,2.4984332691) (0.595000,2.4996091105) (0.600000,2.5000000000)
        (0.605000,2.4996091105) (0.610000,2.4984332691) (0.615000,2.4964629621) (0.620000,2.4936823508)
        (0.625000,2.4900692990) (0.630000,2.4855954136) (0.635000,2.4802260997) (0.640000,2.4739206343)
        (0.645000,2.4666322605) (0.650000,2.4583083072) (0.655000,2.4488903374) (0.660000,2.4383143325)
        (0.665000,2.4265109161) (0.670000,2.4134056251) (0.675000,2.3989192346) (0.680000,2.3829681440)
        (0.685000,2.3654648328) (0.690000,2.3463183944) (0.695000,2.3254351559) (0.700000,2.3027193942)
        (0.705000,2.2780741561) (0.710000,2.2514021925) (0.715000,2.2226070132) (0.720000,2.1915940726)
        (0.725000,2.1582720902) (0.730000,2.1225545130) (0.735000,2.0843611225) (0.740000,2.0436197858)
        (0.745000,2.0002683521) (0.750000,1.9542566840) (0.755000,1.9055488185) (0.760000,1.8541252397)
        (0.765000,1.7999852436) (0.770000,1.7431493683) (0.775000,1.6836618571) (0.780000,1.6215931101)
        (0.785000,1.5570420782) (0.790000,1.4901385388) (0.795000,1.4210451880) (0.800000,1.3499594709)
        (0.805000,1.2771150657) (0.810000,1.2027829264) (0.815000,1.1272717802) (0.820000,1.0509279677)
        (0.825000,0.9741345062) (0.830000,0.8973092512) (0.835000,0.8209020247) (0.840000,0.7453905791)
        (0.845000,0.6712752629) (0.850000,0.5990722646) (0.855000,0.5293053194) (0.860000,0.4624957890)
        (0.865000,0.3991510623) (0.870000,0.3397512837) (0.875000,0.2847345064) (0.880000,0.2344805017)
        (0.885000,0.1892936387) (0.890000,0.1493855077) (0.895000,0.1148582860) (0.900000,0.0856902491)
        (0.905000,0.0617252787) (0.910000,0.0426686395) (0.915000,0.0280915677) (0.920000,0.0174471169)
        (0.925000,0.0100989538) (0.930000,0.0053630811) (0.935000,0.0025596087) (0.940000,0.0010679319)
        (0.945000,0.0003750333) (0.950000,0.0001051021) (0.955000,0.0000217567) (0.960000,0.0000029610)
        (0.965000,0.0000002204) (0.970000,0.0000000066) (0.975000,0.0000000000) (0.980000,0.0000000000)
        (0.985000,0.0000000000) (0.990000,0.0000000000) (0.995000,0.0000000000) (1.000000,0.0000000000)
        (1.100000,0.0000000000) (1.160000,0.0000000000)
    };
    \foreach \x/\lab in {0.2/{1/5},0.6/{3/5},1/{1}}
      \draw[thin] (\x,0.045)--(\x,-0.045)
        node[below,font=\scriptsize] {$\lab$};
    \node[below left,font=\scriptsize] at (0,0) {$0$};
    \node[left,font=\scriptsize] at (0,2.5) {$5/2$};
    \node[circle,fill=black,inner sep=0.75pt] at (0.2,0) {};
    \node[circle,fill=black,inner sep=0.75pt] at (0.6,2.5) {};
    \node[circle,fill=black,inner sep=0.75pt] at (1,0) {};
  \end{scope}
  \node[font=\small] at (8.35,-0.90) {(b) $q'$};
\end{tikzpicture}
\caption{The function $q$ and its derivative.}
\label{fig:outer-gate}
\end{figure}
\begin{proof}[Proof of Lemma~\ref{lem:outer-gate}]
Lemma~\ref{lem:gate-transformations} gives $q\in C^\infty(\R)$ and
$0\le q\le1$.  The plateau identities follow directly from
\eqref{eq:base-step}.  Set $r:=(5t-1)/4$.  Differentiating gives
\[
  q'(t)=\frac54p''(r),
  \qquad\mbox{and}\qquad
  q''(t)=\frac{25}{16}p'''(r).
\]
The same lemma gives $0\le p''\le2$, so $0\le q'\le5/2$.

It remains to bound $q''$, so we first show that $|p'''|\le24$.
Fix $0<r<1$ and set $v:=2r-1$ and $z:=2v/(1-v^2)$.
Let $\sigma:=\sqrt{1+z^2}=(1+v^2)/(1-v^2)$ and
$\psi(z):=\sigma(\sigma+1)$.
Equation~\eqref{eq:base-ramp-second-derivative} gives
$p''(r)=\psi(z)/(\cosh^2z)$, and direct differentiation gives
${\rm d}z/({\rm d}r)=2\psi(z)$.  Therefore,
\[
  p'''(r)=\frac{2\psi(z)}{\cosh^2z}
    \bigl[\psi'(z)-2\psi(z)\tanh z\bigr].
\]
Both $\psi'(z)$ and $2\psi(z)\tanh z$ have the sign of $z$, and
\[
  |\psi'(z)|=\frac{|z|(2\sigma+1)}{\sigma}
  \le2\sigma+1\le2\psi(z),
  \qquad\mbox{and}\qquad
  |2\psi(z)\tanh z|\le2\psi(z).
\]
Their difference therefore has absolute value at most $2\psi(z)$.
Moreover,
\[
  \psi(z)^2=\sigma^2(\sigma+1)^2
  \le2(\sigma^4+\sigma^2+1)
  \le6\cosh^2z.
\]
The first inequality follows from
$2(\sigma^4+\sigma^2+1)-\sigma^2(\sigma+1)^2
=\sigma^2(\sigma-1)^2+2\ge0$; the second uses
$\cosh^2z\ge1+z^2+z^4/3=(\sigma^4+\sigma^2+1)/3$.
Hence $|p'''(r)|\le4\psi(z)^2/(\cosh^2z)\le24$.
Since $p'''=0$ outside $(0,1)$, we conclude that
$|q''|\le(25/16)\cdot24=75/2$ on $\R$.
We finish the proof.
\end{proof}

\subsection{Properties of \texorpdfstring{$e_{\s}$}{e\_s} and \texorpdfstring{$e_{\bm{\nu}}$}{e\_nu}}
% \subsection{Properties of $e_{\texorpdfstring{\s}{s}}$ and $e_{\texorpdfstring{\bm\nu}{nu}}$}
\label{app:identity-extensions-proof}
\begin{lemma}[Properties of $e_{\s}$ and $e_{\bm{\nu}}$]
\label{lem:identity-extensions}
The functions $e_{\s},e_{\bm{\nu}}$ belong to $C^\infty(\R)$, with
$e_{\s}(t)=t$ for $|t|\le2$ and $e_{\bm{\nu}}(t)=t$ for $|t|\le21$.
They are constant on each tail outside $[-3,3]$ and $[-22,22]$,
respectively.  For every $t\in\R$,
\[
  |e_{\s}(t)|\le\frac52,\qquad\mbox{and}\qquad |e_{\bm{\nu}}(t)|\le\frac{43}{2}.
\]
Moreover, for $e\in\{e_{\s},e_{\bm{\nu}}\}$ and every $t\in\R$,
\[
0\le e'(t)\le1,\qquad\mbox{and}\qquad|e''(t)|\le2.
\]
\end{lemma}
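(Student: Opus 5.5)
The plan is to reduce everything to the properties of $p$ in Lemma~\ref{lem:gate-transformations}, using that both functions have the common form
\[
  e_{c}(t):=p(t+c)-p(t-c+1)-\tfrac{2c-1}{2},
\]
with $c=3$ giving $e_{\s}$ and $c=22$ giving $e_{\bm{\nu}}$. Smoothness is immediate, since $e_c$ is a difference of shifts of the $C^\infty$ function $p$, plus a constant. Differentiating gives
\[
  e_c'(t)=p'(t+c)-p'(t-c+1),\qquad e_c''(t)=p''(t+c)-p''(t-c+1).
\]

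First I will identify the regions where $e_c$ is the identity or constant, using the explicit branches of $p$ in \eqref{eq:base-ramp}.
\begin{itemize}
  \item \emph{Identity region.} For $|t|\le c-1$, we have $t+c\ge1$, so $p(t+c)=t+c-\tfrac12$. We also have $t-c+1\le0$, so $p(t-c+1)=0$. Hence $e_c(t)=t+c-\tfrac12-\tfrac{2c-1}{2}=t$. This gives $|t|\le2$ for $e_{\s}$ and $|t|\le21$ for $e_{\bm{\nu}}$.
  \item \emph{Right tail.} For $t\ge c$, both arguments satisfy $t+c\ge1$ and $t-c+1\ge1$. Both $p$ terms are then affine, and $e_c(t)=(t+c)-(t-c+1)-\tfrac{2c-1}{2}=\tfrac{2c-1}{2}$.
  \item \emph{Left tail.} For $t\le -c$, both arguments are $\le0$, so $e_c(t)=-\tfrac{2c-1}{2}$.
\end{itemize}
So $e_c$ is constant on each tail outside $[-c,c]$, which is $[-3,3]$ and $[-22,22]$ respectively.

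Next come the derivative bounds. The two shifts $t+c$ and $t-c+1$ differ by $2c-1\ge1$. So at most one of them lies in $(0,1)$, the only interval where $p'$ is non-constant and $p''\ne0$. Suppose $t+c\in(0,1)$. Then $t-c+1\le0$, so $e_c'(t)=p'(t+c)\in[0,1]$. Suppose instead $t-c+1\in(0,1)$. Then $t+c\ge1$, so $e_c'(t)=1-p'(t-c+1)\in[0,1]$. Otherwise $e_c'$ is $0$ or $1$. In the same way $e_c''$ equals a single $\pm p''$ term or vanishes, so $|e_c''|\le2$ by Lemma~\ref{lem:gate-transformations}.

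Finally, since $e_c'\ge0$, the function $e_c$ is nondecreasing. Its range is therefore between the two tail constants, giving $|e_c|\le\tfrac{2c-1}{2}$, that is, $\tfrac52$ and $\tfrac{43}{2}$.

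The only step needing care is the claim that the two transition windows are disjoint. This is just the observation that $2c-1\ge1$, so I expect no real obstacle.
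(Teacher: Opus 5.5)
Your proof is correct and follows essentially the same route as the paper: you read the identity region and the constant tails off the explicit branches of $p$, bound $e'$ and $e''$ using Lemma~\ref{lem:gate-transformations}, and use monotonicity to get $|e|\le(2c-1)/2$. The main difference is that your disjoint-window case split for $e'$ and $e''$ is valid but unnecessary, since $p'$ is nondecreasing with values in $[0,1]$ and $0\le p''\le2$, which directly give $0\le p'(t+c)-p'(t-c+1)\le1$ and $|p''(t+c)-p''(t-c+1)|\le2$; this is how the paper argues.
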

Figure~\ref{fig:identity-extension} shows $e_{\s}$ and its derivative as an example.

\begin{figure}[h]
\centering
\begin{tikzpicture}[line cap=round,line join=round]
  \begin{scope}[shift={(2.50,1.43)},xscale=0.68,yscale=0.46]
    \draw[->,thin] (-3.65,0)--(3.80,0) node[right] {$t$};
    \draw[->,thin] (0,-3.0)--(0,3.05) node[above] {$e_{\s}(t)$};
    \draw[densely dashed,gray!60] (-3,-2.5)--(0,-2.5)
                                (0,2.5)--(3,2.5);
    \draw[densely dashed,gray!60] (-3,0)--(-3,-2.5)
                                 (-2,0)--(-2,-2)
                                 (2,0)--(2,2)
                                 (3,0)--(3,2.5);
    \draw[very thick,blue!75!black] plot coordinates {
        (-3.600000,-2.5000000000) (-3.000000,-2.5000000000) (-2.975000,-2.5000000000) (-2.950000,-2.5000000000)
        (-2.925000,-2.4999999766) (-2.900000,-2.4999988469) (-2.875000,-2.4999868099) (-2.850000,-2.4999285002)
        (-2.825000,-2.4997496000) (-2.800000,-2.4993371503) (-2.775000,-2.4985510699) (-2.750000,-2.4972398148)
        (-2.725000,-2.4952547413) (-2.700000,-2.4924605169) (-2.675000,-2.4887410344) (-2.650000,-2.4840014785)
        (-2.625000,-2.4781676634) (-2.600000,-2.4711837751) (-2.575000,-2.4630094385) (-2.550000,-2.4536167646)
        (-2.525000,-2.4429877874) (-2.500000,-2.4311125259) (-2.475000,-2.4179877874) (-2.450000,-2.4036167646)
        (-2.425000,-2.3880094385) (-2.400000,-2.3711837751) (-2.375000,-2.3531676634) (-2.350000,-2.3340014785)
        (-2.325000,-2.3137410344) (-2.300000,-2.2924605169) (-2.275000,-2.2702547413) (-2.250000,-2.2472398148)
        (-2.225000,-2.2235510699) (-2.200000,-2.1993371503) (-2.175000,-2.1747496000) (-2.150000,-2.1499285002)
        (-2.125000,-2.1249868099) (-2.100000,-2.0999988469) (-2.075000,-2.0749999766) (-2.050000,-2.0500000000)
        (-2.025000,-2.0250000000) (-2.000000,-2.0000000000) (-1.000000,-1.0000000000) (0.000000,0.0000000000)
        (1.000000,1.0000000000) (2.000000,2.0000000000) (2.025000,2.0250000000) (2.050000,2.0500000000)
        (2.075000,2.0749999766) (2.100000,2.0999988469) (2.125000,2.1249868099) (2.150000,2.1499285002)
        (2.175000,2.1747496000) (2.200000,2.1993371503) (2.225000,2.2235510699) (2.250000,2.2472398148)
        (2.275000,2.2702547413) (2.300000,2.2924605169) (2.325000,2.3137410344) (2.350000,2.3340014785)
        (2.375000,2.3531676634) (2.400000,2.3711837751) (2.425000,2.3880094385) (2.450000,2.4036167646)
        (2.475000,2.4179877874) (2.500000,2.4311125259) (2.525000,2.4429877874) (2.550000,2.4536167646)
        (2.575000,2.4630094385) (2.600000,2.4711837751) (2.625000,2.4781676634) (2.650000,2.4840014785)
        (2.675000,2.4887410344) (2.700000,2.4924605169) (2.725000,2.4952547413) (2.750000,2.4972398148)
        (2.775000,2.4985510699) (2.800000,2.4993371503) (2.825000,2.4997496000) (2.850000,2.4999285002)
        (2.875000,2.4999868099) (2.900000,2.4999988469) (2.925000,2.4999999766) (2.950000,2.5000000000)
        (2.975000,2.5000000000) (3.000000,2.5000000000) (3.600000,2.5000000000)
    };
    \foreach \x in {-3,-2,2,3}
      \draw[thin] (\x,0.065)--(\x,-0.065)
        node[below,font=\scriptsize] {$\x$};
    \node[below right,font=\scriptsize] at (0,-0.03) {$0$};
    \node[left,font=\scriptsize] at (0,2.5) {$5/2$};
    \node[right,font=\scriptsize] at (0,-2.5) {$-5/2$};
    \node[blue!75!black,font=\scriptsize,align=center] at (-1.1,1.1)
      {$e_{\s}(t)=t$\\[2pt]$(|t|\le2)$};
    \foreach \x/\y in {-3/-2.5,-2/-2,2/2,3/2.5}
      \node[circle,fill=black,inner sep=0.75pt] at (\x,\y) {};
  \end{scope}
  \node[font=\small] at (2.50,-0.90) {(a) $e_{\s}$};
  \begin{scope}[shift={(8.55,0)},xscale=0.68,yscale=2.2]
    \draw[->,thin] (-3.65,0)--(3.80,0) node[right] {$t$};
    \draw[->,thin] (0,-0.05)--(0,1.285) node[above] {$e_{\s}'(t)$};
    \draw[densely dashed,gray!60] (-2,0)--(-2,1)
                                 (2,0)--(2,1);
    \draw[very thick,orange!85!black] plot coordinates {
        (-3.600000,0.0000000000) (-3.000000,0.0000000000) (-2.975000,0.0000000000) (-2.950000,0.0000000059)
        (-2.925000,0.0000047743) (-2.900000,0.0001378938) (-2.875000,0.0010508098) (-2.850000,0.0041100725)
        (-2.825000,0.0109634475) (-2.800000,0.0229773699) (-2.775000,0.0409291721) (-2.750000,0.0649691691)
        (-2.725000,0.0947427965) (-2.700000,0.1295704694) (-2.675000,0.1686196794) (-2.650000,0.2110377713)
        (-2.625000,0.2560375087) (-2.600000,0.3029407160) (-2.575000,0.3511904071) (-2.550000,0.4003419776)
        (-2.525000,0.4500419373) (-2.500000,0.5000000000) (-2.475000,0.5499580627) (-2.450000,0.5996580224)
        (-2.425000,0.6488095929) (-2.400000,0.6970592840) (-2.375000,0.7439624913) (-2.350000,0.7889622287)
        (-2.325000,0.8313803206) (-2.300000,0.8704295306) (-2.275000,0.9052572035) (-2.250000,0.9350308309)
        (-2.225000,0.9590708279) (-2.200000,0.9770226301) (-2.175000,0.9890365525) (-2.150000,0.9958899275)
        (-2.125000,0.9989491902) (-2.100000,0.9998621062) (-2.075000,0.9999952257) (-2.050000,0.9999999941)
        (-2.025000,1.0000000000) (-2.000000,1.0000000000) (-1.000000,1.0000000000) (0.000000,1.0000000000)
        (1.000000,1.0000000000) (2.000000,1.0000000000) (2.025000,1.0000000000) (2.050000,0.9999999941)
        (2.075000,0.9999952257) (2.100000,0.9998621062) (2.125000,0.9989491902) (2.150000,0.9958899275)
        (2.175000,0.9890365525) (2.200000,0.9770226301) (2.225000,0.9590708279) (2.250000,0.9350308309)
        (2.275000,0.9052572035) (2.300000,0.8704295306) (2.325000,0.8313803206) (2.350000,0.7889622287)
        (2.375000,0.7439624913) (2.400000,0.6970592840) (2.425000,0.6488095929) (2.450000,0.5996580224)
        (2.475000,0.5499580627) (2.500000,0.5000000000) (2.525000,0.4500419373) (2.550000,0.4003419776)
        (2.575000,0.3511904071) (2.600000,0.3029407160) (2.625000,0.2560375087) (2.650000,0.2110377713)
        (2.675000,0.1686196794) (2.700000,0.1295704694) (2.725000,0.0947427965) (2.750000,0.0649691691)
        (2.775000,0.0409291721) (2.800000,0.0229773699) (2.825000,0.0109634475) (2.850000,0.0041100725)
        (2.875000,0.0010508098) (2.900000,0.0001378938) (2.925000,0.0000047743) (2.950000,0.0000000059)
        (2.975000,0.0000000000) (3.000000,0.0000000000) (3.600000,0.0000000000)
    };
    \foreach \x in {-3,-2,2,3}
      \draw[thin] (\x,0.018)--(\x,-0.018)
        node[below,font=\scriptsize] {$\x$};
    \node[below right,font=\scriptsize] at (0,0) {$0$};
    \node[above left,font=\scriptsize] at (0,1) {$1$};
    \foreach \x/\y in {-3/0,-2/1,2/1,3/0}
      \node[circle,fill=black,inner sep=0.75pt] at (\x,\y) {};
  \end{scope}
  \node[font=\small] at (8.55,-0.90) {(b) $e_{\s}'$};
\end{tikzpicture}
\caption{The function $e_{\s}$ and its derivative.}
\label{fig:identity-extension}
\end{figure}
\begin{proof}[Proof of Lemma~\ref{lem:identity-extensions}]
Lemma~\ref{lem:gate-transformations} gives $e_{\s},e_{\bm{\nu}}\in C^\infty(\R)$.
For example, $e_{\s}'(t)=p'(t+3)-p'(t-2)$ and
$e_{\s}''(t)=p''(t+3)-p''(t-2)$.
The same lemma gives $0\le e_{\s}'\le1$ and $|e_{\s}''|\le2$, since
$p'$ is nondecreasing with values in $[0,1]$ and $0\le p''\le2$.
When $|t|\le2$, the first term defining $e_{\s}$ is linear and the
second vanishes, so $e_{\s}(t)=(t+3-1/2)-5/2=t$.
For $t\le-3$, both terms vanish and $e_{\s}(t)=-5/2$; for $t\ge3$,
both are linear and $e_{\s}(t)=5/2$.
Monotonicity then gives $|e_{\s}|\le5/2$.

The same argument, with shifts $22$ and $21$, proves all the
stated properties of $e_{\bm{\nu}}$.
We finish the proof.
\end{proof}

\section{Technical Details of \FOAM{}}
\label{app:foam-details}

% This appendix gives a feasible realization of \FOAM{} and proves
% Lemma~\ref{prop:foam-interface}. It also constructs the
% base-stage state used by the geometric initialization in
% Section~\ref{subsec:warm-start}. The stability estimate and the
% analysis of the geometric initialization are proved in
% Sections~\ref{subsec:upper-stability} and~\ref{subsec:warm-start},
% respectively. Throughout, $r=(2\ell,r_{\y})$, and $P_{\z,r_{\y}}$
% and $\mathcal{L}_{\z,r_{\y}}$ are the conjugate objective and
% Lyapunov function used in the main text.

This appendix presents an implementation of \FOAM{} using
feasible oracle queries and proves Lemma~\ref{prop:foam-interface}.
It also constructs the state at the first regularization level
used in the geometric initialization.
The stability estimate is proved in
Section~\ref{subsec:upper-stability}, and the geometric
initialization is analyzed in Section~\ref{subsec:warm-start}.
Throughout, $r=(2\ell,r_{\y})$, and $P_{\z,r_{\y}}$ and
$\mathcal{L}_{\z,r_{\y}}$ denote the conjugate objective and
Lyapunov function defined in the main text.

\subsection{Feasible implementation and contraction of \FOAM{}}
\label{app:foam-implementation}

Fix a proximal center $\z\in\cX$ and a dual regularization
parameter $0<r_{\y}\le\ell/8$, and define
\[
  \widehat F_{\z}(\x;\y)
  :=F_r(\x,\z;\y)-\frac{\ell}{2}\norm*{\x}^2
       +\frac{r_{\y}}{2}\norm*{\y}^2.
\]
The function $\widehat F_{\z}$ is convex in $\x$, concave in $\y$,
and independent of $r_{\y}$.

Given $(\omegavec_g,\y_g)\in\R^m\times\R^n$, a relative-prox
output is a tuple
$(\x_f,\y_f^+,\omegavec_f^+,\w_f^+)\in
\cX\times\cY\times\R^m\times\R^n$ satisfying the conditions below.
Define
\begin{align*}
  \bm\Delta_x
  &:=\omegavec_f^+
      +\frac{\ell}{2}(\x_f-\ell^{-1}\omegavec_g),\qquad\mbox{and}\\
  \bm\Delta_y
  &:=\w_f^++r_{\y}\y_f^+
      +\frac{\ell}{8}(\y_f^+-\y_g).
\end{align*}
We require
\begin{align}
  &\omegavec_f^+-\nabla_{\x}F_r(\x_f,\z;\y_f^+)+\ell\x_f
   \in N_{\cX}(\x_f),
  \label{eq:app-rprox-normal-x}\\
  &\w_f^++\nabla_{\y}F_r(\x_f,\z;\y_f^+)+r_{\y}\y_f^+
   \in N_{\cY}(\y_f^+),\qquad\mbox{and}
  \label{eq:app-rprox-normal-y}\\
  &\frac{8}{\ell}\norm*{\bm\Delta_x}^2
   +\frac{8}{\ell}\norm*{\bm\Delta_y}^2
   \le\frac{\ell}{8}\norm*{\x_f+\ell^{-1}\omegavec_g}^2
      +\frac{\ell}{8}\norm*{\y_f^+-\y_g}^2.
  \label{eq:app-rprox-certificate}
\end{align}
These are the relative-prox conditions in
\citet[Condition~(18)]{kovalev2022first}, with
$\mu_x=\ell$, $\mu_y=r_{\y}$, $\theta_y=8/\ell$, and
indicator regularizers $\iota_{\cX}$ and $\iota_{\cY}$.

For the \FOAM{} update, set
$\alpha_{r_{\y}}:=\sqrt{8r_{\y}/\ell}$,
$\eta_\omega:=\ell/2$, and
$\eta_y:=4/(\alpha_{r_{\y}}\ell)$.
Algorithm~\ref{alg:app-foam-step} maps the current state $S$
to the next state $S^+$.

\begin{algorithm}[H]
\small
\caption{One \FOAM{} step at $(\z,r_{\y})$}
\label{alg:app-foam-step}
\KwData{$S=(\omegavec,\y,\omegavec_f,\y_f)$}
Set $(\omegavec_g,\y_g)
=\alpha_{r_{\y}}(\omegavec,\y)
 +(1-\alpha_{r_{\y}})(\omegavec_f,\y_f)$\;
Compute $(\x_f,\y_f^+,\omegavec_f^+,\w_f^+)$ satisfying
\eqref{eq:app-rprox-normal-x}-\eqref{eq:app-rprox-certificate}\;
Set $\omegavec^+
=\omegavec+\eta_\omega\ell^{-1}(\omegavec_f^+-\omegavec)
 -\eta_\omega(\x_f+\ell^{-1}\omegavec_f^+)$\;
Set $\y^+
=\y+\eta_y r_{\y}(\y_f^+-\y)
 -\eta_y(\w_f^++r_{\y}\y_f^+)$\;
\KwRet{$S^+=(\omegavec^+,\y^+,\omegavec_f^+,\y_f^+)$}\;
\end{algorithm}

For $\rho\in(0,1)$, define
\[
  K_\rho:=\left\lceil
    \frac{2}{\alpha_{r_{\y}}}\log\frac{1}{\rho}
  \right\rceil.
\]
We write $\FOAM_{\z,r_{\y}}(S;\rho)$ for the state obtained
after $K_\rho$ applications of Algorithm~\ref{alg:app-foam-step}.

To compute the relative-prox output using only feasible queries,
we use the projected iterations in Algorithm~\ref{alg:app-feasible-rprox}.
Set $\gamma:=8/\ell$ and define
\[
  \mathcal{C}':=\gamma^{-1/2}(\cX\times\cY).
\]
For $\bm u=\gamma^{-1/2}(\x,\y)\in\mathcal{C}'$, let
\[
  A_g(\bm u):=\sqrt{\gamma}
  \begin{pmatrix}
    \nabla_{\x}\widehat F_{\z}(\x;\y)
    +\frac{\ell}{2}(\x-\ell^{-1}\omegavec_g)\\[1mm]
    -\nabla_{\y}\widehat F_{\z}(\x;\y)
    +r_{\y}\y+\gamma^{-1}(\y-\y_g)
  \end{pmatrix}.
\]

\begin{algorithm}[h]
\small
\caption{Projected implementation of the relative-prox step}
\label{alg:app-feasible-rprox}
\KwData{$\z$, $0<r_{\y}\le\ell/8$, and
$(\omegavec_g,\y_g)\in\R^m\times\R^n$}
Set $\bm u^{-1}
=\gamma^{-1/2}(-\ell^{-1}\omegavec_g,\y_g)$
and $\bm u^0=\proj_{\mathcal{C}'}(\bm u^{-1})$\;
Set $M_0=32$, $\eta=M_0^{-2}$, and $s=0$\;
\Repeat{$\norm{A_g(\bm u^s)+\bm b^s}
          \le\norm{\bm u^s-\bm u^{-1}}$}{
  Set $s=s+1$\;
  Set $\bm u^s
  =\proj_{\mathcal{C}'}(\bm u^{s-1}-\eta A_g(\bm u^{s-1}))$\;
  Set $\bm b^s
  =\eta^{-1}(\bm u^{s-1}-\bm u^s)-A_g(\bm u^{s-1})$\;
}
Set $(\x_f,\y_f^+)=\sqrt{\gamma}\bm u^s$
and $(\bm b_x,\bm b_y)=\gamma^{-1/2}\bm b^s$\;
Set $\omegavec_f^+
=\nabla_{\x}\widehat F_{\z}(\x_f;\y_f^+)+\bm b_x$
and $\w_f^+
=-\nabla_{\y}\widehat F_{\z}(\x_f;\y_f^+)+\bm b_y$\;
\KwRet{$(\x_f,\y_f^+,\omegavec_f^+,\w_f^+)$}\;
\end{algorithm}

\begin{lemma}[Feasible implementation of the relative-prox step]
\label{lem:app-feasible-rprox}
Algorithm~\ref{alg:app-feasible-rprox} terminates after a universal
constant number of iterations and oracle calls, evaluates gradients
only in $\cX\times\cY$, and returns a tuple satisfying
\eqref{eq:app-rprox-normal-x}-\eqref{eq:app-rprox-certificate}.
\end{lemma}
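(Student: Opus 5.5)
The plan is to view Algorithm~\ref{alg:app-feasible-rprox} as a projected forward method for a strongly monotone, Lipschitz variational inequality over $\mathcal C'$. I would prove three things: the stopping test is met after a universal number of steps; the projection residual $\bm b^s$ is a normal vector, which yields \eqref{eq:app-rprox-normal-x}--\eqref{eq:app-rprox-normal-y}; and the stopping test, rewritten in unscaled variables, is exactly \eqref{eq:app-rprox-certificate}. Feasibility of queries is immediate: $A_g$ is only evaluated at $\bm u^{s}\in\mathcal C'$, and $\sqrt\gamma\,\mathcal C'=\cX\times\cY$. The final gradient at $(\x_f,\y_f^+)$ is already available from evaluating $A_g(\bm u^s)$ in the stopping test.

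\textbf{Structure of $A_g$.} Since $\widehat F_{\z}$ is convex--concave, the $\x$-block of $A_g$ in $(\x,\y)$ coordinates contains the monotone part $\nabla_{\x}\widehat F_{\z}$ plus the term $\frac{\ell}{2}\x$. The $\y$-block contains $-\nabla_{\y}\widehat F_{\z}$ plus $(r_{\y}+\gamma^{-1})\y$, with $\gamma^{-1}=\ell/8$. Passing to $\bm u=\gamma^{-1/2}(\x,\y)$ multiplies the moduli by $\gamma=8/\ell$. Hence $A_g$ is $\mu$-strongly monotone on $\mathcal C'$ with $\mu\ge\min\{4,1\}=1$.

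For the Lipschitz constant, $\nabla\widehat F_{\z}$ is $O(\ell)$-Lipschitz: $f$ is $\ell$-smooth, the proximal and $-\frac{\ell}{2}\norm{\x}^2$ terms add at most $2\ell$, and the $+\frac{r_{\y}}2\norm{\y}^2$ term adds at most $\ell/8$. After the same scaling by $\gamma$, $A_g$ is $L$-Lipschitz with a numerical $L\le M_0=32$; I would check the constant explicitly. Let $\bm u^\star\in\mathcal C'$ be the unique VI solution. Because $\eta=M_0^{-2}\le\mu/L^2$, the standard estimate for projected steps gives
\[
\norm{\bm u^s-\bm u^\star}\le q^s\norm{\bm u^0-\bm u^\star},
\qquad q:=\sqrt{1-\eta\mu}<1.
\]

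\textbf{Termination.} Let $D:=\norm{\bm u^{-1}-\bm u^\star}$. Nonexpansiveness of the projection and $\bm u^\star\in\mathcal C'$ give $\norm{\bm u^0-\bm u^\star}\le D$. By definition of $\bm b^s$,
\[
A_g(\bm u^s)+\bm b^s=A_g(\bm u^s)-A_g(\bm u^{s-1})+\eta^{-1}(\bm u^{s-1}-\bm u^s),
\]
so its norm is at most $(L+\eta^{-1})\norm{\bm u^{s-1}-\bm u^s}\le2(L+\eta^{-1})q^{s-1}D$. On the other side, $\norm{\bm u^s-\bm u^{-1}}\ge D-\norm{\bm u^s-\bm u^\star}\ge(1-q^s)D$. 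The test therefore holds as soon as $2(L+\eta^{-1})q^{s-1}\le 1-q^s$, which happens after a universal number of iterations.

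The degenerate case $D=0$ must be handled separately, since then the test reads $\norm{A_g(\bm u^s)+\bm b^s}\le0$. In this case $\bm u^{-1}=\bm u^\star$ is feasible and the iteration is stationary at $\bm u^\star$. So the left side is $0$ and the algorithm stops at $s=1$. I expect this contraction-and-counting step to be the main obstacle, mainly in pinning down that $L\le M_0$ with the stated constants.

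\textbf{Optimality conditions and certificate.} The projection characterization gives $\bm b^s=\eta^{-1}\bigl(\bm u^{s-1}-\eta A_g(\bm u^{s-1})-\bm u^s\bigr)\in N_{\mathcal C'}(\bm u^s)$. Hence $(\bm b_x,\bm b_y)\in N_{\cX}(\x_f)\times N_{\cY}(\y_f^+)$, since normal cones of a product scale by positive factors.

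To get \eqref{eq:app-rprox-normal-x}, use $\nabla_{\x}\widehat F_{\z}=\nabla_{\x}F_r-\ell\x$; then $\omegavec_f^+-\nabla_{\x}F_r+\ell\x_f=\bm b_x$. To get \eqref{eq:app-rprox-normal-y}, use $\nabla_{\y}\widehat F_{\z}=\nabla_{\y}F_r+r_{\y}\y$; then $\w_f^++\nabla_{\y}F_r+r_{\y}\y_f^+=\bm b_y$.

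Substituting the definitions shows that $\bm\Delta_x$ and $\bm\Delta_y$ are the blocks of $\gamma^{-1/2}\bigl(A_g(\bm u^s)+\bm b^s\bigr)$; this uses $\gamma^{-1}=\ell/8$ in the $\y$-block. Consequently,
\[
\tfrac{8}{\ell}\bigl(\norm{\bm\Delta_x}^2+\norm{\bm\Delta_y}^2\bigr)=\norm{A_g(\bm u^s)+\bm b^s}^2 .
\]
Similarly, the right side of \eqref{eq:app-rprox-certificate} equals $\norm{\bm u^s-\bm u^{-1}}^2$. The stopping rule is therefore exactly \eqref{eq:app-rprox-certificate}.
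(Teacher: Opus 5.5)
Your proposal is correct and follows essentially the same route as the paper. Both arguments use $1$-strong monotonicity and an $M_0$-Lipschitz bound for $A_g$ after the $\gamma$-rescaling, contraction of the projected forward map, and the two estimates $\|A_g(\bm u^s)+\bm b^s\|\le(L+\eta^{-1})(1+q)q^{s-1}D$ and $\|\bm u^s-\bm u^{-1}\|\ge(1-q^s)D$ for termination. Both then finish with projection optimality and the blockwise rescaling identities to obtain the relative-prox conditions.

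Two minor remarks. First, the $r_{\y}$-terms cancel in $\widehat F_{\z}=f+\ell\|\x-\z\|^2-\frac{\ell}{2}\|\x\|^2$, so $\nabla\widehat F_{\z}$ is $2\ell$-Lipschitz and $L\le 16+4\le M_0$; your looser count also stays below $32$. Second, the case $D=0$ needs no separate treatment, since both of your bounds scale with $D$.
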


\begin{proof}[Proof of Lemma~\ref{lem:app-feasible-rprox}]
The saddle operator of $\widehat F_{\z}$ is monotone.
The affine terms in $A_g$ contribute strong monotonicity moduli
\[
  \frac{\gamma\ell}{2}=4,
  \qquad
  \gamma(r_{\y}+\gamma^{-1})=1+\frac{8r_{\y}}{\ell},
\]
so $A_g$ is $1$-strongly monotone.
Since $f$ is jointly $\ell$-smooth, $\nabla\widehat F_{\z}$
is $2\ell$-Lipschitz. After scaling, the saddle operator has
Lipschitz constant at most $16$, while the added affine terms
have Lipschitz constant at most $4$. Thus $M_0=32$ is a valid
Lipschitz bound for $A_g$.

Projection nonexpansiveness and these bounds show that
$\bm u\mapsto\proj_{\mathcal{C}'}(\bm u-\eta A_g(\bm u))$
is a contraction with factor
$\chi:=\sqrt{1-M_0^{-2}}<1$.
Its unique fixed point $\bm u^\star$ satisfies
$0\in A_g(\bm u^\star)+N_{\mathcal{C}'}(\bm u^\star)$.
Let $d_{\rm vi}:=\norm{\bm u^{-1}-\bm u^\star}$.
Since $\bm u^0=\proj_{\mathcal{C}'}(\bm u^{-1})$, we have
\[
  \norm*{\bm u^s-\bm u^\star}\le\chi^s d_{\rm vi},
  \qquad s\ge0.
\]
Consequently, for $s\ge1$,
\[
\begin{array}{@{}l@{\;}l@{}}
  \displaystyle \norm*{A_g(\bm u^s)+\bm b^s}
  &\displaystyle {}\le(M_0+M_0^2)
    \norm*{\bm u^s-\bm u^{s-1}}\\
  &\displaystyle {}\le(M_0+M_0^2)(1+\chi)
    \chi^{s-1}d_{\rm vi},\qquad\mbox{and}\\[2pt]
  \displaystyle \norm*{\bm u^s-\bm u^{-1}}
  &\displaystyle {}\ge(1-\chi^s)d_{\rm vi}.
\end{array}
\]
Since $\chi<1$ is a numerical constant, the stopping condition
holds after a universal constant number of iterations.
Every iterate $\bm u^s$, $s\ge0$, lies in $\mathcal{C}'$,
so all gradient evaluations occur in $\cX\times\cY$.
Each iteration and the final output require only a constant number
of first-order saddle-oracle calls.

Projection optimality gives
$\bm b^s\in N_{\mathcal{C}'}(\bm u^s)$.
The output therefore satisfies
\begin{align*}
  &\omegavec_f^+-\nabla_{\x}F_r(\x_f,\z;\y_f^+)+\ell\x_f
  =\bm b_x\in N_{\cX}(\x_f),\qquad\mbox{and}\\
  &\w_f^++\nabla_{\y}F_r(\x_f,\z;\y_f^+)+r_{\y}\y_f^+
  =\bm b_y\in N_{\cY}(\y_f^+).
\end{align*}
Moreover,
\begin{align*}
  &\norm*{A_g(\bm u^s)+\bm b^s}^2
  =\frac{8}{\ell}\norm*{\bm\Delta_x}^2
    +\frac{8}{\ell}\norm*{\bm\Delta_y}^2,\qquad\mbox{and}\\
  &\norm*{\bm u^s-\bm u^{-1}}^2
  =\frac{\ell}{8}\norm*{\x_f+\ell^{-1}\omegavec_g}^2
    +\frac{\ell}{8}\norm*{\y_f^+-\y_g}^2.
\end{align*}
Thus the stopping condition gives
\eqref{eq:app-rprox-certificate}.
\end{proof}

\begin{proof}[Proof of Lemma~\ref{prop:foam-interface}]
Using $\omegavec_{\z,r_{\y}}^\star=-\ell\x_{\z,r_{\y}}^\star$,
projection nonexpansiveness, and strong convexity of
$P_{\z,r_{\y}}$, we obtain
\begin{align*}
  \ell\norm*{\proj_{\cX}\left(-\frac{\omegavec_f}{\ell}\right)
                 -\x_{\z,r_{\y}}^\star}^2
  &\le\frac{1}{\ell}
       \norm*{\omegavec_f-\omegavec_{\z,r_{\y}}^\star}^2\le2\left(P_{\z,r_{\y}}(\omegavec_f,\y_f)
             -P_{\z,r_{\y}}^\star\right)
   \le\mathcal{L}_{\z,r_{\y}}(S),
\end{align*}
where the last inequality follows from \eqref{eq:tracking-energy}.
This proves \eqref{eq:interface-primal-error}.

With $\mu_x=\ell$, $\mu_y=r_{\y}$, and $\theta_y=8/\ell$,
\citet[Appendix~C, Lemma~7]{kovalev2022first} give
\[
  \mathcal{L}_{\z,r_{\y}}(S^+)
  \le\left(1-\frac{\alpha_{r_{\y}}}{2}\right)
       \mathcal{L}_{\z,r_{\y}}(S).
\]
The proof of this recursion does not require the slow and fast
pairs to coincide at initialization. It applies to any state
with finite Lyapunov value when the relative-prox output satisfies
\eqref{eq:app-rprox-normal-x}-\eqref{eq:app-rprox-certificate}.
Applying it for $K_\rho$ steps gives
\[
  \mathcal{L}_{\z,r_{\y}}\left(\FOAM_{\z,r_{\y}}(S;\rho)\right)
  \le\left(1-\frac{\alpha_{r_{\y}}}{2}\right)^{K_\rho}
       \mathcal{L}_{\z,r_{\y}}(S)
  \le\rho\,\mathcal{L}_{\z,r_{\y}}(S).
\]
By Lemma~\ref{lem:app-feasible-rprox}, each step uses a universal
constant number of first-order saddle-oracle calls. Hence the
total oracle cost is
\[
  \cO(K_\rho)
  =\cO\left(1+\sqrt{\frac{\ell}{r_{\y}}}\log\frac{1}{\rho}\right).
\]
For $\rho\in\{1/8,1/400\}$, as used in \TrackedFOAM{}, this
reduces to $\cO(\sqrt{\ell/r_{\y}})$.
\end{proof}
\subsection{Proof of Lemma~\ref{lem:startup-state}}
\label{app:base-initialization}

\begin{proof}[Proof of Lemma~\ref{lem:startup-state}]  Let $\z^0=\mathbf{0}$ and $r_{\y}^0=\ell/8$.
Apply Algorithm~\ref{alg:app-feasible-rprox} at
$(\z^0,r_{\y}^0)$ with coupling pair
$(\omegavec_g,\y_g)=(\mathbf{0},\mathbf{0})$.
Denote the returned tuple by $(\x_f,\y_f,\omegavec_f,\w_f)$ and set
\[
  \widehat S^0:=(\omegavec_f,\y_f,\omegavec_f,\y_f).
\]
By Lemma~\ref{lem:app-feasible-rprox}, this construction uses a
universal constant number of first-order saddle-oracle calls.

Write $\bm u_f=(\omegavec_f,\y_f)$,
$\bm u^0=(\omegavec_g,\y_g)=(\mathbf{0},\mathbf{0})$, and
$\mathbf H^0:=\operatorname{diag}(\ell^{-1}\bI_m,r_{\y}^0\bI_n)$.
The normal relations in the relative-prox certificate and
\citet[Lemma~1]{kovalev2022first} give
\[
  \bm s:=(\x_f+\ell^{-1}\omegavec_f,\w_f+r_{\y}^0\y_f)
  \in\partial P_{\z^0,r_{\y}^0}(\bm u_f).
\]
Let $\bm e:=\bm s+\mathbf H^0(\bm u_f-\bm u^0)$.
Its two blocks satisfy $\bm e_\omega=2\bm\Delta_x/\ell$ and
$\bm e_y=\bm\Delta_y$. Using $r_{\y}^0=\ell/8$ in
\eqref{eq:app-rprox-certificate} and completing the square gives
\begin{align*}
  \norm*{\bm u_f-\bm u^0}_{\mathbf H^0}^2
  -\norm*{\bm e}_{(\mathbf H^0)^{-1}}^2
  &\ge\frac{1}{2\ell}
       \norm*{\frac{\ell}{2}(\x_f+\ell^{-1}\omegavec_g)
                  -2\bm\Delta_x}^2
       +\frac{3}{\ell}\norm*{\bm\Delta_x}^2
   \ge0.
\end{align*}
Hence
\[
  \norm*{\bm e}_{(\mathbf H^0)^{-1}}
  \le\norm*{\bm u_f-\bm u^0}_{\mathbf H^0}.
\]
Let $\bm u^\star$ minimize $P_{\z^0,r_{\y}^0}$ and define
\[
  d_0:=\norm*{\bm u^0-\bm u^\star}_{\mathbf H^0},
  \qquad
  d_f:=\norm*{\bm u_f-\bm u^\star}_{\mathbf H^0},
  \qquad\mbox{and}\qquad
  d_{0f}:=\norm*{\bm u_f-\bm u^0}_{\mathbf H^0}.
\]
Strong monotonicity, together with the residual inequality above,
gives
\[
  d_0^2\ge3d_f^2-2d_fd_{0f}+d_{0f}^2.
\]
Minimizing the right-hand side separately over $d_{0f}$ and over $d_f$
yields
\[
  d_f^2\le\frac12d_0^2,
  \qquad\mbox{and}\qquad
  d_{0f}^2\le\frac32d_0^2.
\]
Moreover,
$\norm{\bm s}_{(\mathbf H^0)^{-1}}
\le\norm{\bm e}_{(\mathbf H^0)^{-1}}+d_{0f}
\le2d_{0f}$, so strong convexity gives
\[
  P_{\z^0,r_{\y}^0}(\bm u_f)-P_{\z^0,r_{\y}^0}^\star
  \le2d_fd_{0f}\le\sqrt{3}\,d_0^2.
\]
Since $\alpha_{r_{\y}^0}=1$ and the slow and fast pairs of
$\widehat S^0$ both equal $\bm u_f$,
\[
  \mathcal{L}_{\z^0,r_{\y}^0}(\widehat S^0)
  \le(1+2\sqrt{3})d_0^2\le5d_0^2.
\]
Finally, set $r^0=(2\ell,r_{\y}^0)$.
The proximal optimality inequality gives
\[
  \Phi_{r^0}(\x_{\z^0,r_{\y}^0}^\star)
  +\ell\norm*{\x_{\z^0,r_{\y}^0}^\star-\z^0}^2
  \le\Phi_{r^0}(\z^0).
\]
Using $\omegavec_{\z^0,r_{\y}^0}^\star
=-\ell\x_{\z^0,r_{\y}^0}^\star$, $\bm u^0=\mathbf{0}$,
$\z^0=\mathbf{0}$, and \eqref{eq:value-bias}, we obtain
\begin{align*}
  d_0^2 =\ell\norm*{\z^0-\x_{\z^0,r_{\y}^0}^\star}^2
       +r_{\y}^0\norm*{\y_{\z^0,r_{\y}^0}^\star}^2 \le\Phi(\z^0)-\inf\Phi+\frac32r_{\y}^0D_{\cY}^2.
\end{align*}
Combining the last two bounds proves the lemma.
\end{proof}

\end{document}